\title[Invariant manifolds for parabolic equations]
      {Invariant manifolds for parabolic equations under perturbation of the domain}
\author[P. Sa Ngiamsunthorn]{Parinya Sa Ngiamsunthorn}
\address{School of Mathematics and Statistics\\
  University of Sydney, NSW 2006  Australia}
\email{pasa4391@uni.sydney.edu.au}
\subjclass[2010]{Primary 37L05; Secondary 35K58, 35B20}
\keywords{domain perturbation, invariant manifolds, upper and lower semicontinuity, semilinear parabolic equations, Mosco convergence} 
\theoremstyle{plain}
\newtheorem{theorem}{Theorem}[section]
\newtheorem{corollary}[theorem]{Corollary}
\newtheorem{lemma}[theorem]{Lemma}
\theoremstyle{definition}
\newtheorem{definition}[theorem]{Definition}
\theoremstyle{remark}
\newtheorem{remark}[theorem]{Remark}
\newtheorem{assumption}[theorem]{Assumption}
\DeclareMathOperator{\graph}{graph}
\DeclareMathOperator{\real}{Re}
\begin{document}
\date{August 29, 2011}

\begin{abstract}
We study the effect of domain perturbation on invariant manifolds for  
semilinear parabolic equations subject to Dirichlet boundary 
condition. Under Mosco convergence assumption on the domains, 
we prove the upper and lower semicontinuity of 
both the local unstable invariant manifold and the local stable invariant manifold
near a hyperbolic equilibrium.  
The continuity results are obtained by keeping track of the construction of invariant manifolds 
in P. W. Bates and C. K. R. T. Jones [Dynam. Report. Ser. Dynam. Systems Appl. Vol. 2, 1--38, 1989].
\end{abstract}
\maketitle

\section{Introduction}
The study of invariant manifolds is an important tool to understand the behaviour of a dynamical system
near an equilibrium point. In this paper, we are interested in dynamical systems arising from
semilinear parabolic equations.
Let $\Omega$ be a bounded open set in $\mathbb R^N$, $N \geq 2$. We consider
the parabolic equation of the form
\begin{equation}
\label{eq:paraInvMan}
  \left\{ 
  \begin{aligned}
    \frac{\partial u}{\partial t} + \mathcal A u  &=g(x,u) &&\quad \text { in } \Omega \times (0,\infty)\\
    u &= 0  &&\quad \text{ on } \partial \Omega \times (0,\infty)\\
     u(\cdot,0) &= u_0 &&\quad \text{ in } \Omega , \\
  \end{aligned} 
  \right . 
\end{equation}
where $g$ is a function in $C(\mathbb R^N \times \mathbb R)$ 
and $\mathcal A$ is an elliptic operator. 
Our aim is to study how dynamics of the parabolic equation \eqref{eq:paraInvMan} changes when
we vary the domain $\Omega$. In particular, we wish to establish the continuity 
of invariant manifolds with respect to the domain.   
We will consider a sequence of uniformly bounded domains $\Omega_n$ in $\mathbb R^N$ as a perturbation
of $\Omega$. The perturbation of \eqref{eq:paraInvMan} is given by
\begin{equation}
\label{eq:paraInvManN}
  \left\{ 
  \begin{aligned}
    \frac{\partial u}{\partial t} + \mathcal A_n u  &=g_n(x,u) &&\quad \text { in } \Omega_n \times (0,\infty)\\
    u &= 0  &&\quad \text{ on } \partial \Omega_n \times (0,\infty)\\
     u(\cdot,0) &= u_{0,n} &&\quad \text{ in } \Omega_n. \\
  \end{aligned} 
  \right . 
\end{equation}
%
We impose conditions on the nonlinearities $g_n$ and $g$ so that the corresponding abstract parabolic 
equations 
\begin{equation}
 \label{eq:absSemiLinInvarN}
 \begin{aligned}
 \left \{
 \begin{aligned}
   \dot{u}(t) + A_n u(t) &= f_n(u(t)) \quad t \in (0, \infty)\\
         u(0) &= u_{0,n}, \\ 
 \end{aligned}
 \right .
 \end{aligned}
\end{equation}
where $f_n(u)(x):= g_n(x,u(x))$ and
\begin{equation}
 \label{eq:absSemiLinInvar}
 \begin{aligned}
 \left \{
 \begin{aligned}
   \dot{u}(t) + A u(t) &= f(u(t)) \quad t \in (0, \infty)\\
         u(0) &= u_{0}, \\ 
 \end{aligned}
 \right .
 \end{aligned}
\end{equation}
where $f(u)(x):= g(x,u(x))$  
are well-posed in $L^2(\Omega_n)$ and $L^2(\Omega)$, respectively.
In addition, we assume that $f_n(u)$ and $f(u)$ are higher order terms, that is, we will
consider \eqref{eq:absSemiLinInvarN} and \eqref{eq:absSemiLinInvar} as the linearised systems 
near an equilibrium (see Assumption \ref{assump:nonlinearFInvar}).

In this work, we focus on \emph{singular} perturbations of the domain, e.g. its topology
changes, so that it is not possible in general to apply a change of variables (coordinate
transform) to change the perturbed equation into an equivalent problem over the same
spatial domain $\Omega$.  This means that our class of domain perturbations cannot be reduced 
to a classical perturbation for the coefficients. Common examples include a sequence of 
dumbbell shape domains with shrinking handle and a sequence of domains with cracks.
One of the main difficulties to establish the persistence result under domain perturbation
is that the solutions of parabolic equations belong to different spaces, namely, 
$L^2(\Omega_n)$ and consequently the dynamical systems (semiflows) induced by these parabolic 
equations act on different spaces.  

It is well-known from the theory of dynamical systems that hyperbolicity of an equilibrium
is the main concept for persistence under small perturbations. We show in this paper that
this principle is also valid for singular domain perturbation.
Our main result states that under a suitable rather general class of domain perturbation, 
if the unperturbed system \eqref{eq:absSemiLinInvar}
has a local stable 
and a local unstable invariant manifolds 
in a neighbourhood of an equilibrium and the equilibrium is hyperbolic, then
the perturbed system \eqref{eq:absSemiLinInvarN} also has a local stable 
and a local unstable invariant manifolds for $n$ sufficiently large. 
Moreover, we have the continuity (upper and lower semicontinuity) of these invariant manifolds with respect to the domain
(see Theorem \ref{th:mainResults} and Theorem \ref{th:mainResults2}). This result is new.

There are similar results on the effect of domain variation on the dynamics of parabolic equations. 
In \cite{MR1834117}, upper semicontinuity of attractors is obtained for
reaction-diffusion equations with Neumann boundary condition when the domain $\Omega \subset \mathbb R^M
\times \mathbb R^N$ is squeezed in the $\mathbb R^N$-direction. Arrieta and Carvalho \cite{MR2041515}   
consider a similar problem on a sequence of bounded and Lipschitz perturbed
domains $\Omega_n$. They give necessary and sufficient conditions on
domains for spectral convergence of the corresponding elliptic problem and obtain continuity (upper and lower
semicontinuity) of local unstable manifolds and consequently continuity of attractors. 
For results under Dirichlet boundary condition, we refer to \cite{MR2130211}
where upper and lower semicontinuity of attractors are obtained 
for the heat equation under a certain perturbation of the domain in $\mathbb R^N$ with $N \leq 4$.

The class of domain perturbations considered in this paper (Assumption 
\ref{assum:bddPertMosco}) is much more general than that in \cite{MR2130211}.
Many examples where this more general domain convergence is useful appear in \cite{MR1955096}
(as well as many other references).
These have been used in constructing many examples of domains where the time independent 
problem is much more complicated than when $\Omega$ is a ball. 
For this general class of domain perturbations, we also have prior knowledge of the 
convergence of eigenvalues and eigenfunctions of the
corresponding elliptic operators. The main focus here is to investigate the dependence of
domains in the construction of invariant manifolds. 
In \cite{MR2041515}, continuity of local unstable invariant manifolds is proved by keeping
track of the construction adapted from Henry \cite{MR610244}.
Although our framework on semilinear parabolic equations fits into \cite{MR610244}, we will 
use different techniques. Indeed, we apply the existence results for invariant manifolds in Bates and Jones \cite{MR1000974} to 
prove the continuity of invariant manifolds under domain perturbation.     
The construction of invariant manifolds in \cite{MR1000974} follows Hadamard style \cite{MR1508998}
which involves using the splitting between various subspaces to estimate projections of the 
flow in the different directions. 
The technique involves more geometrical than functional-analytic arguments.
By using this construction, we give
continuity results for both the local stable and the local unstable invariant manifolds
under domain perturbation rather than focus only on the local unstable invariant manifolds 
(and consequently attractors) as in \cite{MR2130211, MR1834117, MR2041515}.

An outline of this paper is as follows. 
In Section \ref{sec:frameworkMain}, we state our framework and the main results on the 
continuity (upper and lower semicontinuity) of the local stable and the local unstable
invariant manifolds under perturbation of the domain. 
In Section \ref{sec:existInvPert}, we obtain the existence of local invariant manifolds for 
the perturbed problems following the construction from \cite{MR1000974}.
In Section \ref{sec:technicalLemma},
we give some technical lemmas and a characterisation of upper and lower
semicontinuity. The proof of the continuity results is given in Section \ref{sec:convUnstaMan}
for the local unstable invariant manifolds and in Section \ref{sec:convStaMan} for the local 
stable invariant manifolds.

\section{Framework and main results}
\label{sec:frameworkMain}

Let $\Omega_n$ be a sequence of bounded open sets in $\mathbb R^N$, $N \geq 2$
and $\Omega$ be a bounded open set in $\mathbb R^N$ such that 
there exists a ball $D \subset \mathbb R^N$ with $\Omega_n, \Omega \subset D$ 
for all $n \in \mathbb N$. 
We consider the
perturbed semilinear parabolic equation \eqref{eq:paraInvManN} 
where $\mathcal A_n$ is an elliptic operator of the form
\begin{equation}
\label{eq:calAopN}
 \mathcal A_n u :=  -\partial_i[a_{ij,n}(x) \partial_j u + a_{i,n}(x) u]
                     + b_{i,n}(x) \partial_i u  + c_{0,n}(x) u .
\end{equation} 
In the above, we use summation convention with $i,j$
running from $1$ to $N$. Also, we assume  $a_{ij,n}, a_{i,n}, b_{i,n}, c_{0,n}$ are
functions in 
$L^{\infty}(D)$ and that there exists a constant $\alpha_0 > 0$
independent of $x \in D$ and $n \in \mathbb N$ such that
\begin{equation}
\label{eq:ellipticity}
a_{ij,n}(x) \xi_i \xi_j \geq \alpha_0 |\xi|^2,
\end{equation} 
for all $\xi \in \mathbb R^N$ and for all $n \in \mathbb N$.
The elliptic operator $\mathcal A$ for the unperturbed equation \eqref{eq:paraInvMan} is  
defined similarly to \eqref{eq:calAopN} (with $n$ deleted) and $a_{ij}$ satisfies the
ellipticity condition \eqref{eq:ellipticity} with the same constant $\alpha_0$.
We assume that the coefficients of the operator $\mathcal A_n$
converge to the corresponding coefficients of $\mathcal A$ as stated below.
\begin{assumption}
\label{assum:An}
Assume that
    $\lim_{n\rightarrow \infty}
    a_{ij,n}=  a_{ij}$, $\lim_{n\rightarrow \infty} a_{i,n} = a_{i},
    \lim_{n\rightarrow \infty} b_{i,n} = b_{i}$ and
    $\lim_{n\rightarrow \infty}c_{0,n} = c_0$ in $L^{\infty}(D)$
    for all $i,j = 1, \ldots, N$. 
\end{assumption}

By Riesz representation theorem, we identify $L^2(\Omega_n)$ with its dual
and consider the \emph{evolution triple}  
$H^1_0(\Omega_n) \overset{d}{\hookrightarrow} L^2(\Omega_n) \overset{d}{\hookrightarrow} H^{-1}(\Omega_n)$.
We denote by $\langle \cdot, \cdot \rangle$  the duality pair between 
$H^{-1}(\Omega_n)$ and $H^1_0(\Omega_n)$.
The notation $( \cdot \; | \; \cdot )_{L^2(\Omega_n)}$ denotes
the inner product on $L^2(\Omega_n)$.  
Define a form $a_n(\cdot,\cdot)$ associated with $\mathcal A_n$ on $H^1_0(\Omega_n)$ by
\begin{equation}
 \label{eq:bilinearForm}
  a_n(u,v) :=  \int_{\Omega_n}[ a_{ij,n}(x) \partial_j u+ a_{i,n}(x)u] \partial_i v
                + b_{i,n}(x) \partial_i u v + c_{0.n}(x) uv dx,
\end{equation}
for $u,v \in H^1_0(\Omega_n)$. It is easy to see that $a_n(\cdot,\cdot)$ is
a continuous bilinear form. We define $a(\cdot,  \cdot)$ on $H^1_0(\Omega)$ similarly.
Let
\begin{equation}
 \label{eq:constLambda0}
  \lambda_{\mathcal A} := \sup_{n \in \mathbb N} \Big \{ \|c_{0,n}^{-}\|_{\infty} + \frac{1}{2 \alpha_0} \sum_{i=1}^N
  \|a_{i,n} + b_{i,n} \|_{\infty} \Big \},
\end{equation}
where $c_{0,n}^{-}:= \max(-c_{0.n}, 0)$ is the negative part of $c_{0,n}$. We set 
$\lambda_0 := \lambda_{\mathcal A} + \alpha_0 / 2$.
It can be verified that 
\begin{equation}
 \label{eq:ellipticCond}
  a_n(u,u) + \lambda \|u\|^2_{L^2(\Omega_n)} \geq \frac{\alpha_0}{2}\|u\|^2_{H^1_0(\Omega_n)},
\end{equation}
for all $u \in H^1_0(\Omega_n)$, for all $\lambda \geq \lambda_0$ and for all 
$n \in \mathbb N$. Similar inequality holds for $a(\cdot, \cdot)$ with the same constants.
By the Lax--Milgram theorem, there exists $A_{\Omega_n} \in \mathscr L
(H^1_0(\Omega_n), H^{-1}(\Omega_n))$ such that 
\begin{equation}
 \label{eq:operatorAonV}
  a_n(u,v) = \langle A_{\Omega_n} u, v \rangle,
\end{equation}
for all $u,v \in H^1_0(\Omega_n)$. 
We may consider $A_{\Omega_n}$ as an operator on $H^{-1}(\Omega_n)$ with the
domain $H^1_0(\Omega_n)$. Similarly, we obtain the operator
$A_{\Omega} \in \mathscr L(H^1_0(\Omega), H^{-1}(\Omega))$. 
Let $A_n$ and $A$ be the \emph{maximal restriction} of the operators $A_{\Omega_n}$
and $A_{\Omega}$ on $L^2(\Omega_n)$ and $L^2(\Omega)$, respectively. 
It is well-known that $-A_n$ generates a strongly continuous analytic semigroup $S_n(t), t
\geq 0$ on $L^2(\Omega_n)$
(see \cite[Proposition 3, XVII \S 6]{MR1156075}). Similarly, we denote by $S(t), t
\geq 0$ the semigroup on $L^2(\Omega_n)$ generated by $-A$. 
We shall consider the perturbation \eqref{eq:paraInvManN} of \eqref{eq:paraInvMan} 
in the abstract form \eqref{eq:absSemiLinInvarN} and 
\eqref{eq:absSemiLinInvar} in $L^2(\Omega_n)$ and $L^2(\Omega)$, respectively. 


To deal with domain perturbation where the solutions belong to different function spaces, 
we usually consider the trivial extension, that is, the 
extension by zero on $D \backslash \Omega$. 
In abuse of notation, we often write 
$u \in L^2(D)$ for the trivial extension of a function $u \in L^2(\Omega)$.
On the other hand, we write $u \in L^2(\Omega)$ for a function $u \in L^2(D)$ to represent its 
restriction to $\Omega$. 
In particular, when we write $u_n \rightarrow u$ in $L^2(D)$ for $u_n \in L^2(\Omega_n)$
we mean the trivial extensions converge in $L^2(D)$. 
The notation $u_n |_{\Omega}$ where $u_n \in L^2(\Omega_n)$ means that
$u_n$ is first extended by zero on $D \backslash \Omega_n$ and then restricted to $\Omega$.
A similar interpretation applies to the notation $u|_{\Omega_n}$ when $u \in L^2(\Omega)$. 
We will use this convention throughout the paper without further comment.

We assume that a sequence of domains $\Omega_n$ converges to $\Omega$
in the following sense. 
\begin{assumption}
\label{assum:bddPertMosco}
We assume the following two conditions hold:
\begin{itemize}
 \item[\upshape{(M1)}]
  For every $\phi \in H^1_0(\Omega)$, there exists $\phi_n$ in $H^1_0(\Omega_n)$
  such that $\phi_n \rightarrow \phi$ in $H^1(D)$. 
 \item[\upshape{(M2)}]
  If $(n_k)$ is a sequence of indices converging to $\infty$, $(\phi_{n_k})$
  is a sequence with $\phi_{n_k} \in H^1_0(\Omega_{n_k})$ and $\phi_{n_k} \rightharpoonup \phi$
  in $H^1(D)$ weakly, then the weak limit $u$ belongs to $H^1_0(\Omega)$.
\end{itemize}
\end{assumption} 
Note that here we regard $H^1_0(\Omega_n)$ and $H^1_0(\Omega)$ as closed subspaces of $H^1(D)$
using the trivial extension. It is often said that
$H^1_0(\Omega_n)$ converges to $H^1_0(\Omega)$ in the sense of Mosco when (M1) and (M2) hold, 
but we will simply say that $\Omega_n$ converges to $\Omega$ in sense of Mosco. 
We refer to \cite{MR0298508} for a general Mosco convergence of closed convex sets. 
Examples of domains satisfying (M1) and (M2) can be found in \cite{MR1955096}.   

The Mosco convergence assumption  is 
naturally used in domain perturbation. As characterised in \cite{MR1955096}, it is a
necessary and sufficient condition for strong convergence and uniform convergence of the 
resolvent operators under domain perturbation. It is also sufficient for the convergence of 
solutions of initial value problems for parabolic equations (see \cite[Section 6]{MR2119988}).

We make the following assumption on the nonlinearities.
\begin{assumption}
\label{assump:nonlinearFInvar} 
We assume that  
\begin{itemize}
\item[\upshape{(i)}]
$f: L^2(\Omega) \rightarrow
L^2(\Omega)$ is locally Lipschitz and $f(0) = 0$.  Moreover, for every
$\varepsilon > 0$ there exists a neighbourhood $U =U(\varepsilon)$ of $0$ such that
$f$ has a Lipschitz constant $\varepsilon$ in $U$.

\item[\upshape{(ii)}]  $f_n: L^2(\Omega_n) \rightarrow  L^2(\Omega_n)$ 
    is locally Lipschitz and $f_n(0) = 0$. In addition, for every
   $\varepsilon > 0$ there exists a neighbourhood $U_n =U_n(\varepsilon)$ of $0$ such that
   $f_n$ has a Lipschitz constant $\varepsilon$ in $U_n$. Moreover,  
   $U_n$ can be chosen uniformly with respect to $n \in \mathbb N$ in
   the sense that we can take $U_n$ to be a ball centered at $0$ in $L^2(\Omega_n)$
   of the same radius for all $n \in \mathbb N$.
   
\item[\upshape{(iii)}]  $f_n(u|_{\Omega_n}) \rightarrow f (u|_{\Omega})$ in $L^2(D)$ uniformly
   with respect to $u \in B_{L^2(D)}(0,r)$ for all $r > 0$.
\end{itemize}
\end{assumption}

\begin{remark}
\label{rem:conditionOnSubOp}
(i) Assumption \ref{assump:nonlinearFInvar} (i) means that $f(u)$ is a higher order term
and we could think of \eqref{eq:absSemiLinInvar} as a linearised problem near an equilibrium.

(ii)  A necessary and sufficient condition for the substitution operator $f$ to be in
$C(L^p(\mathbb R^N),L^q(\mathbb R^N))$ is that
 there exist  $c>0$ and $\psi \in L^q(\mathbb R^N)$ such that
    $|g(x,\xi)| \leq \psi(x) + c|\xi|^{p/q}$
for all $x \in \mathbb R^N$ and $\xi \in \mathbb R$ (see \cite{MR1066204}). 
Hence, Assumption \ref{assump:nonlinearFInvar} (i.e. $p = q = 2$) means that we
require a linear growth with respect to $u$ in the nonlinear term $g(x,u)$.

(iii) The Lipschitz continuity of $f$ is for instance satisfied if 
there exists an essentially bounded function $\phi$ such that
    $|g(x,\xi_1) - g(x, \xi_2)| \leq \phi(x, R) |\xi_1-\xi_2|$
for all $|\xi_1|, |\xi_2| \leq R$ (see \cite[Theorem 3.10]{MR1066204}).

(iv) The condition $f(0) = 0$ holds if $g(x, 0) = 0$ for almost all $x \in \Omega$.
\end{remark}

By our assumptions on $A_n$ and $f_n$, the abstract equation \eqref{eq:absSemiLinInvarN} has 
a unique mild solution $u_n \in C([0, t_n^{+}(u_{0,n})), L^2(\Omega_n))$ for any given initial 
condition $u_{0,n} \in L^2(\Omega_n)$ (see \cite{MR710486} or \cite[Theorem 3.8]{MR2119988}). 
Here, we write $t_n^{+}(u_{0,n})$ for the \emph{maximal existence time} or 
\emph{positive escape time}. Moreover, the mild solution $u_n$ of \eqref{eq:absSemiLinInvarN}
can be represented by the \emph{variation of constants} formula
\begin{equation}
\label{eq:varOfConst}
  u_n(t) = S_n(t)u_{0,n} + \int_0^t S_n(t - \tau) f_n(u_n(\tau)) d\tau,
\end{equation}
for $t \in [0,  t_n^{+}(u_{0,n}))$.
Since $g_n$ is linearly bounded with respect to the second variable 
(Remark \ref{rem:conditionOnSubOp} (ii)), we have that $t_n^{+}(u_{0,n}) = \infty$ for 
all $u_{0,n} \in L^2(\Omega_n)$, that is, we always have a \emph{global} solution.
Similar consideration implies the existence and uniqueness of mild solution $u$ of  
\eqref{eq:absSemiLinInvar}.   

To study the abstract parabolic equation as a dynamical system, we consider
a \emph{semiflow} $\Phi_{t,n} : L^2(\Omega_n) \rightarrow L^2(\Omega_n)$
defined by
\begin{equation}
 \label{eq:semiflow}
 \Phi_{t,n} (u_{0,n}) := u_n(t),
\end{equation}
for all $t \in [0,t_n^{+}(u_{0,n}))$ where $u_n$ is the maximal solution of 
\eqref{eq:absSemiLinInvarN}. 
Sometimes we would like to study the backwards behaviour of solutions. We call a
continuous curve $u_n: [-t,0] \rightarrow L^2(\Omega_n)$ for some $t > 0$ a \emph{backwards
solution branch} for $u_{0,n} \in L^2(\Omega_n)$  if
  $\Phi_{s,n}(u_n(-s)) = u_{0,n}$
for all $s \in [0,t]$. We write $\Phi_{-s,n} (u_{0,n}) = u_n(-s)$ when we look at
a particular backwards solution branch. We defined the semiflow 
$\Phi_t : L^2(\Omega) \rightarrow L^2(\Omega)$ induced by
solutions of \eqref{eq:absSemiLinInvar} similarly. 

Under the assumptions considered above, it is proved in 
\cite[Theorems 1.1 (i), 1.2 (i) ]{MR1000974} that the unperturbed problem
\eqref{eq:absSemiLinInvar} has a local stable invariant manifold $W^s$ and
a local unstable invariant manifold $W^u$ inside a suitable neighbourhood $U$ of $0$
(see Section \ref{subsec:constructInv}).
In this paper, we study the persistence of these local invariant manifolds
 under domain perturbation when the equilibrium $0 \in L^2(\Omega)$ of 
\eqref{eq:absSemiLinInvar} is \emph{hyperbolic}, that is, the spectrum $\sigma(-A)$ of $-A$
does not contain $\lambda$ with $\real{\lambda} = 0$. 

The main results of this paper can be stated as follows.
\begin{theorem}[Continuity of local unstable manifolds]
\label{th:mainResults}
Suppose that Assumption \ref{assum:An}, \ref{assum:bddPertMosco} and \ref{assump:nonlinearFInvar}
are satisfied. If the equilibrium $0$ of \eqref{eq:absSemiLinInvar} is hyperbolic,
then \eqref{eq:absSemiLinInvarN} has a local unstable invariant manifold $W_n^{u}$ for
$n$ sufficiently large such that there exists $\delta > 0$ for which the following
(i) and (ii) hold.  
 \begin{itemize}
  \item[\upshape{(i)}] Upper semicontinuity:
   \begin{displaymath}
      \sup_{v \in W_n^{u} \cap B_n} \inf_{ u \in W^{u} \cap B}
       \| v - u \|_{L^2(D)} \rightarrow 0
      \quad \text{ as } n \rightarrow \infty;
   \end{displaymath}
     
  \item[\upshape{(ii)}] Lower semicontinuity:   
   \begin{displaymath}
     \sup_{u \in W^{u} \cap B} \inf_{ v \in W_n^{u} \cap B_n}
     \| v - u \|_{L^2(D)} \rightarrow 0 
      \quad \text{ as } n \rightarrow \infty,
   \end{displaymath}
 \end{itemize}
  where $B_n := B_{L^2(\Omega_n)}(0,\delta)$ and   $B := B_{L^2(\Omega)}(0,\delta)$.
\end{theorem}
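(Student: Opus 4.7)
The plan is to exploit the graph representation of $W^u$ and $W_n^u$ coming from the Bates--Jones construction (applied to the perturbed systems in Section \ref{sec:existInvPert}), combined with three key ingredients: spectral convergence of $-A_n$ to $-A$ (a consequence of Assumptions \ref{assum:An} and \ref{assum:bddPertMosco}), strong convergence of the semigroups $S_n(t)$ along trivially extended data, and the uniform convergence of the nonlinearities supplied by Assumption \ref{assump:nonlinearFInvar}(iii).

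First, hyperbolicity of $0$ for $-A$ together with the Mosco-type convergence of the resolvents yields, for $n$ large, a spectral splitting $L^2(\Omega_n) = X_n^u \oplus X_n^s$ with $\dim X_n^u = \dim X^u < \infty$, a uniform spectral gap around $\real \lambda = 0$, and spectral projections $P_n^u$ that converge to $P^u$ in $L^2(D)$ under the extension/restriction convention of Section \ref{sec:frameworkMain}. Section \ref{sec:existInvPert} then delivers a local unstable manifold $W_n^u$ realised as the graph of a Lipschitz map $\sigma_n : B_n^u \to X_n^s$ over a ball $B_n^u$ of radius $\delta > 0$ in $X_n^u$, with $\mathrm{Lip}(\sigma_n)$ bounded uniformly in $n$ thanks to the uniform Lipschitz control of $f_n$ near $0$ from Assumption \ref{assump:nonlinearFInvar}(ii).

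For upper semicontinuity I would argue by contradiction via the characterisation recorded in Section \ref{sec:technicalLemma}. A failing subsequence $v_{n_k} \in W_{n_k}^u \cap B_{n_k}$ bounded away from $W^u \cap B$ admits full backward trajectories $u_{n_k}(\cdot)$ in $W_{n_k}^u \cap B_{n_k}$ that decay exponentially as $t \to -\infty$. Using the backward variation-of-constants representation, analyticity of the $S_n(t)$, the uniform $L^2$-bound on $f_{n_k}(u_{n_k}(\cdot))$, and the fact that $w_n \to w$ in $L^2(D)$ implies $S_n(t) w_n \to S(t) w$ (which is a consequence of Mosco convergence), a diagonal extraction produces a limit curve $u:(-\infty,0]\to L^2(D)$ supported in $\Omega$ that satisfies the limiting variation-of-constants formula thanks to Assumption \ref{assump:nonlinearFInvar}(iii). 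The limit curve stays in $B$ and decays backwards, so $u(0) \in W^u \cap B$, contradicting the separation.

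For lower semicontinuity, given $u = u^u + \sigma(u^u) \in W^u \cap B$, I would take as a candidate $v_n := P_n^u(u|_{\Omega_n}) + \sigma_n\bigl(P_n^u(u|_{\Omega_n})\bigr) \in W_n^u$. Since $P_n^u(u|_{\Omega_n}) \to P^u u$ in $L^2(D)$, the convergence $v_n \to u$ reduces to uniform convergence $\sigma_n \to \sigma$ on balls of the unstable subspace, interpreted in $L^2(D)$. I expect this to be the main obstacle. In the Hadamard-style construction, each $\sigma_n$ is the unique fixed point of a graph-transform operator $\mathcal T_n$ that is a contraction with a constant $<1$ uniform in $n$ (from the uniform spectral gap and the smallness of $\mathrm{Lip}(f_n)$ near $0$). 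What must be shown is that $\mathcal T_n \to \mathcal T$ uniformly on the relevant class of admissible Lipschitz graphs when everything is read in $L^2(D)$; this will follow from the convergences of $S_n$, $P_n^u$, and $f_n$ already in hand, and a perturbed-fixed-point argument then yields $\sigma_n \to \sigma$ uniformly, hence lower semicontinuity. Propagating these convergences through the Bates--Jones construction while respecting the moving function spaces $L^2(\Omega_n)$ is precisely where the technical lemmas of Section \ref{sec:technicalLemma} will be needed.
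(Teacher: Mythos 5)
Your framework is the right one: the Hadamard-style graph representation $W_n^u = \graph(h_n^+)$ with uniformly $\nu^{-1}$-Lipschitz maps, spectral convergence of $-A_n$, and convergence of the semiflows. Two remarks on the execution.

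For upper semicontinuity you propose extracting a limit backward trajectory by a diagonal argument over the variation-of-constants formula. This is a genuinely different route from the paper, which instead writes $\xi_{n_k}=w_{n_k}\oplus h_{n_k}^+(w_{n_k})$ and exploits finite-dimensionality: since $\dim X_n^+ = \dim X^+ < \infty$ with a converging basis (Theorem \ref{th:basisXnPlus}), one can extract $w_{n_k}\to w\in X^+$ in $L^2(D)$ (Corollary \ref{cor:convBoundSeqInXplus}), and then pointwise convergence of the graph maps plus uniform Lipschitz continuity carries $h_{n_k}^+(w_{n_k})\to h^+(w)$. Your dynamical route would additionally require a uniform-in-$n$ analytic smoothing estimate for $S_n(t)$ to obtain compactness of $\{u_{n_k}(t)\}_k$ in $L^2(D)$ at each $t<0$; this is plausible under the uniform ellipticity but is not free, and the paper's argument avoids it entirely.

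For lower semicontinuity there is a real gap. You flag the main obstacle correctly — one needs $\sigma_n\to\sigma$ in a suitable sense — but then assert that "$\mathcal T_n\to\mathcal T$ uniformly on the relevant class of admissible Lipschitz graphs... will follow from the convergences of $S_n$, $P_n^u$, and $f_n$ already in hand", feeding a perturbed-fixed-point argument. This is exactly what does not follow. The operators $T_{t,n}$ act on $Y_n$ with the Lip-norm built from $\|\cdot\|_{X_n^-}$ and $\|\cdot\|_{X_n^+}$, which are suprema of $e^{-\alpha t}\|S_n^-(t)\cdot\|$ and $e^{-\beta t}\|S_n^+(t)\cdot\|$ over \emph{non-compact} time intervals, while $S_n(t)\to S(t)$ is only strong and only uniform on compact subsets of $(0,\infty)$; the paper explicitly records this obstruction in the remark at the end of Section \ref{subsec:existInvPert}. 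There is no common Lip-metric in which to compare $T_{t,n}$ on $Y_n$ with $T_t$ on $Y$, and the naive estimate $\|\sigma_n-\sigma\|\le\|T_n\sigma-T\sigma\|/(1-K)$ has no meaning. The paper's actual route is: fix the uniform contraction constant $K$, construct by hand a sequence $h_n\in Y_n$ that converges pointwise (in $L^2(D)$) to a scaled function $h=h^+/C$ (Lemma \ref{lem:hNtoH}, with the scaling $C$ needed so that $h_n$ is $\nu^{-1}$-Lipschitz after composition with the projections), prove that $T_{t,n}^{m}(h_n)$ converges pointwise to $T_t^m(h)$ by induction through the semiflow convergence, and then use the fixed-point rate estimate to pass to pointwise convergence $h_n^+\to h^+$ (Theorem \ref{th:ptwiseConvGlobUnstMan}). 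Lower semicontinuity is then obtained from this pointwise convergence together with the uniform Lipschitz constant and compactness of $\overline{P^+(W^u\cap B)}$ via a finite cover. Without some version of this pointwise/compactness bookkeeping, the perturbed-fixed-point step as you describe it would not close.
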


A similar result can be stated for local stable invariant manifolds with an additional 
assumption of the convergence in measure of the domains. We denote by $|\Omega|$
the Lebesgue measure of $\Omega$.
\begin{theorem}[Continuity of local stable manifolds]
\label{th:mainResults2}
Suppose that Assumption \ref{assum:An}, \ref{assum:bddPertMosco} and \ref{assump:nonlinearFInvar}
are satisfied.  
In addition, assume that $|\Omega_n| \rightarrow |\Omega|$ 
as $n \rightarrow \infty$.
If the equilibrium $0$ of \eqref{eq:absSemiLinInvar} is hyperbolic,
then \eqref{eq:absSemiLinInvarN} has a local stable invariant manifold $W_n^{s}$ for
$n$ sufficiently large such that there exists $\delta > 0$ for which the following
(i) and (ii) hold.  
 \begin{itemize}
  \item[\upshape{(i)}] Upper semicontinuity:
   \begin{displaymath}
      \sup_{v \in W_n^{s} \cap B_n} \inf_{ u \in W^{s} \cap B}
       \| v - u \|_{L^2(D)} \rightarrow 0
       \quad \text{ as } n \rightarrow \infty;
   \end{displaymath}
  \item[\upshape{(ii)}] Lower semicontinuity:   
   \begin{displaymath}
     \sup_{u \in W^{s} \cap B} \inf_{ v \in W_n^{s} \cap B_n}
      \| v - u \|_{L^2(D)} \rightarrow 0  
      \quad \text{ as } n \rightarrow \infty,
   \end{displaymath}
 \end{itemize}
  where $B_n := B_{L^2(\Omega_n)}(0,\delta)$ and   $B := B_{L^2(\Omega)}(0,\delta)$.
\end{theorem}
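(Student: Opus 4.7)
The plan is to follow the same strategy as for Theorem~\ref{th:mainResults}, reducing the continuity of $W_n^s$ to a convergence statement for the graph representations produced by the Bates--Jones construction. Mosco convergence of $\Omega_n$ to $\Omega$ together with Assumption~\ref{assum:An} yields spectral convergence of $-A_n$ to $-A$, so hyperbolicity of $0$ persists for $n$ sufficiently large and one obtains finite-rank unstable spectral projections $P_n^u \to P^u$ strongly in $L^2(D)$. The complementary stable projections $P_n^s := I - P_n^u$ give a splitting $L^2(\Omega_n) = X_n^u \oplus X_n^s$ for which $S_n(t)$ admits an exponential dichotomy with constants uniform in $n$, and the Bates--Jones construction from Section~\ref{sec:existInvPert} produces $W_n^s$ as the graph of a Lipschitz map $h_n^s \colon X_n^s \cap B_n^s \to X_n^u$ with Lipschitz constant and domain of definition uniform in $n$; similarly $W^s$ is the graph of $h^s$.

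The key technical step is to show that if $w_n \in X_n^s$, $w \in X^s$ and $w_n \to w$ in $L^2(D)$, then $h_n^s(w_n) \to h^s(w)$ in $L^2(D)$. Since $h_n^s$ is the unique fixed point of a contraction built from $S_n(t)$, the projections $P_n^u, P_n^s$, and the nonlinearity $f_n$ acting on a space of forward orbits decaying at a controlled exponential rate, this convergence follows from a uniform contraction argument combined with (a) the convergence $S_n(t) u_n \to S(t) u$ in $L^2(D)$ whenever $u_n \to u$, uniform for $t$ in bounded intervals (a consequence of Mosco convergence, cf.\ Section~6 of~\cite{MR2119988}), (b) strong convergence of the spectral projections in $L^2(D)$, and (c) the uniform convergence of the nonlinearities in Assumption~\ref{assump:nonlinearFInvar}~(iii). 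This is exactly where the hypothesis $|\Omega_n| \to |\Omega|$ enters: combined with Mosco convergence it implies $|\Omega \triangle \Omega_n| \to 0$, so that for any $u \in L^2(\Omega)$ the truncation $u|_{\Omega_n}$ converges to $u$ in $L^2(D)$, which is indispensable when approximating elements of the infinite-dimensional stable subspace $X^s$ by elements of $X_n^s$.

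Granted this uniform convergence of the graph functions, both semicontinuity statements will follow. For lower semicontinuity, given $u = P^s u + h^s(P^s u) \in W^s \cap B$, I set $v_n := P_n^s(u|_{\Omega_n}) + h_n^s(P_n^s(u|_{\Omega_n}))$; then $v_n \in W_n^s$ and $v_n \to u$ in $L^2(D)$ by the convergences above. For upper semicontinuity, I pick $v_n \in W_n^s \cap B_n$, extract a weakly convergent subsequence $v_n \rightharpoonup v$ in $L^2(D)$, decompose $v_n = P_n^s v_n + h_n^s(P_n^s v_n)$, and pass to the limit to identify $v = P^s v + h^s(P^s v) \in W^s$; the weak convergence is then upgraded to strong convergence using the measure assumption and norm control via the trivial extension. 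The main obstacle is this very upgrade step: because $X^s$ is infinite-dimensional, Mosco convergence alone does not deliver strong $L^2(D)$ convergence of elements of $X_n^s$ approximating a given element of $X^s$, nor of the sequence $v_n$ itself. Overcoming this requires both the measure hypothesis and a careful comparison of the Bates--Jones fixed-point operators acting on function spaces based on the varying domains $\Omega_n$; this comparison is the content of Section~\ref{sec:convStaMan}.
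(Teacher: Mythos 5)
Your high-level plan (reduce to graph functions, handle the infinite-dimensional stable direction by weak compactness, use the measure assumption to control $P_n^s \to P^s$) matches the spirit of the paper, but the load-bearing claim — that $h_n^s(w_n)\to h^s(w)$ in $L^2(D)$ whenever $w_n\to w$, derived ``from a uniform contraction argument'' parallel to the unstable case — is exactly the step that \emph{cannot} be carried out as you describe, and the paper does not do it that way. In Bates--Jones the unstable graph $h^+$ is a fixed point of a contraction $T_t$ on the Lipschitz space $Y=\mathrm{Lip}(X^+,X^-)$, and the convergence proof in Theorem~\ref{th:ptwiseConvGlobUnstMan} works precisely because $X^+$ (the domain) is finite-dimensional. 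The stable graph $h^-$ is \emph{not} produced as a contraction fixed point in this construction; Theorem~\ref{th:globalStaMan} obtains $h^-(v_0)$ as the unique intersection $G_\infty = \bigcap_{t\ge 0} G_t$ defined via cone conditions. More fundamentally, even if you set up a Lyapunov--Perron contraction on spaces of decaying forward orbits, the relevant Lip-norm would involve the renormed $\|\cdot\|_{X^-}$, which is a supremum of $e^{-\alpha t}\|S_n^-(t)\cdot\|$ over the non-compact interval $[0,\infty)$; under Mosco convergence the semigroups $S_n(t)$ converge only uniformly on compact subsets of $(0,\infty)$, so these sup-norms do not pass to the limit. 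The paper flags exactly this obstruction in the remark at the end of Section~\ref{subsec:existInvPert}. Your proposal defers the crucial step to ``a careful comparison of the Bates--Jones fixed-point operators,'' which is an acknowledgement that the gap is there rather than a closure of it.

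What the paper actually does is avoid any pointwise convergence theorem for $h_n^-$ and instead work directly with a subsequence argument. Taking $\xi_{n_k}=v_{n_k}\oplus h_{n_k}^-(v_{n_k})\in W^s_{n_k}\cap B_{n_k}$, one extracts $v_{n_k}\rightharpoonup v$ weakly (using $|\Omega_n|\to|\Omega|$ to show the weak limit is supported in $\Omega$, and $P_n^-\to P^-$ in norm, which also requires the measure assumption, to show $v\in X^-$). The finite-dimensionality of $X^+$ via Corollary~\ref{cor:convBoundSeqInXplus} gives \emph{strong} convergence of the $X^+$-components $h_{n_k}^-(v_{n_k})\to w$. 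The limit $w$ is then identified with $h^-(v)$ not by any continuity of the graph functions but by propagating the cone inequality $\|P_n^+\Phi_{t,n}(\cdot)\|\le\mu\|P_n^-\Phi_{t,n}(\cdot)\|$ to the limit using the semiflow convergence of Theorem~\ref{th:convSolSemiLinParaDir}, then invoking positive invariance of the cone $K_{\mu_0}$ to rule out $\|w\|_{X^+}>\mu_0\|v\|_{X^-}$. This is a genuinely geometric argument tied to the cone formulation, and it is why the paper deliberately sets up two nested modifications $\mu$ and $\mu_0$ at the start of Section~\ref{sec:convStaMan}. You also omit the final technical ingredient, Lemma~\ref{lem:seqOfGphSmallerBall}, which is needed because the subsequence limit only lands in $\overline{B}$; without it you cannot conclude that the approximating points lie in the open ball $B$ as the semicontinuity statement requires.
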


\section{Existence of invariant manifolds for the perturbed equations}
\label{sec:existInvPert}

In this section, we obtain the existence of local unstable and local stable invariant
manifolds for the perturbed equation \eqref{eq:absSemiLinInvarN} stated in Theorem 
\ref{th:mainResults} and  Theorem \ref{th:mainResults2} using the construction 
from \cite{MR1000974}. For the sake of mathematical necessity, we first give a sketch
of proof of the existence of invariant manifolds proved in \cite{MR1000974}
for the unperturbed equation \eqref{eq:absSemiLinInvar}. We then keep track of
this construction to obtain invariant manifolds for the perturbed equations. 
   
\subsection{The construction of invariant manifolds}
\label{subsec:constructInv}

\begin{definition}
\label{def:staUnstaInvManifold}
Let $U$ be a neighbourhood of $0$. We define
\begin{displaymath}
 \begin{aligned}
  W^{s}&= \{ u \in U : \Phi_t(u) \in U \text{ for all } t
        \geq 0 \text { and } \Phi_t(u) \rightarrow 0 \text{ exponentially as }
         t \rightarrow \infty \} \\
 W^{u} &= \{u \in U : \text{ some backwards branch } \Phi_t(u) \text{
   exists for all } t < 0 \text{ and lies in } U, \\
   &\quad \quad \text{ and }  \Phi_t(u) \rightarrow 0 \text{ exponentially as }  t \rightarrow -\infty \}
 \end{aligned}
\end{displaymath}
\end{definition}
These sets $W^{s}$ and $W^{u}$ are invariant relative to $U$ and are
called stable and unstable sets, respectively. Under the assumptions in Section 
\ref{sec:frameworkMain}, 
it is proved in \cite{MR1000974} that $W^{s}$ and $W^{u}$ are indeed invariant manifolds
for the unperturbed problem \eqref{eq:absSemiLinInvar}.
We sometimes write $W^s(U)$ and $W^u(U)$ to indicate their dependence on the 
neighbourhood $U$. 

Recall from Section \ref{sec:frameworkMain} that $-A$ is a generator of an analytic 
$C_0$-semigroup $S(t), t \geq 0$ on $L^2(\Omega)$. 
We decompose the spectrum $\sigma(-A)$ as
\begin{displaymath}
  \sigma(-A) = \sigma^{s} \cup \sigma^c \cup \sigma^u
\end{displaymath}  
where
\begin{equation}
\label{eq:sigmaSCU}
 \begin{aligned}
  \sigma^s &= \{ \lambda \in \sigma(-A) : \mathrm{Re}(\lambda) < 0 \}\\
  \sigma^c &= \{ \lambda \in \sigma(-A) : \mathrm{Re}(\lambda) = 0 \}\\
  \sigma^u &= \{ \lambda \in \sigma(-A) : \mathrm{Re}(\lambda) > 0 \}.
 \end{aligned}
\end{equation}
Since $\Omega$ is bounded, Rellich's theorem implies that the embedding
$H^1_0(\Omega) \hookrightarrow L^2(\Omega)$ is compact. Hence, the
resolvent $(\lambda + A)^{-1} : L^2(\Omega) \rightarrow L^2(\Omega)$ is
also compact when it is defined. This implies that $\sigma(-A)$ consists
of eigenvalues with finite multiplicities (see \cite{MR0407617}). 
It is easily seen from \cite[Theorem 3, XVII \S 6]{MR1156075} 
that $\sigma^c$ and $\sigma^u$ are finite sets. 
Let $\Gamma^c$ and $\Gamma^u$ be rectifiable closed curves 
separating $\sigma^c$ and $\sigma^s$ respectively from the remaining
spectrum. There are invariant subspaces of $L^2(\Omega)$
associated to $\sigma^s, \sigma^c$ and $\sigma^u$ via the spectral
projections (see \cite{MR0407617})
\begin{equation}
\label{eq:spectralProjs}
 P^c  = \frac{1}{2\pi i} \int_{\Gamma^c} (\lambda + A)^{-1} d\lambda
 \quad \text{ and } \quad 
 P^u  = \frac{1}{2\pi i} \int_{\Gamma^u} (\lambda + A)^{-1} d\lambda.
\end{equation}
Indeed, we decompose $L^2(\Omega) = X^s \oplus X^c \oplus X^u$ where
$X^s = (1-P^c -P^u) L^2(\Omega)$, $X^c = P^c L^2(\Omega)$ and $X^u = P^u
L^2(\Omega)$. Note that $\dim(X^c)$ and $\dim(X^u)$ are finite.
We  set $X^{cs} =X^c \oplus X^s$ and $X^{cu} = X^c \oplus
X^u$. For $* = s,c,u,cs,cu$, we have that $-A^{*} = -A|_{X^{*}}$ is a
generator of $S^{*}(t) = S(t)|_{X^{*}}$.  Since $S(t)$ is an analytic semigroup,
there exist $M > 0$ and $\sigma > 0$ such that
 $\|S^{s}(t)\| \leq M e^{-\sigma t}$
for all $t > 0$.

To obtain the existence of local stable and unstable invariant manifolds, we
decompose $L^2(\Omega) = X^{-} \oplus X^{+}$ with 
$\dim{X^{+} }< \infty$ in two different ways; 
either $X^{-} =X^{s}$ and  $X^{+} = X^{cu}$, or 
$X^{-} =X^{cs}$ and  $X^{+} = X^{u}$. 
We denote a natural projection (via spectral projection) onto $X^{+}$ by $P^{+}$,
a natural projection on $X^{-}$ by $P^{-} := 1-P^{+}$ and write $-A^{\pm} = -A|_{X^{\pm}}$.  
In both cases, we have that    
 $-A^{-}$ generates a $C_0$-semigroup $S^{-}(t)$ on $X^{-}$ satisfying
         \begin{equation}
          \label{eq:alphaSminus}
          \|S^{-}(t)\| \leq M_1 e^{\alpha t},
         \end{equation}
         for all $t \geq 0$ where $M_1 > 0$ and $\alpha \in \mathbb R$.
 Similarly, $-A^{+}$ generates a $C_0$-group $S^{+}(t)$ on $X^{+}$ satisfying
         \begin{equation}
          \label{eq:betaSplus}
          \|S^{+}(t)\| \leq M_2 e^{\beta t},
         \end{equation}
         for all $t \leq 0$ where $M_2 > 0$ and $\beta > \alpha$.
The parameters $\alpha$ and $\beta$ can be chosen as follows 
(see proof of Theorem 1.1 case (D) and proof of Theorem 1.2 case (D) in \cite{MR1000974}).
\begin{itemize}
 \item If $X^{-} =X^{s}$ and  $X^{+} = X^{cu}$, we take $\alpha = -\sigma$ and fix 
       $\beta$ such that $-\sigma < \beta < 0$.
 \item If $X^{-} =X^{cs}$ and  $X^{+} = X^{u}$, we take $\beta > 0$ such that
       $\beta < \min\{\mathrm{Re}(\lambda) : \lambda \in \sigma^u \}$ and fix
       $\alpha$ such that $0 < \alpha < \beta$.
\end{itemize}
The main techniques used in \cite{MR1000974} are a renorming of $X^{-}$ and
$X^{+}$ and a modification of nonlinearity $f$. Since we decompose 
$L^2(\Omega) = X^{-} \oplus X^{+}$, norms on $X^{-}$ and $X^{+}$ are
originally inherited from $L^2(\Omega)$. Indeed, if $u = v \oplus w \in
L^2(\Omega)$ where $v \in X^{-}$ and  $w \in X^{+}$, then
\begin{equation}
 \label{eq:equivNormL2}
   \frac{1}{\|P^{-} \| +\|P^{+}\|} (\|v\|_{L^2(\Omega)} + \|w\|_{L^2(\Omega)})
   \leq \|u\|_{L^2(\Omega)} 
   \leq (\|v\|_{L^2(\Omega)} + \|w\|_{L^2(\Omega)}).
\end{equation}   
However, we can renorm $X^{-}$ and $X^{+}$ by
\begin{equation}
\label{eq:normXminusplus}
\begin{aligned}
  \|v\|_{X^{-}} &:= \sup_{t \geq 0} e^{-\alpha t} \|S^{-}(t) v\|_{L^2(\Omega)}
                \quad && \text{ for } v \in X^{-}, \\
  \|w\|_{X^{+}} &:= \sup_{t \leq 0} e^{-\beta t} \|S^{+}(t) w\|_{L^2(\Omega)}
                \quad && \text{ for } w \in X^{+}.
\end{aligned}  
\end{equation}
These norms are equivalent on $X^{-}$ and $X^{+}$, respectively. It is
easy to see that (see also \cite[Lemma 2.1]{MR1000974} )
\begin{equation}
\begin{aligned} 
 \label{eq:equivNormXminusplus}
   &\|v\|_{L^2(\Omega)} \leq \|v\|_{X^{-}} \leq M_1 \|v\|_{L^2(\Omega)}
               &&\quad \text{ for all } v \in X^{-}, \\ 
   &\|w\|_{L^2(\Omega)} \leq \|w\|_{X^{+}} \leq M_2 \|w\|_{L^2(\Omega)}
               &&\quad \text{ for all } w \in X^{+}.
\end{aligned}
\end{equation}
The modification of nonlinearity $f$  is done by cutting off arguments so
that we obtain a globally Lipschitz function $\tilde f$.
Let $\eta > 0$ be arbitrary. By Assumption
\ref{assump:nonlinearFInvar}, we can choose $\delta > 0$ such that $f$ has a
Lipschitz constant less than $\eta/12$ in $B_{L^2(\Omega)}(0, 2\delta)$.
Let $\Psi: L^2(\Omega) \rightarrow \mathbb R$ be a function defined by
\begin{displaymath}
  \Psi(u) = 
            \left \{ 
              \begin{aligned} 
               &1  &&\quad \text{ if } \|u\|_{L^2(\Omega)} \leq \delta\\
               &2 - \frac{\|u\|_{L^2(\Omega)}}{\delta}  &&\quad \text{ if }
                  \delta \leq \|u\|_{L^2(\Omega)} \leq 2 \delta\\
               &0  &&\quad \text{ if } \|u\|_{L^2(\Omega)} \geq 2 \delta.
               \end{aligned}
            \right . 
\end{displaymath}
By setting $\tilde f(u) := \Psi(u) f(u)$ for all $u \in L^2(\Omega)$, we have
that $\tilde f$ is globally Lipschitz continuous with constant $\varepsilon <
\eta/4$. This Lipschitz constant $\varepsilon$ can be chosen as small as
we require by shrinking $\delta$.

With this modified system $\dot{u}(t) + A u(t) = \tilde f(u(t))$, the
solution to an initial value parabolic equation $u(t)$ also exists for $t
\geq 0$, that is, the maximal existence time $t^{+}(u_0) = \infty$ for
all $u_0 \in L^2(\Omega)$. Moreover, the modified system agrees with the
original system \eqref{eq:absSemiLinInvar} inside 
$B_{L^2(\Omega)}(0,\delta)$. Hence, the modification gives us a local
behaviour of the original system. 

In \cite{MR1000974}, invariant manifolds for the modified system are
constructed as follows. We choose the Lipschitz constant $\varepsilon$ of $\tilde f$
so that $\varepsilon < (\beta - \alpha)/4$ and there exists $\gamma$ such
that
\begin{equation}
 \label{eq:constGamma}
  - \beta + 2 \varepsilon < \gamma < - \alpha - 2 \varepsilon.
\end{equation} 
By abuse of notations, we denote again by $\Phi_t(u_0)$ the solution $u(t)$ of the modified system
with the initial condition $u_0$. 
Let 
\begin{displaymath}
 \begin{aligned}
 W^{-} &= \{ u \in L^2(\Omega) : e^{\gamma t} \Phi_{t}(u) \rightarrow 0
          \text{ as } t \rightarrow \infty \} \\
 W^{+} &= \{ u \in L^2(\Omega) : \text{ a backward branch } \Phi_t(u)
           \text{ exists for all } t \leq 0 \\
       &\quad \quad \text{ and } e^{\gamma t} \Phi_{t}(u) \rightarrow 0
           \text{ as } t \rightarrow -\infty \}. \\
\end{aligned}
\end{displaymath}
The main idea to show that $W^{-}$ and $W^{+}$ are invariant manifolds
is that certain cones and moving cones are positively invariant, which
can be determined by the difference in the growth rates on $X^{-}$ and $X^{+}$.
For $\lambda > 0$, we define a cone 
\begin{equation}
\label{eq:conKlambda}
 K_{\lambda} = \{(v,w) \in X^{-} \times X^{+} : \lambda \|v\|_{X^{-}}
                 \leq  \|w\|_{X^{+}} \}.
\end{equation}
It is shown in \cite[Lemma 2.4]{MR1000974} that $K_{\lambda}$ is
positively invariant if $\lambda \in [\mu,\nu]$ where $\mu$ and $\nu$ are
positive parameters  with $\mu < 1 < \nu$ satisfying
\begin{equation}
\label{eq:constMuNu1} 
 \varepsilon < (\beta - \alpha)/(2+\nu+\mu^{-1}).
\end{equation} 
Indeed, $\mu$ and $\nu$ can be further restricted so that
\begin{equation}
\label{eq:constMuNu2}
  \varepsilon(1+ \mu^{-1}) - \beta < \gamma < - \varepsilon(1+\nu) -\alpha.
\end{equation}
The next two theorems give the existence of global stable and global unstable invariant manifolds 
for the modified system. For the sake of mathematical necessity 
(when investigating the dependence on the domains), we sketch the proofs here.
  
\begin{theorem}[{\cite[Theorem 2.1]{MR1000974}}]
 \label{th:globalStaMan}
 There exists a Lipschitz function $h^{-}:X^{-} \rightarrow X^{+}$ such
 that $W^{-} = \graph(h^{-})$ and $h^{-}(0) = 0$.
\end{theorem}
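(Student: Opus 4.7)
The plan is to construct $h^{-}$ as the graph of a Lipschitz function on $X^{-}$ by exploiting the positively invariant cones $K_{\lambda}$ from \cite[Lemma 2.4]{MR1000974} together with a contraction argument to guarantee that an appropriate solution exists over every initial $X^{-}$-component. Throughout, I would write $u = v \oplus w$ with $v = P^{-}u \in X^{-}$ and $w = P^{+}u \in X^{+}$ and use the renormed spaces \eqref{eq:normXminusplus} so that \eqref{eq:alphaSminus}--\eqref{eq:betaSplus} can be applied cleanly.

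First I would prove the graph property. Suppose $u_1, u_2 \in W^{-}$ satisfy $v_1(0) = v_2(0)$ but $w_1(0) \neq w_2(0)$. Then $u_1(0) - u_2(0) = (0, w_1(0)-w_2(0)) \in K_{\nu}$ for every $\nu > 0$, so by the positive invariance of $K_{\nu}$ under the modified flow, the difference $\Phi_{t}(u_1) - \Phi_{t}(u_2)$ remains in $K_{\nu}$ for all $t \geq 0$. The standard Gronwall-type estimate inside the cone, using \eqref{eq:betaSplus} and the Lipschitz constant $\varepsilon$ of $\tilde f$, yields a lower bound
\begin{displaymath}
 \|w_1(t)-w_2(t)\|_{X^{+}} \geq e^{(\beta - \varepsilon(1+\mu^{-1}))t}\|w_1(0)-w_2(0)\|_{X^{+}},
\end{displaymath}
and by \eqref{eq:constMuNu2} the exponent exceeds $-\gamma$. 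This contradicts $e^{\gamma t}w_i(t) \to 0$, forcing $w_1(0) = w_2(0)$. The same argument applied with the cone $K_{\mu}$ (of slope $\mu<1$) shows that for any $u_1, u_2 \in W^{-}$ one must have $(v_1(0)-v_2(0), w_1(0)-w_2(0)) \notin K_{\mu}$; i.e.\ $\|w_1(0)-w_2(0)\|_{X^{+}} \leq \mu\|v_1(0)-v_2(0)\|_{X^{-}}$, which gives the Lipschitz estimate for $h^{-}$ with constant at most $\mu$ on its domain.

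It remains to show the domain of $h^{-}$ is all of $X^{-}$. For each $v_0 \in X^{-}$, I would solve the Lyapunov--Perron integral system
\begin{displaymath}
 v(t) = S^{-}(t) v_0 + \int_0^t S^{-}(t-s) P^{-} \tilde f(v(s)\oplus w(s))\, ds,
 \qquad
 w(t) = -\int_t^{\infty} S^{+}(t-s) P^{+} \tilde f(v(s)\oplus w(s))\, ds,
\end{displaymath}
in the Banach space $Y_{\gamma} = \{u \in C([0,\infty), L^2(\Omega)) : \sup_{t\geq 0} e^{\gamma t}\|u(t)\| < \infty\}$. The bounds \eqref{eq:alphaSminus}--\eqref{eq:betaSplus}, the choice \eqref{eq:constGamma} and the smallness $\varepsilon < (\beta-\alpha)/4$ make the right-hand side a contraction on $Y_{\gamma}$, producing a unique solution $u(\cdot; v_0) \in Y_{\gamma}$ with $P^{-}u(0; v_0) = v_0$. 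Setting $h^{-}(v_0) := P^{+} u(0; v_0)$ gives a globally defined function whose graph lies in $W^{-}$, and the uniqueness/Lipschitz step above then identifies $W^{-}$ with $\graph(h^{-})$. Finally, $\tilde f(0)=0$ forces $u\equiv 0$ to be the fixed point for $v_0=0$, so $h^{-}(0)=0$.

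The main obstacle I expect is the bookkeeping of constants: one must arrange $\varepsilon$, $\mu$, $\nu$ and $\gamma$ simultaneously so that (a) the cone-invariance inequalities \eqref{eq:constMuNu1}--\eqref{eq:constMuNu2} hold, (b) the lower bound inside $K_{\nu}$ strictly dominates the rate $-\gamma$ needed for the contradiction, and (c) the Lyapunov--Perron operator is a strict contraction on $Y_{\gamma}$. Once this gap calculation is set up correctly, the three assertions (graph, Lipschitz, $h^{-}(0)=0$) are essentially packaged by the cone estimates of \cite[Lemma 2.4]{MR1000974}, and the only substantive analytic work is the fixed-point argument in $Y_{\gamma}$.
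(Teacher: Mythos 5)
Your proposal reaches the right conclusion, and your cone-based arguments for the graph property and the Lipschitz bound (using positive invariance of $K_{\lambda}$ and the exponential lower bound on the $X^{+}$-component inside the cone) are essentially the same mechanism that Bates and Jones use. Where you genuinely diverge is in the existence step. The paper (following \cite[Theorem 2.1]{MR1000974}) defines $h^{-}(v_0)$ purely geometrically: it fixes $v_0 \in X^{-}$, considers the ball $B = \{ w_0 \in X^{+} : \|w_0\|_{X^{+}} \leq \mu \|v_0\|_{X^{-}} \}$, sets $G_t = \{w_0 \in B : \|w(t)\|_{X^{+}} \leq \mu \|v(t)\|_{X^{-}}\}$, and shows the nested intersection $G_{\infty} = \bigcap_{t\geq 0}G_t$ is a single point. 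No integral equation ever appears. You instead run a Lyapunov--Perron argument in the weighted space $Y_{\gamma}$, which is the functional-analytic approach this paper explicitly declines to follow (the introduction motivates the choice of \cite{MR1000974} over Henry-style constructions precisely because the latter is Lyapunov--Perron). Your route is legitimate as a standalone proof of the statement, and the constant bookkeeping you flag is the real work either way; but note two things. First, since the rest of the paper's domain-perturbation analysis tracks the $G_{\infty}$ construction (and the contraction $T_t$ on spaces of Lipschitz graphs), substituting Lyapunov--Perron here would break the thread the paper relies on in Sections 4--6. Second, there is a small gap in your last step: a fixed point in $Y_{\gamma} = \{u : \sup_{t\ge 0} e^{\gamma t}\|u(t)\| < \infty\}$ gives boundedness of $e^{\gamma t}\|u(t)\|$, not convergence to $0$, which is what membership in $W^{-}$ requires; you would need either to rerun the contraction with a slightly larger admissible weight $\gamma' > \gamma$ (still satisfying \eqref{eq:constGamma}) or to upgrade boundedness to decay by a short bootstrap from the integral formula.
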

\begin{proof}[Sketch of the proof]
 Fix $v_0 \in X^{-}$ and let
 \begin{displaymath}
 B = \{ w_0 \in X^{+} : \|w_0\|_{X^{+}} \leq \mu \|v_0\|_{X^{-}} \}.
 \end{displaymath}
 We write $\Phi_t(u_0) = u(t)$ as $u(t) = v(t) \oplus w(t)$ where 
 $v(t) \in X^{-}$ and $w(t) \in X^{+}$. Define
 \begin{displaymath}
   G_t = \{w_0 \in B : \|w(t)\|_{X^{+}} \leq \mu \|v(t)\|_{X^{-}} \}.
 \end{displaymath} 
It can be shown that $G_{\infty} := \bigcap_{t \geq 0} G_t$
contains exactly one element. 
A function $h^{-}$ defined by
      $h^{-}(v_0) =  G_{\infty}$
for $v_0 \in X^{-}$ is a Lipschitz function with $h^{-}(0) = 0$ and $\graph(h^{-}) = W^{-}$.
\end{proof}
\begin{theorem}[{\cite[Theorem 2.2]{MR1000974}}]
\label{th:globalUnstaMan}
 There exists a Lipschitz function $h^{+}:X^{+} \rightarrow X^{-}$ such
 that $W^{+} = \graph(h^{+})$ and $h^{+}(0) = 0$.
\end{theorem}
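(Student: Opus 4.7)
My plan is to dualise the sketch of Theorem \ref{th:globalStaMan} by swapping the roles of $X^-$ and $X^+$, reversing the time direction, and replacing the parameter $\mu$ with $\nu^{-1}$. Fix $w_0 \in X^+$ and set
\[
 B = \{ v_0 \in X^- : \nu \|v_0\|_{X^-} \leq \|w_0\|_{X^+} \},
\]
so that every point $(v_0,w_0)$ with $v_0 \in B$ lies in the cone $K_\nu$. The goal is to identify, among all $v_0 \in B$, the unique one for which a backwards solution branch $u(t) = v(t) \oplus w(t)$, $t \leq 0$, with $u(0) = v_0 \oplus w_0$ exists and satisfies the weighted decay $e^{\gamma t} u(t) \to 0$ as $t \to -\infty$; that unique $v_0$ will be declared $h^+(w_0)$.

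Since $-A^+$ generates a $C_0$-group on the finite dimensional space $X^+$ while $-A^-$ generates only a semigroup, I would first realise such backwards branches as fixed points in the weighted Banach space
\[
 Z_\gamma = \Bigl\{ u \in C((-\infty,0];L^2(\Omega)) : \sup_{t \leq 0} e^{\gamma t}\|u(t)\|_{L^2(\Omega)} < \infty \Bigr\}
\]
of the integral operator whose $X^+$ component is the usual group variation-of-constants from $w_0$ and whose $X^-$ component is the improper integral
\[
 v(t) = \int_{-\infty}^{t} S^-(t-\tau) P^- \tilde f(u(\tau))\, d\tau.
\]
Convergence of this improper integral, and the contraction estimate, both rely on combining the exponential bound \eqref{eq:alphaSminus}, the Lipschitz constant $\varepsilon$ of $\tilde f$, and the choice \eqref{eq:constGamma} of $\gamma$ strictly between $-\beta + 2\varepsilon$ and $-\alpha - 2\varepsilon$. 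In parallel with $G_t$ from Theorem \ref{th:globalStaMan}, I would then define
\[
 G_t = \{ v_0 \in B : \text{the branch exists on } [-t,0] \text{ with } \nu\|v(-t)\|_{X^-} \leq \|w(-t)\|_{X^+}\}
\]
and use the positive invariance of $K_\nu$ from \cite[Lemma 2.4]{MR1000974} (read backwards: its complement is negatively invariant, so $G_s \subset G_t$ for $s \geq t$) together with a shrinking-diameter estimate coming from the gap $\beta - \alpha$ to conclude that $G_\infty = \bigcap_{t \geq 0} G_t$ is a singleton, which defines $h^+(w_0)$.

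The remaining claims then fall out quickly: $h^+(0)=0$ by uniqueness together with $\tilde f(0)=0$; the Lipschitz property of $h^+$ follows by applying the cone invariance (with $K_\mu$) to the difference of two backwards branches and using the norm equivalences \eqref{eq:equivNormXminusplus}; and $\graph(h^+) = W^+$ is a direct unwinding of the definitions. The main obstacle I expect is the simultaneous handling of existence, uniqueness in $Z_\gamma$, and convergence of the improper integral for the $X^-$ component: unlike in the stable-manifold proof, the backward semiflow is not available a priori and one must lean entirely on the weighted integral formulation, which is precisely where the two-sided constraint \eqref{eq:constMuNu1}--\eqref{eq:constMuNu2} on the parameters $\mu,\nu,\varepsilon,\gamma$ is used in full.
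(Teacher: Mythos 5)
Your proposal is a genuinely different route from the paper's. The paper proves this theorem by the Hadamard graph transform: it introduces the complete metric space $Y$ of $\nu^{-1}$-Lipschitz maps $h : X^{+} \to X^{-}$ with $h(0)=0$, normed by \eqref{eq:LipsNormY}, and defines $T_t : Y \to Y$ by $\graph(T_t(h)) = \Phi_t(\graph(h))$. The positive invariance of the cones $K_\lambda$, $\lambda\in[\mu,\nu]$, guarantees that $\Phi_t(\graph(h))$ is again the graph of a $\nu^{-1}$-Lipschitz map and that $T_t$ is a contraction for $t$ large, with constant $\nu(\nu-\mu)^{-1}\exp((\alpha-\beta+\varepsilon(2+\mu+\nu^{-1}))t)$; the fixed point is $h^{+}$. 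Note this only ever uses the \emph{forward} semiflow, so it sidesteps all existence issues for backward branches. Your proposal is essentially the Lyapunov--Perron method: for fixed $w_0\in X^{+}$, find the backward branch as the fixed point of the weighted integral operator on $Z_\gamma$; this is a sound and standard alternative and would also prove the theorem given the contraction estimate (which does follow from \eqref{eq:alphaSminus}, \eqref{eq:betaSplus}, and $-\beta+2\varepsilon<\gamma<-\alpha-2\varepsilon$). However, the $G_t$/$G_\infty$ shrinking-cone machinery you graft on — borrowed from the stable-manifold sketch of Theorem~\ref{th:globalStaMan} — is both redundant and circular here: the weighted contraction already yields existence \emph{and} uniqueness of the backward branch, hence $h^{+}(w_0)$ directly, whereas your set $G_t$ is only well defined once you already know which $v_0\in B$ admit backward branches, which is precisely what the Lyapunov--Perron step establishes. (The stable case has no such issue because the forward semiflow exists unconditionally.) So commit to the integral-equation argument and drop the cone-shrinking part. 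One further consequence worth noting: the continuity arguments in Section~\ref{sec:convUnstaMan} are built around the graph transforms $T_t$ and $T_{t,n}$ and their uniform contraction constant \eqref{eq:lipsConstTtN}; if Theorem~\ref{th:globalUnstaMan} is proved by Lyapunov--Perron instead, those downstream estimates would need to be reworked around the weighted integral operators.
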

\begin{proof}[Sketch of the proof]
The proof is based on a standard contraction mapping argument.
Let 
\begin{displaymath}
 Y = \{ h \in C(X^{+}, X^{-}) : h(0) = 0 \text{ and } h \text{ is } \nu^{-1} \text{-Lipschitz } \}.
\end{displaymath}
Then $Y$ is a complete metric space with the norm
\begin{equation}
\label{eq:LipsNormY}
  \|h\|_{\text{Lip}} = \sup_{w \ne 0} \frac{\|h(w)\|_{X^{-}}}{\|w\|_{X^{+}}}.
\end{equation}
For an arbitrary $h \in Y$, it can be shown that 
$P^{+} \Phi_t(\graph(h)) = X^{+}$ and that $\Phi_t(\graph(h))$ is the graph of
a $\nu^{-1}$-Lipschitz function for all $t \geq 0$. Hence, the map
$T_t : Y \rightarrow Y$  for $t \geq 0$ given by
\begin{displaymath}
  T_t(h) = \tilde{h}
\end{displaymath} 
where $\tilde{h} \in Y$ with $\graph(\tilde{h}) =\Phi_t(\graph(h))$ is well-defined.
Furthermore, $T_t$ is a contraction on $Y$ for $t$ sufficiently large. Indeed, 
\begin{displaymath}
\|T_t(h_2) - T_t(h_1) \|_{\text{Lip}} 
\leq \nu (\nu - \mu )^{-1} \exp{((\alpha - \beta + \varepsilon (2+\mu+\nu^{-1}))t)}
      \| h_2 - h_1 \|_{\text{Lip}}. 
\end{displaymath}
Hence, there exists a unique fixed point $h_t \in Y$ for $t$ sufficiently large. 
We can show that $h_t$ is a fixed point of $T_{\tau}$ for all $\tau \geq 0$ and
$h^{+} := h_t$ is the required Lipschitz  function with  $\graph(h^{+}) = W^{+}$ 
and $h^{+}(0) = 0$.
\end{proof}
\begin{remark}
\label{rem:tangencyHplus}
Let $ Y_0 = \{ h \in Y : h \text{ is differentiable at } 0 \text{ and } Dh(0) = 0 \}$.
Then $Y_0$ is closed in $Y$.  As $D \tilde f(0) = 0$ (in fact $Df(0)=0$ from Assumption
\ref{assump:nonlinearFInvar}),
it can be shown that $T_t : Y_0 \rightarrow Y_0$ for all 
$t > 0$. Hence, the fixed point $h^{+}$ in Theorem
\ref{th:globalUnstaMan} lies on $Y_0$
(see the proposition after the proof of Theorem 2.2 in \cite{MR1000974}). 
\end{remark}
The next two theorems give the existence of the local stable and the local unstable invariant manifolds
for \eqref{eq:absSemiLinInvar}.
\begin{theorem}[{\cite[Theorem 1.1(i)]{MR1000974}}]
\label{th:locStaMan}
 Under the assumptions given above, there exists an open neighbourhood
 $U$ of $0$ in $L^2(\Omega)$ such that $W^s$ is a Lipschitz manifold
 which is tangent to $X^{s}$ at $0$, that is, there exists a Lipschitz
 function $h^s: P^s(U) \rightarrow X^{cu}$ such that
 $\text{\upshape{graph}}(h^s) = W^s$, $h^s(0) =0$ and $h^s$ is
 differentiable at $0$ with $Dh^s(0)=0$.
\end{theorem}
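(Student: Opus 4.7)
The plan is to apply the Hadamard construction of Theorem \ref{th:globalStaMan} to the decomposition with $X^{-} = X^{s}$ and $X^{+} = X^{cu}$, and then pull the resulting global graph back into a small enough ball to recover the local stable set of the original (un-modified) semiflow. Concretely, I would pick $\alpha = -\sigma$ together with some $\beta \in (-\sigma, 0)$ as prescribed by the setup, then select $\mu, \nu$ and $\varepsilon$ subject to \eqref{eq:constMuNu1}--\eqref{eq:constMuNu2}, and finally choose $\delta > 0$ small enough that $f$ is $\varepsilon$-Lipschitz on $B_{L^2(\Omega)}(0, 2\delta)$, as permitted by Assumption \ref{assump:nonlinearFInvar}(i). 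Setting $\tilde f := \Psi f$ and letting $\tilde\Phi_t$ denote its flow, Theorem \ref{th:globalStaMan} produces a Lipschitz $h^{-} : X^{s} \to X^{cu}$ with $W^{-} = \graph(h^{-})$ and $h^{-}(0) = 0$. I would then take $U := B_{L^2(\Omega)}(0, \delta)$ and define $h^{s} := h^{-}|_{P^{s}(U)}$.

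The heart of the argument is to verify $W^{s}(U) = \graph(h^{s})$, i.e.\ $W^{s}(U) = W^{-} \cap U$. For the inclusion $W^{s}(U) \subseteq W^{-}$: if $u \in W^{s}(U)$, then $\Phi_t(u) \in U \subset B_{L^2(\Omega)}(0, 2\delta)$ for all $t \geq 0$, so on this orbit $\tilde f = f$ and hence $\tilde\Phi_t(u) = \Phi_t(u)$. Since $\Phi_t(u) \to 0$ exponentially by Definition \ref{def:staUnstaInvManifold} and the rate can be pinned down (using the spectral bound $\|S^{s}(t)\| \leq M e^{-\sigma t}$ together with the small Lipschitz constant $\varepsilon$ along this orbit), after possibly shrinking $\delta$ one ensures the rate exceeds $\gamma$, giving $e^{\gamma t}\tilde\Phi_t(u) \to 0$ and hence $u \in W^{-}$. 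For the reverse inclusion $\graph(h^{s}) \subseteq W^{s}(U)$: any $u = v + h^{-}(v)$ with $\|v\|_{X^{s}}$ small lies in $W^{-}$, so $e^{\gamma t}\tilde\Phi_t(u) \to 0$. The cone-invariance built into the construction (cf.\ $K_\lambda$ in \eqref{eq:conKlambda}) keeps the forward orbit inside a graph of a $\mu$-Lipschitz function, which together with the $\gamma$-decay forces $\tilde\Phi_t(u) \in U$ for all $t \geq 0$ when $v$ is sufficiently small; along such an orbit $\tilde\Phi_t = \Phi_t$, and the exponential decay is inherited, so $u \in W^{s}(U)$.

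For the tangency assertion $Dh^{s}(0) = 0$, I would exploit the freedom in the choice of parameters. Given any $\mu' \in (0, \mu)$, the inequalities \eqref{eq:constMuNu1}--\eqref{eq:constMuNu2} can be satisfied with $\mu$ replaced by $\mu'$ upon choosing a smaller $\varepsilon' > 0$ and a correspondingly smaller $\delta' \in (0, \delta)$. Running the construction again yields a $\mu'$-Lipschitz graph $h^{-}_{\mu'}$ whose restriction is again the local stable set relative to $B_{L^2(\Omega)}(0,\delta')$, by the preceding paragraph applied at $\delta'$. By uniqueness of this local set, $h^{s}$ and $h^{-}_{\mu'}$ must agree on a common small neighborhood of $0$ in $X^{s}$; hence $\|h^{s}(v)\|_{L^2(\Omega)} \leq C\mu' \|v\|_{L^2(\Omega)}$ on that neighborhood, with $C$ controlled by \eqref{eq:equivNormXminusplus}. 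Letting $\mu' \to 0$ and using $h^{s}(0) = 0$ produces $Dh^{s}(0) = 0$.

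The main obstacle will be the inclusion $W^{s}(U) \subseteq W^{-}$: the ``exponential decay'' in Definition \ref{def:staUnstaInvManifold} is only qualitative, whereas membership in $W^{-}$ requires a quantitative $e^{\gamma t}$-rate. This has to be forced by shrinking $U$ enough that any orbit remaining in $U$ automatically decays at a rate strictly better than $\gamma$, which rests on the fact that on $U$ the perturbation $\tilde f = f$ has Lipschitz constant $\varepsilon$ much smaller than $\sigma$, so a variation-of-constants argument based on \eqref{eq:varOfConst} together with a Gronwall estimate preserves the linear rate. The tangency argument is comparatively short, but hinges critically on the uniqueness of the local stable set, which must be extracted from the previous step.
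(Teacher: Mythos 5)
Your proposal follows the paper's route in nearly every respect: same decomposition $X^{-} = X^{s}$, $X^{+} = X^{cu}$ with $\alpha = -\sigma$ and $\beta \in (-\sigma, 0)$, same renorming and cut-off modification, same appeal to Theorem~\ref{th:globalStaMan}, and the same tangency argument by driving $\mu \to 0$ (which is what the paper means by ``letting $\varepsilon \to 0$''). The one genuine gap is the choice $U := B_{L^2(\Omega)}(0,\delta)$. With a ball, the domain $P^{s}(U)$ of $h^{s}$ has radius of order $\|P^{s}\|\delta$, and for $v$ near the boundary of that set the point $v \oplus h^{-}(v)$ will in general lie \emph{outside} $U$; so $\graph(h^{s})$ is not even contained in $U$ and cannot equal $W^{s}(U)$, which by Definition~\ref{def:staUnstaInvManifold} is a subset of $U$. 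Your attempted repair (restrict to ``$\|v\|_{X^{s}}$ sufficiently small'') tacitly redefines $h^{s}$ on a smaller domain than $P^{s}(U)$, leaving the announced equality unproved. A related problem: for $u = v \oplus h^{-}(v) \in W^{-}$, the $L^2$-norm of $\Phi_t(u)$ is controlled only by $(1+\mu)\|v(t)\|_{X^{-}}$, which can exceed $\|u(0)\|_{L^2(\Omega)}$ by a factor involving $M_1$ and $\|P^{-}\|$; so even for an orbit on $W^{-}$ starting in the ball, $W^{-} \cap U$ need not be positively invariant when $U$ is a ball, and your reverse inclusion does not close.

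The paper (following Bates--Jones) sidesteps both issues by choosing $U$ to be a \emph{product} neighbourhood $V_1 \times V_2$ with $V_1 \subset X^{-}$ a ball of radius $\delta_1$ in the $\|\cdot\|_{X^{-}}$ norm, $V_2 \subset X^{+}$ a ball of radius $\delta_2$, and $\delta_1 < \delta_2$ (see the paragraph after the proof of Theorem~\ref{th:locUnstaMan}). Then $P^{s}(U) = V_1$ exactly, and $\|h^{-}(v)\|_{X^{+}} \leq \mu\|v\|_{X^{-}} < \mu\delta_1 < \delta_2$ yields $\graph(h^{-}|_{V_1}) \subseteq V_1 \times V_2 = U$; positive invariance follows because under the renorming $t \mapsto \|v(t)\|_{X^{-}}$ decays monotonically on $W^{-}$ (the effective rate $\alpha + \varepsilon(1+\mu)$ is negative), while the $X^{+}$-component stays below $\mu\|v(t)\|_{X^{-}} < \delta_2$. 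Your forward inclusion $W^{s}(U) \subseteq W^{-}$ (forcing a quantitative $e^{\gamma t}$-rate by a Gronwall argument) and your tangency argument via uniqueness are both sound and fill in steps the paper leaves to the reference; the fix is simply to replace the ball with the product neighbourhood.
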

\begin{proof}[Sketch of the proof] 
Set $X^{-} =X^{s}$ and  $X^{+} = X^{cu}$.
We take $\alpha = -\sigma$ and fix  $\beta$ such that $-\sigma < \beta < 0$. 
Renorm $X^{-}$ and $X^{+}$ by \eqref{eq:normXminusplus}. 
By Assumption \ref{assump:nonlinearFInvar},
there exists $\delta > 0$ such that the modification $\tilde f$ has a Lipschitz constant 
$\varepsilon < (\beta - \alpha)/4$ and the modified system agrees with the original 
system on $B_{L^2(\Omega)} (0, \delta)$. By applying Theorem
\ref{th:globalStaMan}, we can find a product neighbourhood
$U \subset B_{L^2(\Omega)} (0, \delta)$ and prove that
$W^s = W^{-} \cap U$ is a local stable invariant manifold. It can be
shown that any local stable manifold constructed using another renorming
and modification agrees on a neighbourhood on which the manifolds are both defined.
The tangency condition $Dh^s(0)=0$ follows by making $\mu \rightarrow 0$ (by
letting $\varepsilon \rightarrow 0$ and possibly shrinking $U$).
\end{proof}
\begin{theorem}[{\cite[Theorem 1.2(i)]{MR1000974}}]
\label{th:locUnstaMan}
 Under the assumptions given above, there exists an open neighbourhood
 $U$ of $0$ in $L^2(\Omega)$ such that $W^u$ is a Lipschitz manifold
 which is tangent to $X^{u}$ at $0$, that is, there exists a Lipschitz
 function $h^u: P^u(U) \rightarrow X^{cs}$ such that
 $\text{\upshape{graph}}(h^u) = W^u$, $h^u(0) =0$ and $h^u$ is
 differentiable at $0$ with $Dh^u(0)=0$.
\end{theorem}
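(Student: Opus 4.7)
The plan is to mirror the argument sketched for Theorem \ref{th:locStaMan}, but with the roles of the subspaces reversed and with Theorem \ref{th:globalUnstaMan} (together with Remark \ref{rem:tangencyHplus}) playing the part that Theorem \ref{th:globalStaMan} played there. So the first step is to choose the splitting $L^2(\Omega) = X^{-} \oplus X^{+}$ with $X^{-} = X^{cs}$ and $X^{+} = X^{u}$; since $0$ is hyperbolic we have $\sigma^c = \emptyset$, so $X^{cs} = X^s$ in the end, but the abstract framework does not need this. Following the bullet point after \eqref{eq:betaSplus}, I pick $\beta > 0$ with $\beta < \min\{\real\lambda : \lambda \in \sigma^u\}$ and then $\alpha$ with $0 < \alpha < \beta$. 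Renorm $X^{\pm}$ by \eqref{eq:normXminusplus}; then $-A^{\pm}$ satisfies \eqref{eq:alphaSminus} and \eqref{eq:betaSplus}.

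Next, use Assumption \ref{assump:nonlinearFInvar} to pick $\delta > 0$ so small that the cut-off modification $\tilde f$ described after \eqref{eq:equivNormXminusplus} is globally Lipschitz with constant $\varepsilon$ satisfying $\varepsilon < (\beta - \alpha)/4$ and also \eqref{eq:constMuNu1}--\eqref{eq:constMuNu2} for suitable $\mu, \nu$ and $\gamma$. By shrinking $\delta$ further we may additionally assume that the modified and original equations coincide on $B_{L^2(\Omega)}(0,\delta)$. Apply Theorem \ref{th:globalUnstaMan} to the modified system to obtain a global unstable invariant manifold $W^{+} = \graph(h^{+})$ for a Lipschitz function $h^{+} : X^{+} \to X^{-}$ with $h^{+}(0) = 0$, and invoke Remark \ref{rem:tangencyHplus} to upgrade $h^{+}$ to be differentiable at $0$ with $Dh^{+}(0) = 0$.

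Then I would localise: choose a product neighbourhood $U \subset B_{L^2(\Omega)}(0,\delta)$ of the form $P^{-}(U) \times P^{+}(U)$ small enough that every orbit inside $U$ agrees with a modified orbit and every backwards branch of a point of $W^{+}$ that stays inside $U$ for all $t \le 0$ actually decays exponentially faster than the rate $e^{\gamma t}$ used to define $W^{+}$. This identifies $W^u(U) = W^{+} \cap U$, so setting $h^u := h^{+}|_{P^u(U)}$ (noting $X^{+} = X^u$, hence $P^{+} = P^u$ and $X^{-} \supset X^{cs}$) gives a Lipschitz parametrisation of $W^u$ with $h^u(0) = 0$ and tangency $Dh^u(0) = 0$.

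It remains to check that the resulting local unstable manifold $W^u$ is intrinsic, i.e.\ independent of the renorming and of the particular cut-off $\Psi$ and $\delta$: two such constructions must agree on a common neighbourhood of $0$. This uses the exponential trichotomy in standard fashion and parallels the analogous statement mentioned for $W^s$. The main obstacle, as in the stable case, is this last localisation/intrinsicness step, because backwards branches are not unique a priori and one has to verify that a backwards branch exists in $U$ for all $t < 0$ exactly when the point sits on $\graph(h^{+})$ near $0$; the cone invariance from \eqref{eq:conKlambda} and the exponential separation \eqref{eq:alphaSminus}--\eqref{eq:betaSplus} are precisely what makes this work, and no new ingredient beyond what the excerpt has established is needed.
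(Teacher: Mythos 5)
Your proposal is correct and takes essentially the same route as the paper's own sketch: choose the splitting $X^{-}=X^{cs}$, $X^{+}=X^{u}$, pick $\beta < \min\{\real\lambda : \lambda\in\sigma^u\}$ and $0<\alpha<\beta$, renorm via \eqref{eq:normXminusplus}, cut off $f$ to get a small global Lipschitz constant, apply Theorem \ref{th:globalUnstaMan}, localize on a product neighbourhood, and obtain tangency from Remark \ref{rem:tangencyHplus}. (One small slip: you write $X^{-}\supset X^{cs}$ near the end, but by your own choice of splitting $X^{-}=X^{cs}$.)
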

\begin{proof}[Sketch of the proof] 
Set $X^{-} =X^{cs}$ and  $X^{+} = X^{u}$. We take $\beta > 0$ such that
$\beta < \min\{\text{Re}(\lambda) : \lambda \in \sigma^u \}$ and fix
$\alpha$ such that $0 < \alpha < \beta$.
Renorm $X^{-}$ and $X^{+}$ and modify the nonlinearity $f$ as in the
proof of Theorem \ref{th:locStaMan}.
Applying Theorem \ref{th:globalUnstaMan}, we can find a product neighbourhood
$U \subset B_{L^2(\Omega)} (0, \delta)$ and prove that
$W^u = W^{+} \cap U$ is a local unstable invariant manifold. It can be
shown that any local unstable manifold constructed using another renorming
and modification agrees on a neighbourhood on which the manifolds are both defined.
The tangency condition $Dh^s(0)=0$ follows from Remark \ref{rem:tangencyHplus}.
\end{proof}
The product neigbourhood $U$ in Theorem \ref{th:locStaMan} and Theorem \ref{th:locUnstaMan}
can be chosen to be  $U = V_1 \times V_2$ where $V_1 \subset X^{-}$ is a ball of radius $\delta_1$ 
and $V_2 \subset X^{+}$ is a ball of radius $\delta_2$ such that 
$\delta_1 < \delta_2$ for the local stable manifold and $\delta_1 > \delta_2$ 
for the local unstable manifold. In fact, with these choices of product neighbourhoods, 
$W^s$ is positively invariant and $W^u$ is negatively invariant (
see property (P4) in \cite{MR1000974}).

\subsection{Existence of invariant manifolds for the perturbed equations}
\label{subsec:existInvPert}

In this section, we apply the construction of invariant manifold in Section
\ref{subsec:constructInv} to obtain invariant manifolds for the perturbed equations 
\eqref{eq:absSemiLinInvarN} under the assumptions stated in Theorem \ref{th:mainResults}
and Theorem \ref{th:mainResults2}. We first collect some preliminary results
on domain perturbation for solutions of parabolic equations and the corresponding elliptic
equations.

Under Mosco convergence (Assumption \ref{assum:bddPertMosco}) and the uniform boundedness
of the domains, it is known that if $\lambda \in \rho(-A)$, then $\lambda \in
\rho(-A_n)$ for $n$ sufficiently large and $(\lambda + A_n)^{-1} \rightarrow 
(\lambda + A)^{-1}$ in $\mathscr L(L^2(D))$ (see \cite[Corollary 4.7]{MR1955096}).
An important consequence is stated in the following lemma.
%
%
%
%

\begin{lemma}[{\cite[Corollary 4.2]{MR1955096}}]
\label{lem:convSpectProj}
Suppose that Assumption \ref{assum:An} and \ref{assum:bddPertMosco}
are satisfied.
If $\Sigma \subset \sigma(-A)$ is a compact spectral set and
$\Gamma$ is a rectifiable closed curve enclosing $\Sigma$ and separating
it from the remaining of spectrum, then $\sigma(-A_n)$ is separated by
$\Gamma$ into a compact spectral set $\Sigma_n$ and the rest of
spectrum for $n$ sufficiently large. Moreover, for the corresponding 
spectral projections $P$ and $P_n$, we have that the images of $P$ and $P_n$
have the same dimension and $P_n$ converges to $P$ in norm 
\end{lemma}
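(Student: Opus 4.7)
The plan is to represent the spectral projections as Dunford contour integrals of the resolvents and to upgrade the pointwise operator-norm resolvent convergence (already available from \cite[Corollary 4.7]{MR1955096}, cited in the paragraph preceding the statement) to uniform convergence on the compact curve $\Gamma$. Once this is done, convergence of projections and equality of dimensions follow from the standard calculus of projections on Banach spaces.

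The first step is to show that $\Gamma \subset \rho(-A_n)$ for all $n$ sufficiently large and that
$(\lambda + A_n)^{-1} \to (\lambda + A)^{-1}$ in $\mathscr L(L^2(D))$ \emph{uniformly} in $\lambda \in \Gamma$. The map $\lambda \mapsto (\lambda + A)^{-1}$ is analytic on the open set $\rho(-A) \supset \Gamma$, and since $\Gamma$ is compact,
$M := \sup_{\lambda \in \Gamma} \|(\lambda + A)^{-1}\| < \infty$. Fix $\lambda_0 \in \Gamma$ and choose $N_0$ so large that $\|(\lambda_0 + A_n)^{-1} - (\lambda_0 + A)^{-1}\| < \eta$ for all $n \geq N_0$. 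Then the Neumann series expansion
\[
  (\lambda + A_n)^{-1} = \sum_{k=0}^{\infty} (\lambda_0 - \lambda)^k (\lambda_0 + A_n)^{-(k+1)}
\]
converges on a disk of radius $(M + \eta)^{-1}$ around $\lambda_0$ with bounds depending only on $M$ and $\eta$. Thus the pointwise convergence at $\lambda_0$ extends to a neighbourhood of $\lambda_0$ uniformly in $n \geq N_0$, and a finite sub-cover of $\Gamma$ delivers the required uniform convergence on $\Gamma$. In particular $\Gamma \subset \rho(-A_n)$ for $n$ large, so $\Gamma$ splits $\sigma(-A_n)$ into a compact spectral subset $\Sigma_n$ (the part enclosed by $\Gamma$) and the rest, as claimed.

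With this in hand, I would define
\[
  P_n := \frac{1}{2 \pi i} \int_\Gamma (\lambda + A_n)^{-1}\, d\lambda
\]
for $n$ sufficiently large. Standard spectral theory (see, e.g., \cite{MR0407617}) identifies $P_n$ as the spectral projection associated to $\Sigma_n$, and the uniform convergence established above immediately yields
\[
  \|P_n - P\| \leq \frac{\mathrm{length}(\Gamma)}{2\pi} \sup_{\lambda \in \Gamma} \|(\lambda + A_n)^{-1} - (\lambda + A)^{-1}\| \longrightarrow 0.
\]
The equality $\dim \mathrm{Im}(P_n) = \dim \mathrm{Im}(P)$ then follows from the classical fact that two projections at operator-norm distance strictly less than $1$ have isomorphic (finite-dimensional, here) ranges, the isomorphism being implemented by $I - (P - P_n)$ restricted appropriately.

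The main obstacle is the first step, namely promoting pointwise resolvent convergence to uniform convergence on $\Gamma$. The Neumann-series trick is routine once the uniform lower bound on the radius of convergence is in place, and the real content is checking that $\|(\lambda_0 + A_n)^{-1}\|$ remains uniformly bounded in $n \geq N_0$ so that the finite-sub-cover argument closes with bounds independent of $n$.
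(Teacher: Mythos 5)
The paper states this lemma as a direct citation to \cite[Corollary~4.2]{MR1955096} and supplies no proof of its own, so there is no internal argument to compare against; your sketch is a correct and standard route to the result starting from the pointwise operator-norm resolvent convergence quoted just before the lemma. Two small points to tighten. First, in the Neumann-series step it is not enough that the series converge for each fixed $n$: to get a bound on $\sup_{|\lambda-\lambda_0|<r}\|(\lambda+A_n)^{-1}-(\lambda+A)^{-1}\|$ that tends to $0$ with $\eta$, you should term-by-term insert an estimate of the form $\|B^{k+1}-C^{k+1}\|\le (k+1)\max(\|B\|,\|C\|)^k\|B-C\|$ before summing, which gives a uniform bound of order $\eta$ on a disk of radius depending only on $M$. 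Second, $I-(P-P_n)$ does carry $\mathrm{Im}(P)$ injectively into $\mathrm{Im}(P_n)$, but to conclude equality of (finite) dimensions you should pair it with the reverse injection $I+(P-P_n):\mathrm{Im}(P_n)\hookrightarrow\mathrm{Im}(P)$, or use the standard Kato intertwiner $W=P_nP+(1-P_n)(1-P)$, which is invertible when $\|P-P_n\|<1$ and satisfies $WP=P_nW$. Neither point is a genuine gap; both are routine once flagged.
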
 

We next consider the behaviour of solutions of the initial value problem 
\eqref{eq:absSemiLinInvar} under domain perturbation. Recall from Remark 
\ref{rem:conditionOnSubOp} (ii) that Assumption
\ref{assump:nonlinearFInvar} means $f$ is linear bounded with respect to $u$ and
consequently the solution of \eqref{eq:absSemiLinInvar} exists globally
for any initial condition $u_0 \in L^2(\Omega)$. We can state the convergence of solutions
of parabolic equations under domain perturbation in terms of semiflows as follows.

\begin{theorem} 
\label{th:convSolSemiLinParaDir}
Suppose that Assumption \ref{assum:An}, \ref{assum:bddPertMosco} and 
\ref{assump:nonlinearFInvar} are satisfied.
Let $u_{0,n} \in L^2(\Omega_n)$ and $u_{0} \in L^2(\Omega)$.
If $u_{0,n} |_{\Omega} \rightharpoonup u_0$ weakly in $L^2(\Omega)$, then
\begin{equation}
 \label{eq:convSolUsingSemiflow}
   \Phi_{t,n} (u_{0,n}) \rightarrow  \Phi_t(u_0)
\end{equation}
in $L^2(D)$ as $n \rightarrow \infty$ uniformly with respect to $t \in
(0,t_0]$ for all $t_0 \in (0, \infty)$.
Moreover, if $u_{0,n} \rightarrow  u_0$ strongly in $L^2(D)$, then
\eqref{eq:convSolUsingSemiflow} holds uniformly with respect to 
$t \in [0,t_0]$ for all $t_0 \in (0, \infty)$.
\end{theorem}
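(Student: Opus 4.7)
The plan is to work in $L^2(D)$ via trivial extensions and apply the variation of constants formula \eqref{eq:varOfConst} simultaneously for both problems, then combine the linear semigroup convergence (a known consequence of Mosco convergence) with a Gronwall argument driven by Assumption \ref{assump:nonlinearFInvar}(iii). Write
\begin{equation*}
\Phi_{t,n}(u_{0,n}) - \Phi_t(u_0) \;=\; \bigl[S_n(t)u_{0,n} - S(t)u_0\bigr] \;+\; \int_0^t \!\bigl[S_n(t-\tau)f_n(\Phi_{\tau,n}(u_{0,n})) - S(t-\tau)f(\Phi_\tau(u_0))\bigr]\,d\tau.
\end{equation*}
The first bracket is the linear problem. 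By Lemma~\ref{lem:convSpectProj} (or equivalently the norm resolvent convergence that follows from Assumptions~\ref{assum:An} and \ref{assum:bddPertMosco} via \cite{MR1955096}), $S_n(t) \to S(t)$ in a suitable strong sense on $L^2(D)$. Under strong convergence $u_{0,n}\to u_0$ in $L^2(D)$ the linear term converges uniformly on $[0,t_0]$; under only weak convergence, the compactness of $S(t)$ for $t>0$ (inherited from the compact embedding $H^1_0(\Omega)\hookrightarrow L^2(\Omega)$) converts weak convergence of data into strong convergence of the semigroup orbit, yielding uniformity on $[\varepsilon,t_0]$ for every $\varepsilon>0$, i.e.\ uniformly on every compact subset of $(0,t_0]$.

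Before tackling the integral, I would establish a uniform a priori bound $\sup_{n}\sup_{t\in[0,t_0]}\|\Phi_{t,n}(u_{0,n})\|_{L^2(\Omega_n)} \le R$: using the linear growth bound on $g_n$ (uniform in $n$, cf.\ Remark~\ref{rem:conditionOnSubOp}(ii)) inside \eqref{eq:varOfConst} together with analyticity bounds on $\|S_n(t)\|$ and Gronwall yields such an $R$, depending only on $\sup_n\|u_{0,n}\|$ and $t_0$. Since the orbits stay in $B_{L^2(D)}(0,R)$, Assumption~\ref{assump:nonlinearFInvar}(iii) produces a uniform modulus for $\|f_n(u|_{\Omega_n}) - f(u|_{\Omega})\|_{L^2(D)}$ over $u\in B_{L^2(D)}(0,R)$, and the local Lipschitz constants of $f$ and $f_n$ on balls of radius $R$ are controlled (uniformly in $n$, which one extracts by combining (ii) and (iii) on the ball of radius $R$).

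For the nonlinear integral, split the integrand as
\begin{equation*}
S_n(t-\tau)\bigl[f_n(\Phi_{\tau,n}(u_{0,n})) - f_n(\Phi_\tau(u_0)|_{\Omega_n})\bigr] + \bigl[S_n(t-\tau)f_n(\Phi_\tau(u_0)|_{\Omega_n}) - S(t-\tau)f(\Phi_\tau(u_0))\bigr].
\end{equation*}
The first piece is dominated by $L\,\|\Phi_{\tau,n}(u_{0,n})-\Phi_\tau(u_0)\|_{L^2(D)}$ using the uniform Lipschitz bound $L$ and uniform boundedness of $\|S_n(t)\|$ on $[0,t_0]$. The second piece is an $n$-null contribution by Assumption~\ref{assump:nonlinearFInvar}(iii) (giving $\|f_n(v|_{\Omega_n})-f(v)\|_{L^2(D)}\to 0$ uniformly in $v\in B_{L^2(D)}(0,R)$) together with the linear semigroup convergence applied to the (bounded) family $\{f(\Phi_\tau(u_0))\}_{\tau\in[0,t_0]}$, which is compact in $L^2(\Omega)$ by continuity of $\tau\mapsto f(\Phi_\tau(u_0))$. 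Integrating and applying Gronwall to $\varphi_n(t):=\|\Phi_{t,n}(u_{0,n})-\Phi_t(u_0)\|_{L^2(D)}$ converts $\varphi_n(t)\le \alpha_n(t)+L\int_0^t\varphi_n(\tau)d\tau$ into $\varphi_n(t)\le \alpha_n(t)e^{Lt}$, where $\alpha_n(t)\to 0$ with the required uniformity.

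The main obstacle is the weak-data case: the linear term is \emph{not} uniformly small at $\tau=0$, so when estimating the integral for small $\tau$, one cannot simply bound the integrand uniformly. I would handle this by splitting the $\tau$-integral as $\int_0^\varepsilon + \int_\varepsilon^t$; the first integral is bounded by $\varepsilon\cdot 2\sup_{n,\tau}\|f_n(\Phi_{\tau,n}(u_{0,n}))\|_{L^2}$ (finite by the a priori bound), and the second is handled by the already-established uniform convergence on $[\varepsilon, t_0]$. Taking first $n\to\infty$ then $\varepsilon\to 0$ closes the argument and delivers the uniform convergence on $(0,t_0]$ in the weak case, while the strong case avoids this splitting entirely and yields uniformity on $[0,t_0]$.
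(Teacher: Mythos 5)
Your route is a direct re-derivation of what the paper obtains by citation: you unwind the variation-of-constants formula and run a Gronwall argument with Assumption~\ref{assump:nonlinearFInvar}(iii) supplying the $n$-null forcing term, whereas the paper simply invokes Theorems 4.6 and 6.5 of Daners \cite{MR2119988}. The paper's entire ``proof'' is one technical caveat, and it is precisely the step your proposal waves off: the forms $a_n,a$ are \emph{not} coercive, so passing from the strong resolvent convergence (Mosco convergence plus Assumption~\ref{assum:An}, via \cite{MR1955096}) to convergence of the semigroups $S_n(t),S(t)$ as (degenerate) operators on the common space $L^2(D)$ requires rescaling the forms to be coercive and then applying Arendt's theorem on degenerate semigroups \cite[Theorem 5.2]{MR1832168}. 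Writing ``$S_n(t)\to S(t)$ in a suitable strong sense'' with a reference to Lemma~\ref{lem:convSpectProj} (which concerns spectral projections, not the semigroup) leaves exactly this step unaccounted for.

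Two further problems. First, your Gronwall needs a Lipschitz constant $L$ for $f_n$ on $B_{L^2(\Omega_n)}(0,R)$, uniform in $n$, where $R$ is the a priori bound. Assumption~\ref{assump:nonlinearFInvar}(ii) supplies only arbitrarily \emph{small} Lipschitz constants on balls of uniformly chosen small radius near $0$, and ``locally Lipschitz'' on $L^2$ does not imply Lipschitz on bounded sets, so ``combining (ii) and (iii) on the ball of radius $R$'' does not actually produce $L$; when the paper later invokes this theorem it is always for the cut-off nonlinearities $\tilde f_n$, which are globally Lipschitz with a uniform small constant, and that is how the issue is sidestepped. Second, your $\int_0^\varepsilon+\int_\varepsilon^t$ split repairs the nonlinear integral but not the linear term $S_n(t)u_{0,n}-S(t)u_0$, which under merely weak convergence of data is not uniformly small on $(0,t_0]$: take $\Omega_n=\Omega$, $A_n=A$, $f\equiv 0$, $u_{0,n}=u_0+\phi_n$ with $\phi_n$ the $n$-th normalised eigenfunction of $-A$; then $\sup_{t\in(0,t_0]}\|S(t)(u_{0,n}-u_0)\|_{L^2(\Omega)}=1$ for every $n$. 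What your argument actually yields in the weak case is uniform convergence on compact subsets of $(0,t_0]$ --- which, to be fair, is all the paper ever uses, since \eqref{eq:convSolUsingSemiflow} is only ever applied at a fixed $t>0$.
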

\begin{proof}
The assertion follows from similar arguments for the proof of \cite[Theorem 6.5]{MR2119988}
(the case of $-\Delta$), 
that is, by applying \cite[Theorem 4.6]{MR2119988}. The only minor modification is that we 
need to rescale the elliptic forms $a_n(\cdot,\cdot)$ and $a(\cdot,\cdot)$ into coercive forms
in order to apply \cite[Theorem 5.2]{MR1832168} to obtain the convergence of (degenerate) 
semigroups from the strong convergence of the resolvents. Note also that the convergence
result under stronger assumptions on domains can be found in \cite{MR1404388}. 
\end{proof}

To construct invariant manifolds for the perturbed problem \eqref{eq:absSemiLinInvarN},
we decompose $\sigma(-A_n) = \sigma_n^{s} \cup \sigma_n^c \cup \sigma_n^u$ where
$\sigma_n^{s}, \sigma_n^c$ and $\sigma_n^u$ are sets defined similarly to 
\eqref{eq:sigmaSCU}.
 By Lemma \ref{lem:convSpectProj}, we have that $\Gamma^c$ and $\Gamma^u$ 
separate $\sigma_n^c$ and $\sigma_n^u$ respectively from the remaining of spectrum for $n$ 
sufficiently large. The hyperbolicity assumption ($\sigma^c = \emptyset$) implies that
 $\sigma_n^c = \emptyset$  and hence the equilibrium $0 \in L^2(\Omega_n)$ of 
\eqref{eq:absSemiLinInvarN} is hyperbolic for all $n$ sufficiently large. 
We define the spectral projections $P_n^c$ and $P_n^u$ similarly to \eqref{eq:spectralProjs}
and write $P_n^s := 1 - P_n^c - P_n^u$. Note that the hyperbolicity assumption implies
$P_n^c = 0$ for $n$ sufficiently large.
In addition, Lemma \ref{lem:convSpectProj} implies that 
\begin{equation}
\label{eq:convSpectalProjCU}
  P_n^c  \rightarrow  P^c  \quad \text{ and } \quad  P_n^u \rightarrow P^u  
\end{equation}
in $\mathscr L(L^2(D))$ as $n \rightarrow \infty$. 
We decompose 
\begin{equation}
\label{eq:splitL2OmegaN}
   L^2(\Omega_n) = X_n^s \oplus X_n^c \oplus X_n^u, 
\end{equation}
where $X_n^s$, $X_n^c$ and $X_n^u$ are the images of $P_n^s$, $P_n^c$
and $P_n^u$, respectively.
From the above consideration we have that  $X_n^c = \{ 0\}$ and $X_n^u$ is a 
finite dimensional subspace with $\dim(X_n^u) =\dim(X^u)$ for all $n$ sufficiently large. 
 
In order to obtain invariant manifolds for the modified system
of the perturbed equation \eqref{eq:absSemiLinInvarN}, we decompose $L^2(\Omega_n)$ as 
$X_n^{-} \oplus X_n^{+}$ in two different ways as in Section \ref{subsec:constructInv}.
In particular, $\dim(X_n^{+}) = \dim(X^{+}) < \infty$ for $n$ sufficiently large and
\begin{equation}
 \label{eq:convProplus}
     P_n^{+} \rightarrow  P^{+} 
\end{equation}
in $\mathscr L(L^2(D))$. 
By Assumption \ref{assum:An}, we can choose the parameters $\alpha$ and $\beta$
for the restriction of semigroup $S_n(t)$ to $X_n^{-}$ and $X_n^{+}$
uniformly with respect to $n \in \mathbb N$ so that $S_n^{-}(t)$ and
$S_n^{+}(t)$ satisfy similar estimates as in \eqref{eq:alphaSminus} and
\eqref{eq:betaSplus}, respectively.  
We can renorm $X_n^{-}$ and $X_n^{+}$ using similar norms involving 
$S_n^{-}(t)$ and $S_n^{+}(t)$ as defined in \eqref{eq:normXminusplus}.
In particular, similar estimates as in \eqref{eq:equivNormXminusplus} 
hold for the norms $\| \cdot \|_{X_n^{-}}$ and 
$\| \cdot \|_{X_n^{+}}$ with uniform constants $M_1$ and $M_2$ for $n$ sufficiently large.   

By Assumption \ref{assump:nonlinearFInvar}  (ii), 
there exists $\delta > 0$ independent of $n$ such that the modification $\tilde f_n$ of $f_n$ has
a Lipschitz constant $\varepsilon < (\beta - \alpha)/4$  and the modified system 
agrees with the original system on $B_n := B_{L^2(\Omega_n)}(0, \delta)$
for all $n \in \mathbb N$. Therefore, we can construct 
the stable and unstable invariant manifold for the modified system  
by using uniform parameters $\gamma,\mu$ and $\nu$ for all $n$ large. 
By Theorem \ref{th:locStaMan}, there exists a product neighbourhood $U_n \subset B_n$ 
such that a local stable invariant manifold is $W_n^{s}(U_n) = \graph(h_n^{-}) \cap U_n$.
Since the parameters $\alpha$ and $\beta$ are chosen uniformly for the renorming of 
$X_n^{-}$ and $X_n^{+}$ respectively, we can choose $U_n \subset B_n$ to be a product neighbourhood
$V_{1,n} \times V_{2,n}$ where $V_{1,n} \subset X_n^{-}$ is a ball of radius $\delta_1$ 
and $V_{2,n} \subset X_n^{+}$ is a ball of radius $\delta_2$ with $\delta_1 < \delta_2$ 
for all $n \in \mathbb N$. Without loss of generality we may choose $\delta$ smaller so that 
the modified system agrees with the original system on $\overline{B}_n$
for all $n \in \mathbb N$.
Similarly, by Theorem \ref{th:locUnstaMan}, there exists a product neighbourhood $\tilde{U}_n \subset B_n$ 
such that a local unstable invariant manifold is $W_n^{u}(\tilde{U}_n) = \graph(h_n^{+}) \cap \tilde{U}_n$.
Since the parameters $\alpha$ and $\beta$ are chosen uniformly for the renorming of 
$X_n^{-}$ and $X_n^{+}$ respectively, we can choose $\tilde{U}_n \subset B_n$ to be a product neighbourhood
$\tilde V_{1,n} \times \tilde V_{2,n}$ where $\tilde V_{1,n} \subset X_n^{-}$ is a ball of radius $\tilde \delta_1$ 
and $\tilde V_{2,n} \subset X_n^{+}$ is a ball of radius $\tilde \delta_2$ with $\tilde \delta_1 > \tilde \delta_2$ 
for all $n \in \mathbb N$. Again we may choose $\delta$ smaller so that 
the modified system agrees with the original system on $\overline{B}_n$ for all $n \in \mathbb N$.
Therefore, we have established the existence of  local unstable manifolds and local stable manifolds for
the perturbed problem \eqref{eq:absSemiLinInvarN}.


We can assume that the choice of neighbourhoods considered above applies to the limit problem 
\eqref{eq:absSemiLinInvar} (by possibly shrinking $\delta$).
Therefore, to prove Theorem \ref{th:mainResults} and Theorem \ref{th:mainResults2},
 it remains to verify the continuity under domain perturbation (upper and lower
semicontinuity) of local stable and local unstable invariant manifolds
inside some ball $B_n = B_{L^2(\Omega_n)}(0,\hat \delta)$ contained in $U_n = V_{1,n} \times V_{2,n}$
or $\tilde U_n = \tilde V_{1,n} \times \tilde V_{2,n}$.
\begin{remark}
By our assumptions and the application of \cite[Theorem 5.2]{MR1832168}, we know that
$S_n(t)$ converges to $S(t)$ in the strong operator topology uniformly with respect to $t$
on \emph{compact subsets} of $(0,\infty)$. The main difficulty to prove 
upper and lower semicontinuity of invariant manifolds using the construction in 
\cite{MR1000974} is that we need to deal with sequences of functions under a sequence of the
special norms $\| \cdot \|_{X_n^{-}}$ and $\| \cdot \|_{X_n^{+}}$ defined in terms of the 
supremum of $e^{-\alpha t} \|S_n^{-}(t) v\|_{L^2(\Omega_n)}$ on a non-compact interval
$[0, \infty)$ and the supremum of $e^{-\beta t} \|S_n^{+}(t) w\|_{L^2(\Omega_n)}$ on 
$(-\infty, 0]$, respectively (see \eqref{eq:normXminusplus}).
In particular, we do not generally have the convergence of a sequence of functions in $X_n^{-}$
or $X_n^{+}$ with respect to a sequence of the norms mentioned above.  
\end{remark}

\section{Some technical results towards the proof of semicontinuity}
\label{sec:technicalLemma}
\sectionmark{Some technical results}
In this section, we give some technical results required to prove upper and lower
semicontinuity in Theorem \ref{th:mainResults} and Theorem \ref{th:mainResults2}.
In particular, we prove some convergence result for a
bounded sequence $(w_n)_{n \in \mathbb N}$ with $w_n \in X_n^{+}$ for
each $n \in \mathbb N$. Moreover, we give a characterization of upper and
lower semicontinuity.

\subsection{Convergence of sequences in finite dimensional subspaces}
\begin{lemma}
\label{lem:convUsUcUu}
Let $(\phi_n)_{n \in \mathbb N}$ be a sequence with $\phi_n \in L^2(\Omega_n)$ 
for each $n \in \mathbb N$ and $\phi \in L^2(\Omega)$. 
We decompose $\phi_n := \phi_n^s \oplus \phi_n^c \oplus \phi_n^u$
corresponding to the decomposition \eqref{eq:splitL2OmegaN}.
Similarly, we decompose $\phi := \phi^s \oplus \phi^c \oplus \phi^u$.
 If $\phi_n \rightarrow  \phi$ strongly in $L^2(D)$,
 then $\phi_n^{*} \rightarrow \phi^{*}$ strongly in $L^2(D)$ for $* =s,c,u$. 
\end{lemma}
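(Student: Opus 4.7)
The plan is to reduce everything to the norm convergence of the spectral projections stated in \eqref{eq:convSpectalProjCU}. Recall that, after identifying each $L^2(\Omega_n)$ with a closed subspace of $L^2(D)$ via trivial extension, Lemma \ref{lem:convSpectProj} gives $P_n^c \to P^c$ and $P_n^u \to P^u$ in $\mathscr L(L^2(D))$, and hence also $P_n^s = I - P_n^c - P_n^u \to P^s = I - P^c - P^u$ in the same operator norm. So the whole statement is really a uniform-boundedness-plus-norm-convergence argument.

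First I would handle $* = c$ and $* = u$ in parallel. Writing $\phi_n^* = P_n^* \phi_n$ and $\phi^* = P^* \phi$ (everything viewed inside $L^2(D)$), I would split
\begin{equation*}
\phi_n^* - \phi^* \;=\; (P_n^* - P^*)\phi_n \;+\; P^*(\phi_n - \phi).
\end{equation*}
Since $\phi_n \to \phi$ in $L^2(D)$, the sequence $(\phi_n)$ is bounded in $L^2(D)$, so the first term satisfies
\begin{equation*}
\|(P_n^* - P^*)\phi_n\|_{L^2(D)} \;\leq\; \|P_n^* - P^*\|_{\mathscr L(L^2(D))}\,\|\phi_n\|_{L^2(D)} \;\longrightarrow\; 0
\end{equation*}
by \eqref{eq:convSpectalProjCU}. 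The second term tends to zero because $P^* \in \mathscr L(L^2(D))$ is bounded and $\phi_n \to \phi$ strongly. This yields $\phi_n^c \to \phi^c$ and $\phi_n^u \to \phi^u$ in $L^2(D)$.

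For $* = s$, I would simply use the decomposition $\phi_n^s = \phi_n - \phi_n^c - \phi_n^u$ and $\phi^s = \phi - \phi^c - \phi^u$; the strong convergence $\phi_n^s \to \phi^s$ in $L^2(D)$ then follows from the already-established three convergences. There is no genuine obstacle here; the only point worth stating carefully is the convention that the projections are extended to operators on $L^2(D)$ (so that $P_n^*\phi_n$, which a priori lives in $L^2(\Omega_n)$, is treated as its trivial extension), which is precisely the setting in which \eqref{eq:convSpectalProjCU} holds.
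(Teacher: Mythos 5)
Your computation is correct and is exactly the paper's argument: the triangle-inequality split $\phi_n^* - \phi^* = (P_n^* - P^*)\phi_n + P^*(\phi_n - \phi)$ for $* = c,u$, and $\phi_n^s = \phi_n - \phi_n^c - \phi_n^u$ for the $s$ component. Be aware, though, that your side remark that ``hence also $P_n^s = I - P_n^c - P_n^u \to P^s$ in the same operator norm'' is false, and the paper devotes Remark~\ref{remark:NonConvPminus} to refuting precisely this. Once everything is regarded as acting on $L^2(D)$ via trivial extension, the ``$I$'' in the spectral projection onto $X_n^s$ must be read as the orthogonal projection $R_{\Omega_n}$ of $L^2(D)$ onto $L^2(\Omega_n)$ (otherwise the range of $I - P_n^c - P_n^u$ is not $X_n^s$), and $R_{\Omega_n}$ does \emph{not} converge to $R_{\Omega}$ in $\mathscr L(L^2(D))$ in general; the paper's ``fingers'' example in that remark keeps $|\Omega_n \setminus \Omega|$ bounded away from zero while Mosco convergence still holds. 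The operator-norm convergence of $P_n^s$ requires the extra hypothesis $|\Omega_n| \to |\Omega|$, which appears only in Theorem~\ref{th:mainResults2}, not here. Your proof survives because you never actually invoke that false claim: applying the identity $\phi_n^s = \phi_n - \phi_n^c - \phi_n^u$ to the specific sequence $\phi_n$, which is supported on $\Omega_n$ (so $R_{\Omega_n}\phi_n = \phi_n$) and strongly convergent by hypothesis, needs only the already-established convergences of $\phi_n$, $\phi_n^c$, $\phi_n^u$. Delete the side remark and the proof is clean.
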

\begin{proof}
A direct application of \eqref{eq:convSpectalProjCU} implies 
%
%
%
$\phi_n^c \rightarrow \phi^c$ and $\phi_n^u \rightarrow \phi^u$ in $L^2(D)$.
Since $\phi_n^s = (1-P_n^c -P_n^u) \phi_n$ 
and $\phi^s = (1-P^c -P^u) \phi$, 
we also get $ \phi_n^s \rightarrow  \phi^s$ in $L^2(D)$. 
%
\end{proof}
%
%
%
\begin{remark}
\label{remark:NonConvPminus}
The convergence $\phi_n^s \rightarrow \phi^s$ in Lemma \ref{lem:convUsUcUu} 
is different to convergence of the projections  
 $(1-P_n^c-P_n^u)  \rightarrow  (1-P^c-P^u)$ in $\mathscr L(L^2(D))$. 
 For example, consider a square domain $\Omega$ in $\mathbb R^2$ 
 perturbed by attaching ``fingers'' to one of the sides. 
 If we increase the number of fingers so that the
 measure remains the same (by letting their width go to zero). Then
 $|\Omega_n \backslash \Omega|$ is a positive constant for all $n \in
 \mathbb N$. It is known that $H^1_0(\Omega_n)$ converges to $H^1_0(\Omega)$ in the sense of Mosco
 (see \cite[Example 8.4]{MR1955096}). 
 Let $f \in L^2(D)$ be the constant function $1$. 
 By \eqref{eq:convSpectalProjCU}, we have that
 $P_n^c f \rightarrow  P^c f$ and $P_n^u f
\rightarrow P^u f$ in $L^2(D)$. If 
$(1- P_n^c -P_n^u) f \rightarrow (1-P^c-P^u) f$ in $L^2(D)$,
then $ f|_{\Omega_n} \rightarrow f|_{\Omega}$ in $L^2(D)$. This cannot be true
because $\|f|_{\Omega_n} - f|_{\Omega}\|_{L^2(D)} = |\Omega_n \backslash \Omega| >
0$ for all $n \in \mathbb N$. Hence, $(1-P_n^c-P_n^u)$ 
does not converge to $(1-P^c-P^u)$ in $\mathscr L(L^2(D))$.
Note that if we impose the assumption that the Lebesgue measure of the domain
converges, that is, $|\Omega_n| \rightarrow |\Omega|$ as $n \rightarrow
\infty$, then we obtain the convergence 
 $(1-P_n^c-P_n^u) \rightarrow (1-P^c-P^u)$ in  $\mathscr L(L^2(D))$.  
\end{remark}

In the next few results, we consider an arbitrary finite dimensional
subspace of $L^2(\Omega_n)$.

\begin{lemma}
\label{lem:equivNormCn}
Let $m$ be a positive integer. Suppose $V_n$ is an $m$-dimensional subspace
of $L^2(\Omega_n)$ with a basis $\{ f_{1,n}, f_{2,n}, \ldots f_{m,n}\}$ 
for each $n \in \mathbb N$, and $V$ is an $m$-dimensional 
subspace of $L^2(\Omega)$ with a basis $\{ f_{1}, f_{2}, \ldots f_{m}\}$.
If $f_{j,n} \rightarrow f_j$ in $L^2(D)$ as $n \rightarrow \infty$ 
for all $j = 1, \ldots, m$, then there exists $\hat c > 0$ such that 
\begin{displaymath}
c_n := \inf \Big \{ \Big \| \sum_{j=1}^m \xi_j f_{j,n}  \Big\|_{L^2(\Omega_n)} :
              \xi =(\xi_1, \ldots, \xi_m) \in \mathbb R^m , |\xi| =1 \Big \}
     \geq \hat c,
\end{displaymath}
for all $n \in \mathbb N$.
\end{lemma}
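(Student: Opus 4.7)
The plan is to argue by contradiction using the compactness of the unit sphere in $\mathbb R^m$, together with the $L^2(D)$-convergence of the basis vectors. Since each $\{f_{1,n}, \ldots, f_{m,n}\}$ is linearly independent, $c_n > 0$ for every $n$, so the only thing to rule out is $\liminf_{n \to \infty} c_n = 0$.

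Suppose for contradiction that along some subsequence (still denoted by $n$) we have $c_n \to 0$. The infimum defining $c_n$ is attained on the compact unit sphere $S^{m-1} \subset \mathbb R^m$, so we may pick $\xi^{(n)} = (\xi_1^{(n)}, \ldots, \xi_m^{(n)}) \in S^{m-1}$ with
\begin{displaymath}
 \Big\| \sum_{j=1}^m \xi_j^{(n)} f_{j,n} \Big\|_{L^2(\Omega_n)} = c_n \to 0.
\end{displaymath}
Passing to a further subsequence if necessary, compactness of $S^{m-1}$ gives $\xi^{(n)} \to \xi \in S^{m-1}$, in particular $|\xi|=1$.

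Next I would use the triangle inequality in $L^2(D)$ (with the trivial-extension convention) to write
\begin{displaymath}
 \Big\| \sum_{j=1}^m \xi_j^{(n)} f_{j,n} - \sum_{j=1}^m \xi_j f_j \Big\|_{L^2(D)}
 \leq \sum_{j=1}^m |\xi_j^{(n)} - \xi_j|\, \|f_{j,n}\|_{L^2(D)} + \sum_{j=1}^m |\xi_j|\, \|f_{j,n} - f_j\|_{L^2(D)}.
\end{displaymath}
Since $f_{j,n} \to f_j$ in $L^2(D)$, the sequence $\|f_{j,n}\|_{L^2(D)}$ is bounded, and $\xi^{(n)} \to \xi$, both terms on the right tend to zero. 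Combined with $\|\sum_j \xi_j^{(n)} f_{j,n}\|_{L^2(\Omega_n)} = \|\sum_j \xi_j^{(n)} f_{j,n}\|_{L^2(D)} \to 0$, this forces $\sum_{j=1}^m \xi_j f_j = 0$ in $L^2(D)$ (hence in $L^2(\Omega)$).

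Linear independence of $\{f_1, \ldots, f_m\}$ then forces $\xi = 0$, contradicting $|\xi|=1$. Therefore $\liminf_n c_n > 0$, and we may take $\hat c := \tfrac{1}{2}\liminf_n c_n > 0$, adjusting by the finitely many exceptional indices if needed (which are harmless since each $c_n > 0$). The only conceptual point to be careful about is the transition between the $L^2(\Omega_n)$-norms and the $L^2(D)$-norm, but under the trivial extension convention these agree for functions supported in $\Omega_n$, so there is no real obstacle.
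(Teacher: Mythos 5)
Your proof is correct and rests on the same ingredients as the paper's: $L^2(D)$-convergence of the bases, compactness of the unit sphere in $\mathbb R^m$, and linear independence of the limit basis. The paper argues directly by first observing that $\| \sum_j \xi_j f_{j,n} \|_{L^2(\Omega_n)} \to \| \sum_j \xi_j f_j \|_{L^2(\Omega)}$ \emph{uniformly} in $|\xi|=1$ and then subtracting a small $\zeta$ from the limit infimum $c$, whereas you argue by contradiction by extracting a convergent subsequence of minimizers $\xi^{(n)}$ — a structurally different but essentially equivalent way of organizing the same compactness argument.
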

\begin{proof}
Let $\xi = (\xi_1, \ldots, \xi_m) \in \mathbb R^m$ with $|\xi| =1$. 
By convergence of the bases, we get
\begin{displaymath}
 \begin{aligned}
 \Big \| \sum_{j=1}^m \xi_j f_{j,n} - \sum_{j=1}^m \xi_j f_{j} \Big\|_{L^2(D)}
 &\leq \sum_{j=1}^m |\xi_j| \| f_{j,n} - f_{j} \|_{L^2(D)}  \\
 &\leq \sum_{j=1}^m  \| f_{j,n} - f_{j} \|_{L^2(D)}  \\
 &\rightarrow 0
 \end{aligned}
\end{displaymath}
as $n \rightarrow \infty$. Notice that the above convergence does not depend on
$\xi$. This means 
$ \| \sum_{j=1}^m \xi_j f_{j,n}  \|_{L^2(\Omega_n)} \rightarrow
 \| \sum_{j=1}^m \xi_j f_{j} \|_{L^2(\Omega)}$ uniformly with respect to
$\xi \in \mathbb R^m$ with $|\xi| =1$. 
Let
\begin{displaymath}
c:= \inf \Big \{ \Big \| \sum_{j=1}^m \xi_j f_{j}  \Big\|_{L^2(\Omega)} :
              \xi =(\xi_1, \ldots, \xi_m) \in \mathbb R^m , |\xi| =1 \Big \}.
\end{displaymath}
In particular, choosing $\zeta > 0$ such that $c-\zeta > 0$, 
there exists $N_0 \in \mathbb N$ 
(independent of $\xi \in \mathbb R^m$ with $|\xi| =1$) such that
\begin{displaymath}
 \Big\| \sum_{j=1}^m \xi_j f_{j,n}  \Big \|_{L^2(\Omega_n)} 
 \geq  \Big \| \sum_{j=1}^m \xi_j f_{j} \Big \|_{L^2(\Omega)} - \zeta,
\end{displaymath}
for all $n > N_0$ and for all $\xi \in \mathbb R^m$ with $|\xi| =1$.
Since $ \| \sum_{j=1}^m \xi_j f_{j}  \|_{L^2(\Omega)} \geq c$,
it follows that
 $\left\| \sum_{j=1}^m \xi_j f_{j,n}  \right \|_{L^2(\Omega_n)} 
 \geq  c - \zeta$
for all $n > N_0$ and for all $\xi \in \mathbb R^m$ with $|\xi| =1$.
Taking the infimum  over $\xi \in \mathbb R^m$ with $|\xi| =1$, we obtain
   $c_n \geq c - \zeta > 0$
for all $n \geq N_0$. Finally, taking $\hat c := \min\{c_1,\ldots c_{N_0},  c- \zeta \}$,
the lemma follows.
\end{proof}
An immediate application of Lemma \ref{lem:equivNormCn} is the following result.
\begin{corollary}
\label{cor:convFuncMdim}
Assume that $V_n$ and $V$ are as in Lemma \ref{lem:equivNormCn} and that the 
convergence of bases $f_{j,n} \rightarrow f_j$ in $L^2(D)$ 
as $n \rightarrow \infty$ holds for all $j = 1, \ldots, m$. 
Let $u_n$ be a sequence such that $u_n \in V_n$ for each $n \in \mathbb N$.
If $\|u_n\|_{L^2(\Omega_n)}$ is uniformly bounded, then there exists 
a subsequence $u_{n_k}$ such that $u_{n_k} \rightarrow u$
in $L^2(D)$ with a limit $u \in V$.
\end{corollary}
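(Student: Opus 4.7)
The plan is to reduce the problem to Bolzano--Weierstrass in $\mathbb{R}^m$ via the coordinate representation in the convergent bases. Write $u_n = \sum_{j=1}^m \xi_{j,n} f_{j,n}$ with $\xi_n = (\xi_{1,n},\ldots,\xi_{m,n}) \in \mathbb{R}^m$. For the (trivial) case $\xi_n=0$, $u_n=0$; otherwise dividing by $|\xi_n|$ puts a unit-norm coefficient vector into the infimum defining $c_n$ in Lemma \ref{lem:equivNormCn}. That lemma therefore yields
\begin{equation*}
\|u_n\|_{L^2(\Omega_n)} \geq \hat c\, |\xi_n|
\end{equation*}
for all $n \in \mathbb N$.

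Since $\|u_n\|_{L^2(\Omega_n)}$ is uniformly bounded, the sequence $(\xi_n)$ is bounded in $\mathbb{R}^m$. By Bolzano--Weierstrass, extract a subsequence $\xi_{n_k} \to \xi =(\xi_1,\ldots,\xi_m) \in \mathbb{R}^m$. Define the candidate limit $u := \sum_{j=1}^m \xi_j f_j \in V$.

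It remains to verify $u_{n_k} \to u$ in $L^2(D)$. Using the triangle inequality in $L^2(D)$ together with the trivial extension convention,
\begin{equation*}
\|u_{n_k} - u\|_{L^2(D)} \leq \sum_{j=1}^m |\xi_{j,n_k} - \xi_j|\, \|f_{j,n_k}\|_{L^2(D)} + \sum_{j=1}^m |\xi_j|\, \|f_{j,n_k} - f_j\|_{L^2(D)}.
\end{equation*}
The factors $\|f_{j,n_k}\|_{L^2(D)}$ are bounded because $f_{j,n} \to f_j$ in $L^2(D)$, and both $\xi_{j,n_k} \to \xi_j$ and $\|f_{j,n_k} - f_j\|_{L^2(D)} \to 0$ by hypothesis; thus both sums tend to $0$ as $k \to \infty$.

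There is no real obstacle here: the only subtle point is the uniform lower bound $\hat c > 0$ on the coefficient-to-norm ratio, which is precisely what Lemma \ref{lem:equivNormCn} supplies and which guarantees that boundedness of $\|u_n\|_{L^2(\Omega_n)}$ transfers to boundedness of the coefficient vectors uniformly in $n$. Without this uniform lower bound across the varying spaces $V_n$, the coordinates could blow up even though the functions remain bounded, so the step invoking Lemma \ref{lem:equivNormCn} is the heart of the argument.
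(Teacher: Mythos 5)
Your proof is correct and follows essentially the same route as the paper: write $u_n$ in the basis $\{f_{j,n}\}$, use Lemma \ref{lem:equivNormCn} to get the uniform bound $\|u_n\|_{L^2(\Omega_n)} \geq \hat c\,|\xi_n|$ on the coefficient vectors, extract a convergent subsequence of coefficients by Bolzano--Weierstrass, and pass to the limit via the triangle inequality and convergence of the bases. Your version is slightly more explicit (handling $\xi_n=0$ and writing out the final estimate), but the argument is the same.
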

\begin{proof}
For each $n \in \mathbb N$, we write $u_n = \sum_{j=1}^m \xi_{j,n} f_{j,n}$.
By a standard argument in the proof of equivalence of norms for finite dimensional spaces,
\begin{displaymath}
 \sum_{j=1}^m |\xi_{j,n}| \leq \frac{m}{c_n} \|u_n\|_{L^2(\Omega_n)},
\end{displaymath}
for all $n \in \mathbb N$, where $c_n$ is given in Lemma \ref{lem:equivNormCn}.
It follows from the uniform boundedness of $\|u_n\|_{L^2(\Omega_n)}$ and 
Lemma \ref{lem:equivNormCn} that $\sum_{j=1}^m |\xi_{j,n}|$ is uniform bounded.
We can extract a subsequence $\xi_{j,n_k}$ such that $\xi_{j,n_k} \rightarrow \xi_j$
for all $j = 1, \ldots, m$. Hence,
   $u_{n_k} \rightarrow  u :=  \sum_{j=1}^m \xi_j f_j$ 
in $L^2(D)$. 
\end{proof}
Recall that we have $\dim(X_n^{+}) = \dim(X^{+}) < \infty$ for
sufficiently large $n$. We set $d:=\dim(X^{+})$ and fix a certain basis
$\{f_1,f_2, \ldots, f_d \}$ of $X^{+}$. Define
\begin{equation}
\label{eq:basisForXnPlus}
 f_{j,n} := P_n^{+} f_j|_{\Omega_n},
\end{equation}
for $j =1, \ldots, d$. Then we obtain a basis of $X_n^{+}$ as shown below.
\begin{theorem}
\label{th:basisXnPlus}
There exists $N_0 \in \mathbb N$ such that 
$\{f_{1,n},f_{2,n}, \ldots, f_{d,n} \}$ where $f_{j,n}$ defined by \eqref{eq:basisForXnPlus}
is a basis of $X^{+}_n$  for each $n > N_0$. Moreover, 
$f_{j,n} \rightarrow f_j$ in $L^2(D)$ 
as $n \rightarrow \infty$ holds for all $j = 1, \ldots, d$. 
\end{theorem}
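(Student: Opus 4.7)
The plan is to deduce both claims directly from the operator-norm convergence $P_n^{+}\to P^{+}$ in $\mathscr L(L^2(D))$ given in \eqref{eq:convProplus}, together with the fact that $\dim(X_n^{+})=\dim(X^{+})=d$ for all large $n$.

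First I would establish the convergence $f_{j,n}\to f_j$ in $L^2(D)$. Since each $f_j\in X^{+}$, one has $P^{+}f_j=f_j$. Interpreting $P_n^{+}$ as an operator on $L^2(D)$ via the extension-by-zero convention (so that $P_n^{+}u$ for $u\in L^2(D)$ means: restrict to $\Omega_n$, apply the spectral projection on $L^2(\Omega_n)$, then extend by zero to $D$), the trivial extension of $f_{j,n}=P_n^{+}f_j|_{\Omega_n}$ is precisely $P_n^{+}f_j$ in $L^2(D)$. By \eqref{eq:convProplus},
\[
\|f_{j,n}-f_j\|_{L^2(D)} = \|P_n^{+}f_j - P^{+}f_j\|_{L^2(D)} \leq \|P_n^{+}-P^{+}\|_{\mathscr L(L^2(D))}\,\|f_j\|_{L^2(D)} \longrightarrow 0,
\]
which gives the second assertion.

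Next I would show that $\{f_{1,n},\ldots,f_{d,n}\}$ is linearly independent for all sufficiently large $n$; since $\dim(X_n^{+})=d$ and each $f_{j,n}\in X_n^{+}$, this will automatically give a basis. The argument is by contradiction: suppose there is a subsequence (still denoted $n$) and coefficients $\xi_n=(\xi_{1,n},\ldots,\xi_{d,n})\in\mathbb R^d$ with $|\xi_n|=1$ and $\sum_{j=1}^{d}\xi_{j,n}f_{j,n}=0$ in $L^2(\Omega_n)$. By compactness of the unit sphere in $\mathbb R^d$, pass to a further subsequence so that $\xi_n\to\xi$ with $|\xi|=1$. Using the $L^2(D)$-convergence established above,
\[
\sum_{j=1}^{d}\xi_{j,n}f_{j,n} \;\longrightarrow\; \sum_{j=1}^{d}\xi_j f_j \quad\text{in } L^2(D),
\]
so the left side, being zero in $L^2(\Omega_n)$ and hence zero as an element of $L^2(D)$ after trivial extension, forces $\sum_{j=1}^{d}\xi_j f_j=0$ in $L^2(\Omega)$. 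This contradicts the linear independence of $\{f_1,\ldots,f_d\}$ in $X^{+}$.

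The only mildly delicate point is consistency of the extension/restriction conventions when applying \eqref{eq:convProplus} to $f_j$: one needs to verify that the composition ``restrict to $\Omega_n$, apply $P_n^{+}$, extend by zero'' really agrees with the operator $P_n^{+}$ on $L^2(D)$ appearing in \eqref{eq:convProplus}, but this is exactly the convention already fixed in Section \ref{sec:frameworkMain}. Everything else is a straightforward compactness argument in $\mathbb R^d$, so I expect no real obstacle.
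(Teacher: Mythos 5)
Your proof is correct, and it takes a genuinely simpler route than the paper's. The paper proves linear independence for large $n$ by induction on $m\in\{1,\dots,d\}$: assuming $f_{1,n},\dots,f_{m,n}$ are independent for $n>N_m$, it writes a hypothetical dependence as $f_{m+1,n_k}=\sum_{j=1}^{m}\xi_{j,n_k}f_{j,n_k}$, uses the uniform bound on $\|f_{m+1,n_k}\|_{L^2(\Omega_{n_k})}$ to invoke Corollary \ref{cor:convFuncMdim} (which in turn rests on the uniform lower bound $c_n\geq\hat c$ from Lemma \ref{lem:equivNormCn}), and derives a contradiction from uniqueness of limits. Your argument bypasses all of that: by normalizing the coefficient vector $\xi_n\in\mathbb R^d$ to $|\xi_n|=1$ at the outset, you get compactness of the coefficients for free from the unit sphere, so you only need the basis convergence $f_{j,n}\to f_j$ in $L^2(D)$ (which you derive, exactly as the paper does, from \eqref{eq:convProplus} and $P^{+}f_j=f_j$) to pass to the limit and contradict the linear independence of $\{f_1,\dots,f_d\}$. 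What the paper's machinery buys is more: Lemma \ref{lem:equivNormCn} and Corollary \ref{cor:convFuncMdim} are used again to prove Corollary \ref{cor:convBoundSeqInXplus}, which is essential elsewhere. But for Theorem \ref{th:basisXnPlus} alone, your normalization trick makes the induction and the auxiliary lemma unnecessary, and the resulting argument is shorter and more transparent.
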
 
\begin{proof}
The convergence $f_{j,n} \rightarrow f_j$ is clear from
the definition of $f_{j,n}$ and \eqref{eq:convProplus}.  
Since $X^{+}_n$ is $d$-dimensional subspace for all $n$ sufficiently large,
it suffices to show that there exists $N_0 \in \mathbb N$ such that
$f_{1,n},f_{2,n}, \ldots, f_{d,n}$ are linearly independent for each $n > N_0$.
We prove this by using mathematical induction on $m$ for $m = 1,\ldots, d$ in the following 
statement: there exists $N_m \in \mathbb N$ such that
$f_{1,n},f_{2,n}, \ldots, f_{m,n}$ are linearly independent for each $n > N_m$.

The statement is trivial for $m=1$. For the induction step, suppose that the statement is true 
for $1, \ldots, m$ with $m<d$, but there is no $N_{m+1} \in \mathbb N$ such that
$f_{1,n},f_{2,n}, \ldots, f_{m+1,n}$ are linearly independent for each $n > N_{m+1}$.
Thus, we can extract a subsequence $n_k$ (choosing $n_k > N_m$ for all $k \in \mathbb N$) 
such that $f_{1,n_k},f_{2,n_k}, \ldots, f_{m+1,n_k}$ are linearly dependent 
for all $k \in \mathbb N$.
By the linear independence of  $f_{1,n_k},f_{2,n_k}, \ldots, f_{m,n_k}$,
we can write
  $f_{m+1,n_k} = \sum_{j=1}^m \xi_{j,n_k} f_{j,n_k}$
for all $k \in \mathbb N$. Since $f_{m+1,n_k} \rightarrow f_{m+1}$
in $L^2(D)$ as $k \rightarrow \infty$, 
it follows that $\|f_{m+1,n_k}\|_{L^2(\Omega_{n_k})}$ is uniformly bounded. 
Corollary \ref{cor:convFuncMdim} implies that there exists a subsequence denoted again by
$f_{m+1,n_k}$ such that $f_{m+1,n_k} \rightarrow f$ in $L^2(D)$
as $k \rightarrow \infty$, where the limit $f$ belongs to the $m$-dimensional 
subspace spanned by $f_1, f_2, \ldots, f_m$. By the uniqueness of a limit,
we conclude that $f_{m+1} = f$. This is a contradiction to the assumption that 
$\{f_1,f_2, \ldots, f_d \}$ is a basis of $X^{+}$. Hence, the induction 
statement is true for $m+1$ and the theorem is proved.
\end{proof}
As a consequence, we obtain the following convergence of a bounded sequence with
each term belongs to a sequence of the spaces $X_n^{+}$.
\begin{corollary}
 \label{cor:convBoundSeqInXplus}
 Let $(w_n)_{n \in \mathbb N}$ be a sequence with $w_n \in X_n^{+}$ for
each $n \in \mathbb N$. If $\|w_n\|_{L^2(\Omega_n)}$ (or
$\|w_n\|_{X_n^{+}}$) is uniformly bounded, then there exists a subsequence
$w_{n_k}$ such that 
    $w_{n_k} \rightarrow w$
in $L^2(D)$ with the limit $w \in X^{+}$.
\end{corollary}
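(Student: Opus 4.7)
The plan is to reduce the statement to a direct application of Corollary \ref{cor:convFuncMdim} using the basis of $X_n^{+}$ constructed in Theorem \ref{th:basisXnPlus}. First I would check that the two hypotheses on the norms are equivalent: by the uniform version of \eqref{eq:equivNormXminusplus} (valid for $n$ sufficiently large with constants $M_1, M_2$ independent of $n$, as noted in Section \ref{subsec:existInvPert}), we have
\begin{equation*}
  \|w_n\|_{L^2(\Omega_n)} \leq \|w_n\|_{X_n^{+}} \leq M_2 \|w_n\|_{L^2(\Omega_n)},
\end{equation*}
so uniform boundedness in either norm yields uniform boundedness in the $L^2(\Omega_n)$ norm. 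Hence without loss of generality we may assume $\|w_n\|_{L^2(\Omega_n)}$ is uniformly bounded.

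Next I would invoke Theorem \ref{th:basisXnPlus} to obtain a basis $\{f_{1,n}, \ldots, f_{d,n}\}$ of $X_n^{+}$ (for $n$ large enough) and a basis $\{f_1, \ldots, f_d\}$ of $X^{+}$ such that $f_{j,n} \to f_j$ in $L^2(D)$ for each $j = 1, \ldots, d$. Setting $V_n := X_n^{+}$ and $V := X^{+}$, the hypotheses of Corollary \ref{cor:convFuncMdim} are met, and a direct application yields a subsequence $w_{n_k}$ with $w_{n_k} \to w$ in $L^2(D)$ for some $w \in V = X^{+}$, which is precisely the conclusion.

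There is essentially no serious obstacle here; the corollary is a combination of three ingredients already established, namely the uniform norm equivalence \eqref{eq:equivNormXminusplus}, the construction of a convergent basis of $X_n^{+}$ (Theorem \ref{th:basisXnPlus}), and the finite-dimensional compactness argument of Corollary \ref{cor:convFuncMdim}. The only mild subtlety worth flagging is that the argument must be restricted to $n$ sufficiently large so that $\dim(X_n^{+}) = d$ and the uniform norm constants apply; finitely many initial terms of the sequence can always be absorbed by passing to a subsequence.
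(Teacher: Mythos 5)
Your proof is correct and follows exactly the same route as the paper's one-line proof, which also invokes Corollary \ref{cor:convFuncMdim}, Theorem \ref{th:basisXnPlus}, and the uniform norm equivalence on $X_n^{+}$. You have merely spelled out the details, including the sensible observation that finitely many initial terms can be discarded so that $\dim(X_n^{+}) = d$ and the uniform constants apply.
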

\begin{proof}
The result follows immediately from Corollary \ref{cor:convFuncMdim} and
Theorem \ref{th:basisXnPlus} and the equivalence of norms on $X_n^{+}$. 
\end{proof}
\begin{remark}
The above result implies that there exists a subsequence $w_{n_k}$ such that
$\|w_{n_k}\|_{L^2(\Omega_{n_k})} \rightarrow \|w\|_{L^2(\Omega)}$ but does
not implies $\|w_{n_k}\|_{X_{n_k}^{+}} \rightarrow \|w\|_{X^{+}}$ as
degenerate semigroup only converges uniformly on compact subsets of $(0,\infty)$.
\end{remark}
%
%
%

%
%
%
\subsection{Characterisation of upper and lower semicontinuity} 
We give some equivalent statements for upper and lower 
semicontinuity mentioned in Theorem \ref{th:mainResults} and Theorem \ref{th:mainResults2}. 
We simplify the notations by considering bounded subsets
$W_n, W$ of $L^2(D)$. 

\begin{lemma}[Characterisation of upper semicontinuity]
 \label{lem:upperSemiContCharac}
The following statements are equivalent.
 \begin{itemize}
  \item[\upshape(i)] 
    $\sup_{v \in W_n} \inf_{u \in W} \| v-u \|_{L^2(D)} \rightarrow 0$
   as $n \rightarrow \infty$.
 \item[\upshape(ii)] For any sequence $\{v_n\}_{n \in \mathbb N}$ with 
    $v_n \in W_n$, we have
     $\inf_{u \in W} \|v_n - u\|_{L^2(D)} \rightarrow 0$
     as $n \rightarrow \infty$.  
 \item[\upshape(iii)] For any sequence $\{v_n\}_{n \in \mathbb N}$ with 
    $v_n \in W_n$, if $\{v_{n_k} \}_{k \in \mathbb N}$ is
    a subsequence, then there exist a further subsequence (denoted again by $v_{n_k}$)
    and  a sequence $\{u_{n_k}\}_{k \in \mathbb N}$ with $u_{n_k} \in W$ 
    such that
    $\|v_{n_k} - u_{n_k}\|_{L^2(D)} \rightarrow 0$  as $k \rightarrow \infty$.     
\end{itemize}
\end{lemma}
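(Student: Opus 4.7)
The plan is to establish the equivalences by a cyclic chain of implications (i) $\Rightarrow$ (ii) $\Rightarrow$ (iii) $\Rightarrow$ (i), which is the most economical route and requires no appeal to compactness.

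For (i) $\Rightarrow$ (ii), I would simply observe that for any sequence $\{v_n\}$ with $v_n \in W_n$ we have the trivial bound
\begin{displaymath}
\inf_{u \in W} \|v_n - u\|_{L^2(D)} \leq \sup_{v \in W_n} \inf_{u \in W} \|v - u\|_{L^2(D)},
\end{displaymath}
so the right-hand side going to $0$ forces the left-hand side to do so as well. This step is a one-liner.

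For (ii) $\Rightarrow$ (iii), I take an arbitrary sequence $v_n \in W_n$ and any subsequence $v_{n_k}$. Applying (ii) to the sequence $(v_n)$ gives $\inf_{u \in W} \|v_n - u\|_{L^2(D)} \to 0$, hence along the subsequence as well. For each $k$ I pick $u_{n_k} \in W$ with
\begin{displaymath}
\|v_{n_k} - u_{n_k}\|_{L^2(D)} < \inf_{u \in W}\|v_{n_k} - u\|_{L^2(D)} + \tfrac{1}{k},
\end{displaymath}
which is possible by the definition of infimum (so $W$ need not be closed and the infimum need not be attained). Then $\|v_{n_k} - u_{n_k}\|_{L^2(D)} \to 0$ directly, and in fact no further subsequence extraction is required; the slightly weaker statement in (iii) follows a fortiori.

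For (iii) $\Rightarrow$ (i), which I expect is the only step with any subtlety, I argue by contradiction. If (i) fails, there exist $\varepsilon > 0$ and a subsequence $n_k$ with $\sup_{v \in W_{n_k}} \inf_{u \in W} \|v - u\|_{L^2(D)} \geq \varepsilon$, so for each $k$ I may choose $v_{n_k} \in W_{n_k}$ with $\inf_{u \in W} \|v_{n_k} - u\|_{L^2(D)} \geq \varepsilon/2$. I then extend this to a full sequence $\{v_n\}$ with $v_n \in W_n$ (e.g., picking $v_n$ arbitrarily for indices outside $\{n_k\}$, which is legitimate since $W_n$ is nonempty for those indices of interest; otherwise the statement is vacuous for those $n$). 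Applying (iii) to this sequence and to the subsequence $(v_{n_k})$ produces a further subsequence $v_{n_{k_j}}$ and $u_{n_{k_j}} \in W$ with $\|v_{n_{k_j}} - u_{n_{k_j}}\|_{L^2(D)} \to 0$. But for each $j$ we have $\|v_{n_{k_j}} - u_{n_{k_j}}\|_{L^2(D)} \geq \inf_{u \in W}\|v_{n_{k_j}} - u\|_{L^2(D)} \geq \varepsilon/2$, a contradiction. The main obstacle here is purely bookkeeping, namely making sure the extension to a full sequence is legitimate and that the final bound uses the infimum inequality in the right direction.
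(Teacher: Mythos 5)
Your proof is correct and uses essentially the same ingredients as the paper's: the trivial bound for (i) $\Rightarrow$ (ii), the choice of $\tfrac1k$-approximate minimizers $u_{n_k}\in W$ for (ii) $\Rightarrow$ (iii), and extraction of a ``bad'' subsequence $v_{n_k}\in W_{n_k}$ with $\inf_{u\in W}\|v_{n_k}-u\|$ bounded below by a contradiction/contrapositive argument. The only difference is organizational (a cyclic chain (i) $\Rightarrow$ (ii) $\Rightarrow$ (iii) $\Rightarrow$ (i) rather than the paper's two biconditionals (i) $\Leftrightarrow$ (ii) and (ii) $\Leftrightarrow$ (iii)), which changes nothing of substance.
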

\begin{proof}
The statement (i) $\Rightarrow$ (ii) is clear.
%
For (ii) $\Rightarrow$ (i), we prove by contrapositive. Suppose that (i) fails.
Then 
\begin{displaymath}
\limsup_{n \rightarrow \infty} \left \{
     \sup_{v \in W_n} \inf_{u \in W} \| v-u \|_{L^2(D)} \right \} 
  =: a >0.
\end{displaymath}
We can find a subsequence $n_k \rightarrow \infty$ such that 
  $\sup_{v \in W_{n_k}} \inf_{u \in W} \| v-u \|_{L^2(D)} \rightarrow a$
as $k \rightarrow \infty$. This implies that there exists  $v_{n_k} \in  W_{n_k}$
such that 
\begin{displaymath}
\inf_{u \in W} \| v_{n_k}-u \|_{L^2(D)} > a/2,
\end{displaymath} 
for all $k \in \mathbb N$. Hence, (ii) fails. 

For the statement (ii) $\Leftrightarrow$ (iii), notice first that 
$\inf_{u \in W} \|v_n - u\|_{L^2(D)} \rightarrow 0$ as $n \rightarrow \infty$
 if and only if there exists $u_n \in W$ such that 
$\|v_n -u_n \|_{L^2(D)} \rightarrow 0$ as $n \rightarrow \infty$.  
To see this, we choose $u_n \in W$ such that
$\|v_n - u_n\|_{L^2(D)} < \inf_{u \in W} \|v_n -u\|_{L^2(D)} + 1/n$ for each $n \in \mathbb N$.
Then the forward implication follows. 
The backward implication is clear as  $\inf_{u \in W} \|v_n -u\|_{L^2(D)} < \|v_n - u_n\|_{L^2(D)}$
for all $u_n \in W$. The statement (ii) $\Leftrightarrow$ (iii) 
then simply follows from the above and a standard subsequence characterisation of a limit. 
\end{proof}
By a similar argument, we can state the following lemma.
\begin{lemma}[Characterisation of lower semicontinuity]
\label{lem:lowerSemiContCharac} 
The following statements are equivalent.
 \begin{itemize}
  \item[\upshape(i)] 
    $\sup_{u \in W} \inf_{v \in W_n} \| v-u \|_{L^2(D)} \rightarrow 0$
   as $n \rightarrow \infty$.
 \item[\upshape(ii)] For any sequence $\{u_n\}_{n \in \mathbb N}$ with 
    $u_n \in W$, we have
     $\inf_{v \in W_n} \|v - u_n\|_{L^2(D)} \rightarrow 0$
     as $n \rightarrow \infty$.  
 \item[\upshape(iii)] For any sequence $\{u_n\}_{n \in \mathbb N}$ with 
    $u_n \in W$, if $\{u_{n_k} \}_{k \in \mathbb N}$ is
    a subsequence, then there exist a further subsequence (denoted again by $u_{n_k}$)
    and  a sequence $\{v_{n_k}\}_{k \in \mathbb N}$ with $v_{n_k} \in W_{n_k}$ 
    such that
    $\|v_{n_k} - u_{n_k}\|_{L^2(D)} \rightarrow 0$  as $k \rightarrow \infty$.     
\end{itemize}
\end{lemma}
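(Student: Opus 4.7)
The plan is to imitate the proof of Lemma~\ref{lem:upperSemiContCharac} almost verbatim, since the two lemmas are obtained from one another by swapping the roles of $W$ and $W_n$; the underlying quantifier structure of the suprema and infima is identical once the labels are interpreted in the corresponding way. I would prove (i) $\Rightarrow$ (ii) $\Rightarrow$ (i) as a cycle and then (ii) $\Leftrightarrow$ (iii) separately, exactly as the previous lemma does.

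For (i) $\Rightarrow$ (ii), I would use the trivial pointwise bound: for any sequence $u_n \in W$, one has
\begin{displaymath}
\inf_{v \in W_n} \|v - u_n\|_{L^2(D)} \le \sup_{u \in W} \inf_{v \in W_n} \|v - u\|_{L^2(D)},
\end{displaymath}
and the right-hand side tends to $0$ by (i). For (ii) $\Rightarrow$ (i), I would argue by contrapositive. Assume (i) fails, so that $a := \limsup_{n \to \infty} \sup_{u \in W} \inf_{v \in W_n} \|v - u\|_{L^2(D)} > 0$. Extract a subsequence $n_k$ along which the outer supremum converges to $a$, and for each $k$ pick $u_{n_k} \in W$ with $\inf_{v \in W_{n_k}} \|v - u_{n_k}\|_{L^2(D)} > a/2$. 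Completing this to a full sequence $\{u_n\}_{n \in \mathbb N}$ in $W$ (by choosing the remaining terms arbitrarily in $W$, which is possible provided $W \ne \emptyset$) produces a sequence that violates (ii).

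For (ii) $\Leftrightarrow$ (iii), the key observation is that $\inf_{v \in W_n} \|v - u_n\|_{L^2(D)} \to 0$ is equivalent to the existence of $v_n \in W_n$ with $\|v_n - u_n\|_{L^2(D)} \to 0$; the forward implication is obtained by choosing $v_n \in W_n$ with $\|v_n - u_n\|_{L^2(D)} < \inf_{v \in W_n} \|v - u_n\|_{L^2(D)} + 1/n$, and the reverse implication is immediate from the definition of infimum. Combining this reformulation with the standard subsequence characterisation (a real sequence converges to $0$ iff every subsequence has a further subsequence converging to $0$) yields the equivalence. The argument contains no genuine obstacle; the only mild care needed is in the contrapositive step, where the subsequence witness has to be extended to a full sequence indexed by $\mathbb N$ before invoking (ii).
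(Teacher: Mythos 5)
Your proposal is correct and follows exactly the approach the paper intends: the paper itself gives no separate proof of this lemma, remarking only that it follows "by a similar argument" from Lemma~\ref{lem:upperSemiContCharac}, and your adaptation with $W$ and $W_n$ swapped is precisely that argument. Your explicit note about completing the subsequence $u_{n_k}$ to a full sequence in the contrapositive step is a small but genuine point of care that the paper's proof of the upper semicontinuity version glosses over as well.
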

%

%
%
%
%
\section{Convergence of unstable invariant manifolds}
\label{sec:convUnstaMan}
In this section, we prove upper and lower semicontinuity of local unstable invariant manifolds. 
We first show pointwise convergence of global unstable manifolds 
for the modified systems in Section \ref{subsec:convGlobUnstaModify}. 
Consequently, we prove Theorem \ref{th:mainResults} in Section \ref{subsec:UpLowSemicontUnsta}.
\subsection{Convergence of global unstable manifolds}
\label{subsec:convGlobUnstaModify}
%
Let 
\begin{displaymath}
 Y_n = \{ h \in C(X_n^{+}, X_n^{-}) : h(0) = 0 \text{ and } h \text{ is } \nu^{-1} \text{-Lipschitz } \}.
\end{displaymath}
Then $Y_n$ is a complete metric space with the norm
\begin{equation}
\label{eq:LipsNormYn}
  \|h\|_{\text{Lip}} = \sup_{w \ne 0} \frac{\|h(w)\|_{X_n^{-}}}{\|w\|_{X_n^{+}}}.
\end{equation}
We define $T_{t,n} : Y_n \rightarrow Y_n$  for $t \geq 0$ by
  $T_{t,n}(h) = \tilde{h}$
where $\tilde{h} \in Y_n$ such that  $\graph(\tilde{h})=\Phi_{t,n}(\graph(h))$.
Fix $t > 0$ sufficiently large such that
\begin{equation}
 \label{eq:lipsConstTtN}
  K :=  \nu (\nu -\mu)^{-1} \exp ((\alpha - \beta + \varepsilon (2+ \mu +\nu^{-1}))t) <1.
\end{equation} 
As in Theorem \ref{th:globalUnstaMan}, $T_{t,n}$ is a contraction on
$Y_n$ with a uniform contraction constant $K$ for all $n \in \mathbb N$. 
Moreover, $W_n^{+}$ is a graph of the fixed point $h_n^{+}$ of $T_{t,n}$.
To prove convergence of global unstable manifolds, we
show that the fixed point $h_n^{+}$ of $T_{t,n}$ converges to the fixed point
$h^{+}$ of $T_{t}$. 
\begin{lemma}
\label{lem:MoscoXminus}
 Suppose that Assumption \ref{assum:bddPertMosco} is satisfied. 
 For every  $v \in X^{-}$, there exists $v_n \in X_n^{-}$ such that
 $v_n \rightarrow v$ in $L^2(D)$. 
\end{lemma}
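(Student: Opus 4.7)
The plan is to construct $v_n \in X_n^{-}$ by first approximating $v$ in the ambient space $L^2(D)$ by a sequence $w_n \in L^2(\Omega_n)$, and then projecting via $P_n^{-} = 1 - P_n^{+}$ so that each projected function lies in $X_n^{-}$. The first step uses density of $H^1_0$ in $L^2$ together with the Mosco condition (M1); the second step uses the norm convergence \eqref{eq:convProplus} of the spectral projections together with the fact that $P^{+} v = 0$.

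First I would establish the auxiliary statement that for every $u \in L^2(\Omega)$ there exists $w_n \in L^2(\Omega_n)$ with $w_n \to u$ in $L^2(D)$. Since $H^1_0(\Omega)$ is dense in $L^2(\Omega)$, and since (M1) provides, for each $\tilde u \in H^1_0(\Omega)$, a sequence $\tilde u_n \in H^1_0(\Omega_n)$ converging to $\tilde u$ in $H^1(D)$ and hence in $L^2(D)$, a standard diagonal argument yields a sequence $w_n \in H^1_0(\Omega_n) \subset L^2(\Omega_n)$ with $w_n \to u$ in $L^2(D)$. Applied to $u = v \in X^{-} \subset L^2(\Omega)$, this produces $w_n \in L^2(\Omega_n)$ with $w_n \to v$ in $L^2(D)$.

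Next I would set $v_n := (1 - P_n^{+}) w_n$, which lies in $X_n^{-}$ (viewed in $L^2(D)$ via trivial extension), and verify $v_n \to v$ in $L^2(D)$. Since $v \in X^{-}$ we have $P^{+} v = 0$, so continuity of $P^{+}$ on $L^2(D)$ together with $w_n \to v$ gives $P^{+} w_n \to 0$ in $L^2(D)$. The norm convergence $P_n^{+} \to P^{+}$ in $\mathscr{L}(L^2(D))$ from \eqref{eq:convProplus}, combined with the boundedness of $(w_n)$ in $L^2(D)$, yields $\|P_n^{+} w_n - P^{+} w_n\|_{L^2(D)} \to 0$, whence $P_n^{+} w_n \to 0$ in $L^2(D)$. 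Therefore $v_n = w_n - P_n^{+} w_n \to v$ in $L^2(D)$, as required. The argument is routine given the tools already assembled; the only point to watch is that one genuinely needs the operator-norm convergence in \eqref{eq:convProplus} (not merely strong convergence of the $P_n^{+}$) in order to control $P_n^{+}$ along the moving sequence $(w_n)$ without passing through the non-uniform renormings $\|\cdot\|_{X_n^{\pm}}$.
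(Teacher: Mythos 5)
Your proposal is correct and follows essentially the same route as the paper: approximate $v$ via density of $H^1_0(\Omega)$ and Mosco condition (M1) together with a diagonal argument, then apply $P_n^{-} = 1 - P_n^{+}$ and use the norm convergence \eqref{eq:convProplus} to pass to the limit. The paper packages the final step as an invocation of Lemma \ref{lem:convUsUcUu} (convergence of the spectral components of a strongly convergent sequence), whereas you unfold that lemma's argument directly — splitting $P_n^{+}w_n = (P_n^{+}-P^{+})w_n + P^{+}(w_n - v) + P^{+}v$ and using $P^{+}v = 0$ — but the underlying mechanism is identical.
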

\begin{proof}
 Let $v \in X^{-} \subset L^2(\Omega)$. By the density of
 $H^1_0(\Omega)$ in $L^2(\Omega)$ and Mosco convergence assumption, it follows from a standard
 diagonal procedure that there exists $\xi_n \in
 H^1_0(\Omega_n)$ such that 
   $\xi_{n} \rightarrow v$
 in $L^2(D)$ as $n \rightarrow \infty$. By Lemma \ref{lem:convUsUcUu}, we get
$P_n^{-} \xi_n \rightarrow P^{-} v =  v$ in $L^2(D)$ as $n
\rightarrow \infty$. By taking $v_n := P_n^{-} \xi_n$, the lemma follows. 
\end{proof}
Let us define $h \in Y$ by
\begin{equation}
\label{eq:hForFixPt}
 h(w) := \frac{1}{C} h^{+}(w),
\end{equation} 
for all $w \in X^{+}$ where $C$ is a positive constant satisfying
\begin{equation}
\label{eq:constCHplus}
 \|P^{+}\|\|1-P_n^{+}\|M_1 M_2 \leq C,
\end{equation}
for all $n \in \mathbb N$.
Note that although $(1-P_n^{+})$ does not converge to $(1-P^{+})$
in $\mathscr L(L^2(D))$ under the operator norm, we use $\|1-P_n^{+}\| \leq 1 + \|P_n^{+}\|$
and \eqref{eq:convProplus} to obtain a bound $C$ above.  
%
%
%
%

In the next lemma, we obtain an approximation of  $h$ by functions in $Y_n$.
\begin{lemma}
\label{lem:hNtoH}
 Let $h$ be as in \eqref{eq:hForFixPt}. There exists a sequence
 $\{h_n\}$ with $h_n \in Y_n$ for each $n \in \mathbb N$ such that
 \begin{itemize}
 \item[\upshape(i)] $h_n (P_n^{+} u|_{\Omega_n}) \rightarrow h (P^{+} u|_{\Omega})$ 
        in $L^2(D)$  as $n \rightarrow \infty$ for all $u \in L^2(D)$
 \item[\upshape(ii)] for each $m \in \mathbb N$, we have  $T_{t,n}^m(h_n) (P_n^{+} u|_{\Omega_n})
        \rightarrow T_{t}^m(h) (P^{+} u|_{\Omega})$ in $L^2(D)$ as $n \rightarrow \infty$
         for all $u  \in L^2(D)$.  
 \end{itemize}  
\end{lemma}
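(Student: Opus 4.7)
My plan is to define $h_n$ by transporting $h$ to $X_n^-$ using restriction/extension and the projection $P_n^- = 1 - P_n^+$. Specifically, for $w \in X_n^+$ I set
\begin{equation*}
  h_n(w) := (1 - P_n^+)\bigl(h(P^+(w|_\Omega))|_{\Omega_n}\bigr).
\end{equation*}
The $1/C$ factor built into $h$ via \eqref{eq:hForFixPt} is calibrated, through \eqref{eq:constCHplus}, exactly so that (using the renorming equivalences \eqref{eq:equivNormXminusplus} and the uniform bound $\|1-P_n^+\| \leq 1 + \|P_n^+\|$ that \eqref{eq:convProplus} provides) the Lipschitz constant of this $h_n$ with respect to $\|\cdot\|_{X_n^\pm}$ is at most $\nu^{-1}$. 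Combined with $h_n(0)=0$ and continuity, this places $h_n \in Y_n$.

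For (i), fix $u \in L^2(D)$ and set $w_n := P_n^+(u|_{\Omega_n}) \in X_n^+$, $w := P^+(u|_\Omega) \in X^+$. Norm convergence $P_n^+ \to P^+$ in $\mathscr L(L^2(D))$ yields $w_n \to w$ in $L^2(D)$, hence $P^+(w_n|_\Omega) \to w$ in $L^2(\Omega)$, and Lipschitz continuity of $h$ gives $\psi_n := h(P^+(w_n|_\Omega)) \to \psi := h(w)$ in $L^2(D)$. Decomposing $h_n(w_n) = \psi_n\mathbf{1}_{\Omega_n} - P_n^+(\psi_n\mathbf{1}_{\Omega_n})$, the second term equals $P_n^+\psi_n$ (since $P_n^+$ factors through restriction to $\Omega_n$) and converges to $P^+\psi = 0$ by norm convergence of $P_n^+$ combined with $\psi \in X^-$. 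The first term tends to $\psi$ in $L^2(D)$ provided $\int_{\Omega\setminus\Omega_n}|\psi|^2\,dx \to 0$; this follows from Mosco convergence once one knows $\psi \in H^1_0(\Omega)$, which in turn comes from the regularizing action of the semiflow built into the fixed point $h^+$ (points on the unstable manifold are forward-flow images of earlier orbit points, and the analytic semigroup $S(t)$ is smoothing).

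Part (ii) is proved by induction on $m$, with $m=0$ being (i). Setting $h_n^{(m)} := T_{t,n}^m(h_n)$, the $(m{+}1)$st iterate is given implicitly by
\begin{equation*}
   T_{t,n}^{m+1}(h_n)(W_n) = P_n^-\Phi_{t,n}\bigl(\tilde W_n + h_n^{(m)}(\tilde W_n)\bigr), \qquad P_n^+\Phi_{t,n}\bigl(\tilde W_n + h_n^{(m)}(\tilde W_n)\bigr) = W_n,
\end{equation*}
with $\tilde W_n \in X_n^+$ unique, $W_n = P_n^+(u|_{\Omega_n}) \to W = P^+(u|_\Omega)$, and analogously in the limit. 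I would first use the uniform Lipschitz invertibility of $\tilde w \mapsto P_n^+\Phi_{t,n}(\tilde w + h_n^{(m)}(\tilde w))$, inherited from the contraction constant $K<1$ in \eqref{eq:lipsConstTtN}, to establish uniform boundedness of $\tilde W_n$; Corollary \ref{cor:convBoundSeqInXplus} then extracts a subsequence $\tilde W_{n_k} \to \tilde W^\ast \in X^+$ in $L^2(D)$. The inductive hypothesis, Theorem \ref{th:convSolSemiLinParaDir} and Lemma \ref{lem:convUsUcUu} let me pass to the limit in the implicit relation, identifying $\tilde W^\ast$ with the corresponding point $\tilde W$ for the limit problem; the subsequence principle (Lemma \ref{lem:upperSemiContCharac}(iii)) then upgrades to full-sequence convergence, and substituting back yields (ii).

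The main obstacle is this passage to the limit in the implicit defining relation for $\tilde W_n$: the function $h_n^{(m)}$ is known only pointwise along converging arguments, the semiflows $\Phi_{t,n}$ act on different Hilbert spaces $L^2(\Omega_n)$, and the renormed norms $\|\cdot\|_{X_n^\pm}$ defined in \eqref{eq:normXminusplus} involve suprema over non-compact time intervals, which generally do not converge. The resolution must carefully blend the finite-dimensionality of $X_n^+$ (via the basis convergence of Theorem \ref{th:basisXnPlus}), the uniform contraction constant \eqref{eq:lipsConstTtN}, and the compact-in-time convergence of the semiflows provided by Theorem \ref{th:convSolSemiLinParaDir}.
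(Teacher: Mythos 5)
Your choice of $h_n$ is exactly the paper's, and the overall split-and-induct structure matches. But both (i) and (ii) contain gaps that the paper's argument is specifically engineered to avoid.

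In (i), after decomposing $h_n(w_n)=\psi_n\mathbf{1}_{\Omega_n}-P_n^{+}(\psi_n|_{\Omega_n})$ you are left needing $\int_{\Omega\setminus\Omega_n}|\psi|^2\,dx\to 0$, and you assert this ``follows from Mosco convergence once one knows $\psi\in H^1_0(\Omega)$,'' invoking a parabolic-smoothing argument for the fixed point $h^{+}$ that you never carry out. That regularity claim is both unproven and unnecessary: the convergence $\int_{\Omega\setminus\Omega_n}|\psi|^2\,dx\to 0$ holds for \emph{every} $\psi\in L^2(\Omega)$ under (M1), by approximating $\psi$ in $L^2$ by $\phi\in H^1_0(\Omega)$ and then using the Mosco recovery sequence for $\phi$. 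This is precisely what the paper packages into Lemma \ref{lem:MoscoXminus}, which produces $v_n\in X_n^{-}$ with $v_n\to h^{+}(P^{+}u|_\Omega)$ in $L^2(D)$; the second term of \eqref{eq:splitConvAppHnH} is then controlled by inserting $v_n$ and using $(1-P_n^{+})v_n=v_n$ plus the uniform bound on $\|1-P_n^{+}\|$. You should drop the regularity detour entirely.

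In (ii) you try to run the induction ``backwards'': solve the implicit relation $P_n^{+}\Phi_{t,n}(\tilde W_n+h_n^{(m)}(\tilde W_n))=W_n$, bound $\tilde W_n$, extract a convergent subsequence via Corollary \ref{cor:convBoundSeqInXplus}, identify the limit, and then upgrade via a subsequence principle. This can in principle be made to work, but you are importing three pieces of machinery (a uniform Lipschitz-inverse estimate for $\tilde w\mapsto P_n^{+}\Phi_{t,n}(\tilde w+h_n^{(m)}(\tilde w))$, whose source is \emph{not} the contraction constant $K$ of \eqref{eq:lipsConstTtN} but rather the uniform expansion of the flow on cone-graphs; a passage from the pointwise inductive hypothesis at $P_n^{+}u|_{\Omega_n}$ to convergence along the extracted sequence $\tilde W_{n_k}$, which needs the Lipschitz bound on $h_n^{(m)}$ plus $\|\tilde W_{n_k}-P_{n_k}^{+}\tilde W^{*}|_{\Omega_{n_k}}\|\to 0$; and a sequence-level subsubsequence argument that is not Lemma \ref{lem:upperSemiContCharac}(iii), which concerns sets). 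The paper avoids all of this by running the induction ``forwards'': it fixes $w_0\in X^{+}$ with $\Phi_t(w_0\oplus T_t^k(h)(w_0))=w\oplus T_t^{k+1}(h)(w)$, transports to $w_{0,n}:=P_n^{+}w_0|_{\Omega_n}$, applies the induction hypothesis at $w_0$ and Theorem \ref{th:convSolSemiLinParaDir} to get $\xi_n\oplus T_{t,n}^{k+1}(h_n)(\xi_n)\to w\oplus T_t^{k+1}(h)(w)$ directly, and then corrects $\xi_n$ to $w_n$ via $\xi_n-w_n\to 0$ and the $\nu^{-1}$-Lipschitz bound on $T_{t,n}^{k+1}(h_n)$. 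No boundedness estimate, no compactness, and no subsequence extraction are needed.
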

\begin{proof}
 We construct $h_n \in Y_n$ as follows.
 Define $h_n : X_n^{+}\rightarrow X_n^{-}$ by
\begin{equation}
 \label{eq:hNapprox}
   h_n(w) := \frac{1}{C}  (1-P_n^{+})\; \big ( h^{+} (P^{+} w|_{\Omega}) \big )|_{\Omega_n},
\end{equation}
for $w \in X_n^{+}$. It is clear that $h_n(0) = 0$. Moreover, for $w_1,
w_2 \in X_n^{+}$, it follows from the Lipschitz continuity of $h^{+}$ 
and the choice of $C$ in \eqref{eq:constCHplus} that 
\begin{displaymath}
\begin{aligned}
 &\|h_n(w_1) - h_n(w_2)\|_{X_n^{-}} \\
 &\quad \leq M_1  \Big\| \frac{1}{C} (1-P_n^{+})\; \big ( h^{+} (P^{+} w_1|_{\Omega}) \big )|_{\Omega_n} 
      -\frac{1}{C}  (1-P_n^{+}) \; \big ( h^{+} (P^{+} w_2|_{\Omega}) \big )|_{\Omega_n} \Big \|_{L^2(\Omega_n)}
      \\
 &\quad \leq  M_1 \frac{1}{C} \|1-P_n^{+}\ \| \; 
       \| h^{+} (P^{+} w_1|_{\Omega}) -  h^{+} (P^{+} w_2|_{\Omega})\|_{X^{-}}\\
 &\quad \leq M_1  \frac{1}{C} \nu^{-1}  \| 1-P_n^{+}\| \;
          \|P^{+} w_1|_{\Omega} -   P^{+} w_2|_{\Omega}\|_{X^{+}} \\ 
 &\quad \leq M_1 \frac{1}{C}  \nu^{-1} M_2 \|1-P_n^{+} \| \;  \|P^{+}\| \;
                \|w_1 -w_2\|_{L^2(\Omega_n)} \\
 &\quad \leq \nu^{-1}  \|w_1 -w_2\|_{X_n^{+}}.
\end{aligned}  
\end{displaymath}
Hence, $h_n$ is $\nu^{-1}-$Lipschitz and thus $h_n \in Y_n$. 
Note that we need to be careful about the norm used in the above calculation.
In particular, we take care of the equivalence of norms on $X^{-}$ and $X^{+}$ given in 
\eqref{eq:equivNormXminusplus}.
This will be applied throughout the paper.  

We claim that $h_n$ defined above satisfies the properties (i) and (ii).
For (i), let $u \in L^2(D)$ be arbitrary.  By Lemma \ref{lem:MoscoXminus}, there exists 
$(v_n)_{n \in \mathbb N}$ with $v_n \in X_n^{-}$ such that 
\begin{equation}
 \label{eq:vNapproxHplus}
   v_n \rightarrow  h^{+} (P^{+} u|_{\Omega})
\end{equation}
in $L^2(D)$ as $n \rightarrow \infty$. 
We have from the triangle inequality that 
\begin{equation}
\label{eq:splitConvAppHnH}
 \begin{aligned}
  &\| h_n (P_n^{+} u|_{\Omega_n}) -  h (P^{+} u|_{\Omega}) \|_{L^2(D)}\\
   &\quad = \Big \| \frac{1}{C} (1-P_n^{+}) \;
          \Big (h^{+} \big (P^{+} (P_n^{+} u|_{\Omega_n})|_{\Omega} \big) \Big ) \Big|_{\Omega_n}
          - \frac{1}{C} h^{+} (P^{+} u|_{\Omega})   \Big \|_{L^2(D)} \\
   &\quad \leq \frac{1}{C}  
          \Big \| (1-P_n^{+}) \; \Big (h^{+} \big (P^{+} (P_n^{+} u|_{\Omega_n})|_{\Omega} \big) \Big ) \Big|_{\Omega_n}
            - (1-P_n^{+})\; (h^{+} (P^{+} u|_{\Omega}))|_{\Omega_n}  \Big \|_{L^2(D)} \\
   &\quad \quad + \frac{1}{C} \left \| (1-P_n^{+}) \; ( h^{+} (P^{+} u|_{\Omega}))|_{\Omega_n}
             -  h^{+} (P^{+} u|_{\Omega})   \right \|_{L^2(D)}. \\
 \end{aligned}
\end{equation}
Using the equivalence of norms on $X^{-}$ and $X^{+}$, we can calculate
\begin{equation}
 \label{eq:firstTAppH}
  \begin{aligned}
  &\frac{1}{C} 
    \Big \| (1-P_n^{+}) \; \Big (h^{+} \big (P^{+} (P_n^{+} u|_{\Omega_n})|_{\Omega} \big) \Big ) \Big|_{\Omega_n}
     - (1-P_n^{+})\; (h^{+} (P^{+} u|_{\Omega}))|_{\Omega_n}  \Big \|_{L^2(D)} \\
  &\quad \leq \frac{1}{C}  \|1-P_n^{+}\|  \;  
    \|  h^{+} \big (P^{+} (P_n^{+} u|_{\Omega_n})|_{\Omega} \big) - h^{+} (P^{+} u|_{\Omega}) \|_{L^2(D)} \\
  &\quad \leq \frac{1}{C} \| 1-P_n^{+}\| \;
      \| h^{+} \big (P^{+} (P_n^{+} u|_{\Omega_n})|_{\Omega} \big) - h^{+} (P^{+} P^{+} u|_{\Omega})\|_{X^{-}} \\
  &\quad \leq \frac{1}{C} \nu^{-1} \| 1-P_n^{+} \| \;
      \| P^{+} (P_n^{+} u|_{\Omega_n})|_{\Omega} - P^{+} P^{+} u|_{\Omega} \|_{X^{+}} \\
  &\quad \leq \frac{1}{C} \nu^{-1} M_2 \| 1-P_n^{+}\| \;
      \| P^{+} \| \; \| P_n^{+} u|_{\Omega_n} -  P^{+} u|_{\Omega} \|_{L^2(D)} \\
  &\quad \rightarrow 0
 \end{aligned}
\end{equation}
as $n \rightarrow \infty$, where we use \eqref{eq:convProplus} and
the boundedness of $\|1-P_n^{+}\|$ in the last step. 
For the second term on the right of \eqref{eq:splitConvAppHnH},
we use \eqref{eq:vNapproxHplus} and $(1-P_n^{+}) v_n = v_n$ to
obtain
\begin{equation}
 \label{eq:secondTAppH}
 \begin{aligned}
  &\frac{1}{C}  \left \| (1-P_n^{+}) \; ( h^{+} (P^{+} u|_{\Omega}))|_{\Omega_n}
             -  h^{+} (P^{+} u|_{\Omega})   \right \|_{L^2(D)} \\
  &\quad \leq \frac{1}{C} \| (1-P_n^{+}) \; ( h^{+} (P^{+} u|_{\Omega}))|_{\Omega_n} - v_n \|_{L^2(D)}\\
        &\quad \quad  + \frac{1}{C} \| v_n - h^{+} (P^{+} u|_{\Omega})   \|_{L^2(D)}\\
  &\quad \leq \frac{1}{C} \| 1-P_n^{+}\| \; \| h^{+} (P^{+} u|_{\Omega}) -  v_n \|_{L^2(D)} \\
        &\quad \quad  + \frac{1}{C} \| v_n  - h^{+} (P^{+} u|_{\Omega})   \|_{L^2(D)}\\ 
  &\quad \rightarrow 0
 \end{aligned}
\end{equation}
as $n \rightarrow \infty$. It follows from \eqref{eq:splitConvAppHnH} -- \eqref{eq:secondTAppH} that 
\begin{displaymath}
 \| h_n (P_n^{+} u|_{\Omega_n}) -  h (P^{+} u|_{\Omega}) \|_{L^2(D)} \rightarrow 0
\end{displaymath} 
as $n \rightarrow \infty$. Since the above argument is valid for any $u \in L^2(D)$, 
statement (i) follows.

We next prove (ii) by induction on $m \in \mathbb N$. By part (i) of this
proof, the property (ii) is true for $m=0$. For induction step, assume that
\begin{displaymath}
 T_{t,n}^m(h_n) (P_n^{+} u|_{\Omega_n}) \rightarrow T_{t}^m(h) (P^{+} u|_{\Omega})
\end{displaymath}
in $L^2(D)$ as $n \rightarrow \infty$ for all $u \in L^2(D)$ holds true for $m =
0,1, \ldots, k$. We need to show that 
\begin{equation}
\label{eq:inductionFlow}
 T_{t,n}^{k+1}(h_n) (P_n^{+} u|_{\Omega_n}) \rightarrow T_{t}^{k+1}(h) (P^{+} u|_{\Omega})
\end{equation}
in $L^2(D)$ as $n \rightarrow \infty$ for all $u \in L^2(D)$.
Let $u \in L^2(D)$ be arbitrary. We set $w:=  P^{+} u|_{\Omega} \in X^{+}$ and 
$w_n:=  P_n^{+} u|_{\Omega_n} \in X_n^{+}$. It follows from \eqref{eq:convProplus} that
\begin{equation}
\label{eq:wNconv}
   w_n \rightarrow w
\end{equation}
in $L^2(D)$ as $n \rightarrow \infty$.
Since $\text{graph}(T_t^{k+1} (h)) =
\Phi_t(\text{graph}(T_t^{k} (h))$, there exists $w_0 \in X^{+}$ such
that
\begin{displaymath}
 \Phi_t \left( w_0 \oplus T_t^k (h)(w_0) \right ) = w \oplus  T_t^{k+1} (h)(w).
\end{displaymath}
For each $n \in \mathbb N$, we define $w_{0,n} := P_n^{+} w_0|_{\Omega_n}$. 
Again, by \eqref{eq:convProplus}, we have  $w_{0,n} \rightarrow w_0$
in $L^2(D)$ as $n \rightarrow \infty$. Moreover, by the induction hypothesis, 
\begin{displaymath}
 T_{t,n}^k(h_n) (w_{0,n}) = T_{t,n}^k(h_n) (P_n^{+} w_0|_{\Omega_n}) 
                            \rightarrow T_{t}^k(h) (P^{+} w_0)
                            =  T_{t}^k(h) (w_0)
\end{displaymath}
in $L^2(D)$ as $n \rightarrow \infty$. Hence, it follows from 
\eqref{eq:convSolUsingSemiflow} that
\begin{displaymath}
 \Phi_{t,n} \left( w_{0,n} \oplus T_{t,n}^k(h_n) (w_{0,n}) \right )
 \rightarrow \Phi_{t} \left( w_{0} \oplus T_{t}^k(h) (w_{0}) \right )
 = \left (w \oplus  T_t^{k+1} (h)(w) \right )
\end{displaymath}
in $L^2(D)$ as $n \rightarrow \infty$. Since $\text{graph}(T_{t,n}^{k+1} (h_n)) =
\Phi_{t,n}(\text{graph}(T_{t,n}^{k} (h_n))$, there exists $\xi_n \in X_n^{+}$ 
such that 
\begin{displaymath}
 \Phi_{t,n} \left( w_{0,n} \oplus T_{t,n}^k (h_n)(w_{0,n}) \right ) = \xi_n \oplus  T_{t,n}^{k+1} (h_n)(\xi_n),
\end{displaymath}
for each $n \in \mathbb N$. Hence, 
\begin{equation}
\label{eq:flowXInW}
     \xi_n \oplus  T_{t,n}^{k+1} (h_n)(\xi_n) 
 \rightarrow   w \oplus  T_t^{k+1} (h)(w) 
\end{equation}
in $L^2(D)$ as $n \rightarrow \infty$.
%
%
By Lemma \ref{lem:convUsUcUu}, it follows from \eqref{eq:flowXInW}
that
\begin{equation}
\label{eq:PplusXInW}
    \xi_n \rightarrow w
\end{equation}
and 
\begin{equation}
\label{eq:PminusXInW}
   T_{t,n}^{k+1} (h_n)(\xi_n) 
 \rightarrow    T_t^{k+1} (h)(w) 
\end{equation}
in $L^2(D)$ as $n \rightarrow \infty$. We obtain from \eqref{eq:wNconv} and
\eqref{eq:PplusXInW} that
  $\| \xi_n - w_n \|_{L^2(D)} \rightarrow 0$
as $n \rightarrow \infty$. Since $T_{t,n}^{k+1}(h_n)$ is $\nu^{-1}$-Lipschitz,
it follows that
\begin{equation}
\label{eq:TmapXInW}
\begin{aligned}
  \left \| T_{t,n}^{k+1}(h_n) (\xi_n) - T_{t,n}^{k+1}(h_n) (w_n) \right \|_{L^2(\Omega_n)}
  & \leq \left \| T_{t,n}^{k+1}(h_n) (\xi_n) - T_{t,n}^{k+1}(h_n) (w_n) \right \|_{X_n^{-}} \\
  & \leq \nu^{-1} \| \xi_n - w_n  \|_{X_n^{+}} \\
  & \leq \nu^{-1} M_2 \| \xi_n - w_n  \|_{L^2(\Omega_n)} \\
  & \rightarrow 0 
\end{aligned}
\end{equation}
as $n \rightarrow \infty$. By definitions of $w_n$ and $w$ together with \eqref{eq:PminusXInW} and 
\eqref{eq:TmapXInW}, we conclude that
\begin{displaymath}
 \begin{aligned}
  &\left \| T_{t,n}^{k+1}(h_n) (P_n^{+} u|_{\Omega_n}) -  T_{t}^{k+1}(h) (P^{+} u|_{\Omega}) \right \|_{L^2(D)} \\
  & \quad = \left \| T_{t,n}^{k+1}(h_n) (w_n) - T_{t}^{k+1}(h) (w) \right \|_{L^2(D)} \\
  & \quad \leq \left \| T_{t,n}^{k+1}(h_n) (w_n) - T_{t,n}^{k+1}(h_n) (\xi_n) \right \|_{L^2(D)} \\
  & \quad \quad  + \left \| T_{t,n}^{k+1}(h_n) (\xi_n)  - T_{t}^{k+1}(h) (w) \right \|_{L^2(D)} \\
  & \quad \rightarrow 0
 \end{aligned}
\end{displaymath}
as $n \rightarrow \infty$. As $u \in L^2(D)$ was arbitrary, we have shown \eqref{eq:inductionFlow}.
\end{proof}
%
%
%
%
%
%
%
We prove the pointwise convergence of global unstable invariant manifolds in the
following theorem.  
\begin{theorem}
\label{th:ptwiseConvGlobUnstMan}
Assume that all assumptions in Theorem \ref{th:mainResults} are satisfied
and $H^1_0(\Omega_n)$ converges to $H^1_0(\Omega)$ in the sense of Mosco.
Then we have
\begin{displaymath}
   h_n^{+} (P_n^{+} u|_{\Omega_n}) \rightarrow  h^{+} (P^{+} u|_{\Omega})
\end{displaymath}
in $L^2(D)$ as $n \rightarrow \infty$ for all $u \in L^2(D)$.
\end{theorem}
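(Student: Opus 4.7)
The plan is to exploit that $T_{t,n}$ is a contraction on $Y_n$ with constant $K<1$ that is uniform in $n$ (see \eqref{eq:lipsConstTtN}), combined with Lemma \ref{lem:hNtoH}, which supplies an approximating sequence $h_n \in Y_n$ of $h = C^{-1} h^+$ whose iterates under $T_{t,n}$ track those of $T_t(h)$ pointwise in the limit. A three-term $\varepsilon/3$ decomposition then yields the desired pointwise convergence.

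Fix $u \in L^2(D)$ and set $w := P^{+} u|_{\Omega}$, $w_n := P_n^{+} u|_{\Omega_n}$. By \eqref{eq:convProplus}, $w_n \rightarrow w$ in $L^2(D)$; in particular $\|w_n\|_{L^2(\Omega_n)}$ is uniformly bounded. For each $m \in \mathbb N$, decompose
\begin{displaymath}
\|h_n^{+}(w_n) - h^{+}(w)\|_{L^2(D)} \leq A_n(m) + B_n(m) + C(m),
\end{displaymath}
where
\begin{align*}
A_n(m) &:= \|h_n^{+}(w_n) - T_{t,n}^m(h_n)(w_n)\|_{L^2(D)}, \\
B_n(m) &:= \|T_{t,n}^m(h_n)(w_n) - T_t^m(h)(w)\|_{L^2(D)}, \\
C(m)   &:= \|T_t^m(h)(w) - h^{+}(w)\|_{L^2(D)}.
\end{align*}

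Since $h_n, h_n^{+} \in Y_n$ are both $\nu^{-1}$-Lipschitz with $h_n(0) = h_n^{+}(0) = 0$, we have $\|h_n - h_n^{+}\|_{\mathrm{Lip}} \leq 2\nu^{-1}$. Using the fixed-point identity $h_n^{+} = T_{t,n}^m(h_n^{+})$ and the contraction property,
\begin{displaymath}
A_n(m) \leq \|h_n^{+}(w_n) - T_{t,n}^m(h_n)(w_n)\|_{X_n^{-}} \leq K^m \cdot 2\nu^{-1} \|w_n\|_{X_n^{+}} \leq 2 K^m \nu^{-1} M_2 \|w_n\|_{L^2(\Omega_n)},
\end{displaymath}
where the last step uses the uniform norm equivalence \eqref{eq:equivNormXminusplus}. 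Hence $A_n(m) = O(K^m)$ uniformly in $n$; an identical argument, based on $h^{+} = T_t^m(h^{+})$, bounds $C(m)$ by $O(K^m)$. For $B_n(m)$, Lemma \ref{lem:hNtoH}(ii) yields $B_n(m) \rightarrow 0$ as $n \rightarrow \infty$ for each fixed $m$. Given $\varepsilon > 0$, first choose $m$ large so that $A_n(m) + C(m) < 2\varepsilon/3$ for all $n$, then choose $n$ large so that $B_n(m) < \varepsilon/3$. This establishes the claim.

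The only real obstacle is bookkeeping between the several norms ($\mathrm{Lip}$ on $Y_n$, $X_n^{\pm}$, $L^2(\Omega_n)$, $L^2(D)$), which is controlled by the \emph{uniform} equivalence constants $M_1, M_2$ in \eqref{eq:equivNormXminusplus}. The substantive content---that the semiflow convergence of Theorem \ref{th:convSolSemiLinParaDir} propagates through the iterates of $T_{t,n}$---is already packaged in Lemma \ref{lem:hNtoH}(ii), so here we only need to combine it with uniform contraction in a standard way.
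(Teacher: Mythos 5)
Your proposal is correct and follows essentially the same three-term decomposition and $\varepsilon/3$ argument as the paper, relying in the same way on the uniform contraction constant $K$ and on Lemma \ref{lem:hNtoH}(ii) for the middle term. The only minor variation is bookkeeping: you bound $\|h_n^{+} - T_{t,n}^m(h_n)\|_{\mathrm{Lip}} \leq K^m \cdot 2\nu^{-1}$ directly from the fixed-point identity $h_n^{+} = T_{t,n}^m(h_n^{+})$, whereas the paper uses the standard rate-of-convergence estimate $\frac{K^{m}}{1-K}\|h_n - T_{t,n}(h_n)\|_{\mathrm{Lip}}$, which gives the slightly larger constant $\frac{K^m}{1-K}\cdot 2\nu^{-1}$; both tend to zero as $m\to\infty$ and serve the same purpose.
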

\begin{proof}
 Fix $u \in L^2(D)$ and let $\zeta >0$ be arbitrary. We can choose 
 $m_0 \in \mathbb N$ independent of $n$ such that the contraction
 constant $K$ in \eqref{eq:lipsConstTtN} satisfies
\begin{equation}
 \label{eq:m0ForHnPlus}
 \max \left \{ \sup_{n \in \mathbb N} \left \{ \frac{K^{m_0}}{1-K} 2 \nu^{-1} \|P_n^{+} u|_{\Omega_n}\|_{X_n^{+}} \right \},
   \frac{K^{m_0}}{1-K} 2 \nu^{-1} \|P^{+} u|_{\Omega}\|_{X^{+}} \right \}
    \leq \frac{\zeta}{3}. 
\end{equation} 
We take $h_n \in Y_n$ and $h \in Y$ as in Lemma \ref{lem:hNtoH}. Then
by the definition of Lip-norm on $Y$ and $Y_n$ (see \eqref{eq:LipsNormY} and 
\eqref{eq:LipsNormYn}, respectively), we see that
\begin{equation}
 \label{eq:splitHnPlusHPlus}
 \begin{aligned}
 &\left \| h_n^{+} (P_n^{+} u|_{\Omega_n}) -  h^{+} (P^{+} u|_{\Omega}) \right \|_{L^2(D)} \\ 
 &\quad \leq \left \| h_n^{+} (P_n^{+} u|_{\Omega_n}) - T_{t,n}^{m_0} (h_n) (P_n^{+} u|_{\Omega_n}) \right \|_{X_n^{-}} \\
 &\quad \quad +\left \| T_{t,n}^{m_0} (h_n) (P_n^{+} u|_{\Omega_n}) - T_{t}^{m_0} (h) (P^{+} u|_{\Omega}) \right \|_{L^2(D)} \\
 &\quad \quad +\left \| T_{t}^{m_0} (h) (P^{+} u|_{\Omega})- h^{+} (P^{+} u|_{\Omega}) \right \|_{X^{-}} \\ 
 &\quad \leq  \| h_n^{+} -  T_{t,n}^{m_0} (h_n)\|_{\text{Lip}} \| P_n^{+} u|_{\Omega_n} \|_{X_n^{+}} \\
 &\quad \quad +\left \| T_{t,n}^{m_0} (h_n) (P_n^{+} u|_{\Omega_n}) - T_{t}^{m_0} (h) (P^{+} u|_{\Omega}) \right \|_{L^2(D)} \\
 &\quad \quad +\| T_{t}^{m_0} (h) -  h^{+} \|_{\text{Lip}} \| P^{+} u|_{\Omega} \|_{X^{+}},
 \end{aligned}
\end{equation}
for all $n \in \mathbb N$.
By an elementary result on the rate of convergence to the fixed point of
a contraction mapping (see e.g. \cite[Remark 1.2.3 (ii)]{MR0467717}), we have
\begin{equation}
 \label{eq:rateTm0H}
  \| h^{+} - T_{t}^{m_0}(h) \|_{\text{Lip}} 
  \leq \frac{K^{m_0}}{1-K} \| h - T_{t}(h)\|_{\text{Lip}}
  \leq \frac{K^{m_0}}{1-K} 2 \nu^{-1}
\end{equation} 
and
\begin{equation}
 \label{eq:rateTm0Hn}
  \| h_n^{+} - T_{t,n}^{m_0}(h_n) \|_{\text{Lip}} 
  \leq \frac{K^{m_0}}{1-K} \| h_n - T_{t,n}(h_n)\|_{\text{Lip}}
  \leq \frac{K^{m_0}}{1-K} 2 \nu^{-1},
\end{equation} 
for all $n \in \mathbb N$. Moreover, Lemma \ref{lem:hNtoH} (ii) implies that
there exists $N_0 \in \mathbb N$ such that 
\begin{equation}
 \label{eq:HnHepsilon}
 \left \| T_{t,n}^{m_0} (h_n) (P_n^{+} u|_{\Omega_n}) -  T_{t}^{m_0} (h) (P^{+} u|_{\Omega}) \right \|_{L^2(D)} 
 \leq \frac{\zeta}{3},
\end{equation}
for all $n > N_0$. It follows from \eqref{eq:splitHnPlusHPlus} -- \eqref{eq:HnHepsilon} that
\begin{displaymath}
\begin{aligned}
 \left \|  h_n^{+} (P_n^{+} u|_{\Omega_n}) -  h^{+} (P^{+} u|_{\Omega}) \right \|_{L^2(D)}
 &\leq \frac{K^{m_0}}{1-K} 2 \nu^{-1} \| P_n^{+} u|_{\Omega_n} \|_{X_n^{+}}
   + \frac{\zeta}{3} \\
 &\quad  + \frac{K^{m_0}}{1-K} 2 \nu^{-1} \| P^{+} u|_{\Omega} \|_{X^{+}},
\end{aligned}
\end{displaymath}
for all $n > N_0$. By our choice of $m_0$ in \eqref{eq:m0ForHnPlus},
we conclude that 
\begin{displaymath}
 \left \| h_n^{+} (P_n^{+} u|_{\Omega_n}) -  h^{+} (P^{+} u|_{\Omega}) \right \|_{L^2(D)}
 \leq \zeta, 
\end{displaymath}
for all $n > N_0$. As $\zeta > 0$ was arbitrary, we get
  $h_n^{+} (P_n^{+} u|_{\Omega_n}) \rightarrow  h^{+} (P^{+} u|_{\Omega})$
in $L^2(D)$ as $n \rightarrow \infty$. Since this argument works for any
$u \in L^2(D)$, the statement of the theorem follows.
\end{proof}
\subsection{Upper and lower semicontinuity of local unstable manifolds}
\label{subsec:UpLowSemicontUnsta}
We are now in the position to prove Theorem \ref{th:mainResults}.
\begin{proof}[Proof of Theorem \ref{th:mainResults} (ii)]
 As discussed at the end of Section \ref{subsec:existInvPert}, there exist $\delta_1$
 and $\delta_2$ such that 
 $W_n^u = W_n^{u}(U_n)$ is a local unstable invariant manifold where $U_n = V_{1,n} \times V_{2,n}$ 
 with $V_{1,n}$ is a ball of radius $\delta_1$ in $X_n^{-}$ 
 and $V_{2,n}$ is a ball of radius $\delta_2$ in $X_n^{+}$ for all $n
 \in \mathbb N$. Moreover, a similar statement holds for the unperturbed
 problem. By the equivalence of norms on $X_n^{-}$ and $X_n^{+}$ with
 uniform parameters $\alpha$ and $\beta$, we can chose $\delta > 0$ such that
 $B_n := B_{L^2(\Omega_n)}(0, \delta) \subset V_{1,n} \times V_{2,n}$
 for all $n \in \mathbb N$ and $B := B_{L^2(\Omega)}(0, \delta) \subset
 V_{1} \times V_{2}$. 
 
 To prove the lower semicontinuity, we show that for every $\zeta > 0$, there exists $N_0 \in \mathbb N$
 independent of $u \in \text{\upshape{graph}}(h^{+})\cap B$ such
 that
 \begin{displaymath}
   \inf_{v \in \text{\upshape{graph}}(h_n^{+})\cap B_n}  \| u - v \|_{L^2(D)}
   < \zeta,
 \end{displaymath}
 for all $n > N_0$ and for all $u \in \text{\upshape{graph}}(h^{+})\cap B$.
  Let $\zeta >0$ be arbitrary. By the Lipschitz continuity of $h^{+}:X^{+}
 \rightarrow X^{-}$ (taking \eqref{eq:equivNormXminusplus} into account),
 we have that
 for every $w_0 \in X^{+}$, there exists  $\rho > 0$ such that 
 \begin{equation}
 \label{eq:LipsContHplusApp}
   \|(w \oplus h^{+}(w)) - (w_0 \oplus h^{+}(w_0))\|_{L^2(\Omega)} < \frac{\zeta}{2},
 \end{equation}
for all $w \in B_{X^{+}}(w_0,\rho) := \{w \in X^{+} :
\|w-w_0\|_{L^2(\Omega)} < \rho \}$. Note that $\rho$ is independent of
$w_0 \in X^{+}$. 
We set
\begin{displaymath}
  W := P^{+} \left (\text{graph}(h^{+}) \cap
       B \right)
    = \{ w \in X^{+}: w \oplus h^{+}(w) \in  B \}.
\end{displaymath}
Since $\dim(X^{+}) < \infty$, the set $\overline{W}$ is compact. 
Hence, we can choose a finite cover
 $\{ B_{X^{+}}(w_k,\rho) : w_k \in W , k = 1,\ldots, m\}$
of $\overline W$ so that  
\begin{equation}
\label{eq:subCoverPplus}
  W \subset \bigcup_{k=1}^{m} B_{X^{+}}(w_k,\rho).
\end{equation}
Denoted by 
$\Delta := \min\{\delta - \|w_k \oplus h^{+}(w_k)\|_{L^2(\Omega)}: k=1,\ldots, m \}$.
Setting $w_{k,n} := P_n^{+}  w_k|_{\Omega_n} \in X_n^{+}$ for $n \in \mathbb N$ and 
$k = 1, \ldots, m$. 
We have from \eqref{eq:convProplus} that $w_{k,n} \rightarrow w_k$ in $L^2(D)$ as $n \rightarrow \infty$
for each $k =1, \ldots,m$. Moreover, by Theorem \ref{th:ptwiseConvGlobUnstMan} 
$h_n^{+}(w_{k,n}) \rightarrow   h^{+}(w_k)$ in $L^2(D)$ as 
$n \rightarrow \infty$ for each $k =1,\ldots,m$.
Hence, we can find $N_0 \in \mathbb N$ such that
\begin{equation}
 \label{eq:estHNplusHplusZetaWk}
 \| (w_{k,n} \oplus h_n^{+}(w_{k,n})) -  (w_k \oplus h^{+}(w_k)) \|_{L^2(D)} 
 < \min\left \{\frac{\zeta}{2}, \Delta \right \},
\end{equation}
for all $n > N_0$ and for all $ k = 1, \ldots, m$.
Using \eqref{eq:estHNplusHplusZetaWk}, we have
\begin{displaymath}
\begin{aligned}
 \|w_{k,n} \oplus h_n^{+}(w_{k,n}) \|_{L^2(\Omega_n)}
 &\leq  \|(w_{k,n} \oplus h_n^{+}(w_{k,n})) -  (w_k \oplus h^{+}(w_k))\|_{L^2(D)}\\
 &\quad  + \| w_k \oplus h_n^{+}(w_k) \|_{L^2(\Omega)} \\
 &<  \| w_k \oplus h_n^{+}(w_k) \|_{L^2(\Omega)} + \Delta \\
 &\leq \| w_k \oplus h_n^{+}(w_k) \|_{L^2(\Omega)} +(\delta - \|w_k \oplus h_n^{+}(w_k) \|_{L^2(\Omega)})\\
 &= \delta,
\end{aligned} 
\end{displaymath}
for all $n > N_0$ and for all $ k = 1, \ldots, m$. Hence,
$w_{k,n} \oplus h_n^{+}(w_{k,n}) \in \text{\upshape{graph}}(h_n^{+})\cap B_n$
for all $n > N_0$ and for all $ k = 1, \ldots, m$.
Let $u$ be in $\text{\upshape{graph}}(h^{+})\cap B$ and write $u = w \oplus h^{+}(w)$
for some $w \in W$. By \eqref{eq:subCoverPplus}, there exists $k \in \{1,\ldots,m\}$
such that $w \in B_{X^{+}}(w_k, \rho)$. It follows from \eqref{eq:LipsContHplusApp}
and \eqref{eq:estHNplusHplusZetaWk} that
\begin{displaymath}
\begin{aligned}
  &\| (w_{k,n} \oplus h_n^{+}(w_{k,n})) -  (w \oplus h^{+}(w)) \|_{L^2(D)} \\
  &\quad \leq  \| (w_{k,n} \oplus h_n^{+}(w_{k,n})) -  (w_k \oplus h^{+}(w_k)) \|_{L^2(D)} \\
  & \quad \quad + \| (w_k \oplus h^{+}(w_k)) - (w \oplus h^{+}(w)) \|_{L^2(D)} \\
  &\quad < \frac{\zeta}{2} + \frac{\zeta}{2} \\
  &\quad = \zeta,  
\end{aligned}
\end{displaymath}
for all $n > N_0$.  Since $w_{k,n} \oplus h_n^{+}(w_{k,n}) \in \text{\upshape{graph}}(h_n^{+})\cap B_n$
for all $n > N_0$, we get
\begin{displaymath}
  \inf_{v \in \text{\upshape{graph}}(h_n^{+})\cap B_n}  \| u - v  \|_{L^2(D)} < \zeta,
\end{displaymath}
for all $n > N_0$. The above estimate holds for every 
$u = w \oplus h^{+}(w) \in \text{graph}(h^{+})\cap B$ and notice that $N_0$ is 
independent of $u$.  
As $\zeta > 0$ was  arbitrary, we obtain the lower semicontinuity.
\end{proof}
Using our characterisation in Lemma \ref{lem:upperSemiContCharac}, we can show the upper 
semicontinuity of unstable invariant manifolds.
\begin{proof}[Proof of Theorem \ref{th:mainResults} (i)]
 We consider the same neighbourhood $B_n$ and $B$ as in the proof above.
%
 Let $\{\xi_n\}_{n \in \mathbb N}$ be a sequence with $\xi_n \in
 \text{\upshape{graph}}(h_n^{+})\cap B_n$ and $(\xi_{n_k})_{k \in \mathbb N}$ 
 be an arbitrary subsequence. We write $\xi_{n_k} := w_{n_k} \oplus
 h_{n_k}^{+}(w_{n_k})$ for some $w_{n_k} \in X_{n_k}^{+}$. Since
 $\|\xi_{n_k}\|_{L^2(\Omega_{n_k})} = \| w_{n_k} \oplus h_{n_k}^{+}(w_{n_k}) \|_{L^2(\Omega_{n_k})} < \delta$
 for all $k \in \mathbb N$, we can apply Corollary \ref{cor:convBoundSeqInXplus} to extract a subsequence
 of $\{w_{n_k}\}_{k \in \mathbb N}$ (indexed again by $n_k$) such that
      $w_{n_k} \rightarrow  w$ 
 in $L^2(D)$ with the limit $w \in X^{+}$. Hence, by the Lipschitz continuity
 of $h_n^{+}$ and Theorem \ref{th:ptwiseConvGlobUnstMan}, we get
 \begin{displaymath}
 \begin{aligned}
  \| h_{n_k}^{+} (w_{n_k}) - h^{+}(w)\|_{L^2(D)} 
  &\leq  \| h_{n_k}^{+} (w_{n_k}) -   h_{n_k}^{+} (P_{n_k}^{+} w|_{\Omega_{n_k}}) \|_{L^2(D)}\\
  & \quad +\| h_{n_k}^{+} (P_{n_k}^{+}  w|_{\Omega_{n_k}})  -   h^{+}(w)\|_{L^2(D)}\\
  & \rightarrow 0
 \end{aligned}
\end{displaymath}
as $k \rightarrow \infty$. If we set $u := w \oplus h^{+}(w) \in \text{graph}(h^{+})$, then
$\xi_{n_k} \rightarrow u$ in $L^2(D)$ as $k \rightarrow
\infty$. Since $\|\xi_{n_k}\|_{L^2(D)} < \delta$ for all $k \in
\mathbb N$, we get $\|u\|_{L^2(D)} \leq \delta$. Hence, $u \in
\text{graph}(h^{+}) \cap \overline{B} = \overline{\text{graph}(h^{+})\cap B}$.
We can find $u_{n_k} \in \text{graph}(h^{+}) \cap B$ such
that $u_{n_k} \rightarrow u$ in $L^2(\Omega)$ as $k \rightarrow \infty$.
Therefore,
\begin{displaymath}
 \| \xi_{n_k} - u_{n_k}\|_{L^2(D)} 
 \leq  \| \xi_{n_k} - u\|_{L^2(D)} + \| u - u_{n_k}\|_{L^2(D)}
  \rightarrow 0
\end{displaymath}
as $k \rightarrow \infty$.
By Lemma \ref{lem:upperSemiContCharac}, the statement in Theorem \ref{th:mainResults} (i)
follows.
\end{proof} 
\section{Convergence of stable invariant manifolds}
\label{sec:convStaMan}

Recall that the local stable manifold is a graph of Lipschitz function
$h^{-}: X^{-} \rightarrow X^{+}$ inside a suitable product neightbourhood
of $0 \in L^2(\Omega)$ determined by
the modification in the construction (Theorem \ref{th:locStaMan}). 
In this section, we prove the upper and lower semicontinuity of local stable invariant
manifolds with the following modification. 

 Fix the renorming of $X_n^{-}$, $X_n^{+}$, $X^{-}$ and $X^{+}$ 
 (see \eqref{eq:normXminusplus}) using
 the same parameters $\alpha$ and $\beta$ for all $n \in \mathbb N$. 
 By shrinking the neighbourhood (choosing a smaller Lipschitz constant
 $\varepsilon$ for the nonlinear terms $f_n$ and $f$), we can make the 
 following assumption.

\begin{assumption}
 We assume that 
 \begin{equation}
 \label{eq:mu0}
   0 <  \mu_0 < \inf \left \{ \frac{1}{2(\|P^{+}\|+ \|P^{-}\|)},
  \frac{1}{2(\|P_n^{+}\|+ \|P_n^{-}\|)} : n \in \mathbb N \right \}
 \end{equation}
 and
 \begin{equation}
 \label{eq:mu}
  \mu := \frac{\mu_0}{M_1 M_2}
 \end{equation}
  are parameters such that both $\mu_0$ and $\mu$ 
 satisfy the conditions for $\mu$ in \eqref{eq:constMuNu1} and \eqref{eq:constMuNu2}. 
\end{assumption} 

We denote the Lipschitz functions for the modification $\mu_0$ by $\hat{h}^{-}$ and for
the modification $\mu$ by $h^{-}$. Let $U$ be a smaller product neighbourhood of $0$
in $L^2(\Omega)$ such that both modifications agree. Hence, the local stable manifold is  
$W^{s}(U) := \text{graph}(h^{-}) \cap U = \text{graph}(\hat{h}^{-}) \cap U$.
Similarly, for each $n \in \mathbb N$, 
we denote the Lipschitz functions for the modification $\mu_0$ by $\hat{h}_n^{-}$ and for
the modification $\mu$ by $h_n^{-}$.  As discussed at the end of Section \ref{subsec:existInvPert}, 
we can take a uniform product neighbourhood $U_n$ of $0$ in $L^2(\Omega_n)$ 
such that both modifications agree.  Hence, the local stable manifold is  
$W_n^{s}(U_n) := \text{graph}(h_n^{-}) \cap U_n = \text{graph}(\hat{h}_n^{-}) \cap U_n$.
We choose $\delta > 0$  so that $\overline{B} \subset U$ and $\overline{B}_n \subset U_n$, 
where $B:=B_{L^2(\Omega)}(0,\delta)$ and $B_n:=B_{L^2(\Omega_n)}(0,\delta)$. 
Hence, $h^{-} (v) = \hat{h}^{-}(v)$ on $\overline{B}$ 
and $h_n^{-} (v) = \hat{h}_n^{-}(v)$ on $\overline{B}_n$. 
We prove Theorem \ref{th:mainResults2} by taking the balls of radius $\delta$ chosen above.

\begin{lemma}
\label{lem:seqOfGphSmallerBall}
Let $\delta > 0$ and $\zeta_n > 0$ be a sequence with $\zeta_n \rightarrow 0$ 
as $n \rightarrow \infty$. We write $B:=B_{L^2(\Omega)}(0,\delta)$ and $B_n:=B_{L^2(\Omega_n)}(0,\delta)$. 
\begin{itemize}
\item[\upshape{(i)}] If $z_n = y_n \oplus h^{-}(y_n)$ is a sequence 
in $\text{\upshape{graph}}(h^{-})$ with $z_n \in B_{L^2(\Omega)}(0, \delta + \zeta_n)$
for each $n \in \mathbb N$, then there exist a subsequence $z_{n_k}$ and a sequence
$u_{n_k}$ in $\graph(h^{-}) \cap B$
such that
 $\| z_{n_k} - u_{n_k} \|_{L^2(\Omega)} \rightarrow 0$ 
as $k \rightarrow \infty$.
\item[\upshape{(ii)}] If $z_n = y_n \oplus h^{-}(y_n)$ is a sequence 
with $z_n \in \text{\upshape{graph}}(h_n^{-}) \cap B_{L^2(\Omega_n)}(0, \delta + \zeta_n)$
for each $n \in \mathbb N$, then there exist a subsequence $z_{n_k}$ and a sequence
$u_{n_k}$ with $u_{n_k} \in \graph(h_{n_k}^{-}) \cap B_{n_k}$ for each $k \in \mathbb N$
such that
 $\| z_{n_k} - u_{n_k} \|_{L^2(\Omega_{n_k})} \rightarrow 0$ 
as $k \rightarrow \infty$.
\end{itemize}
\end{lemma}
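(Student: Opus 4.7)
The strategy is to pass to a subsequence and approximate each $z_{n_k}$ by a nearby point on the same graph strictly inside the ball, constructed via a radial scaling of the $X^{-}$ (resp.\ $X_n^{-}$) coordinate together with the intermediate value theorem.

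For part (i), I distinguish two cases. If $\|z_n\|_{L^2(\Omega)} < \delta$ for infinitely many $n$, extract that subsequence and take $u_{n_k} := z_{n_k} \in \graph(h^{-}) \cap B$. Otherwise $\|z_n\|_{L^2(\Omega)} \in [\delta, \delta + \zeta_n)$ for $n$ large, so $\|z_n\|_{L^2(\Omega)} \to \delta$. For each such $n$ consider the continuous path $\gamma_n(t) := t y_n \oplus h^{-}(t y_n) \in \graph(h^{-})$, $t \in [0,1]$, with $\gamma_n(0) = 0$ and $\gamma_n(1) = z_n$. From the identity $\gamma_n(t) - tz_n = h^{-}(ty_n) - t\, h^{-}(y_n)$, the Lipschitz bound $\|h^{-}(y)\|_{L^2(\Omega)} \leq \mu M_1\|y\|_{L^2(\Omega)}$ (from \eqref{eq:equivNormXminusplus} and $h^{-}(0) = 0$), together with $\|y_n\|_{L^2(\Omega)} \leq \|P^{-}\|\|z_n\|_{L^2(\Omega)}$, I would derive the quantitative comparison
\begin{displaymath}
\bigl| \|\gamma_n(t)\|_{L^2(\Omega)} - t\|z_n\|_{L^2(\Omega)} \bigr| \leq 2 t\, \mu M_1 \|P^{-}\| \|z_n\|_{L^2(\Omega)}.
\end{displaymath}
The smallness of $\mu_0$ enforced by \eqref{eq:mu0}, which yields $2\mu M_1\|P^{-}\| = 2\mu_0\|P^{-}\|/M_2 < 1$, together with the intermediate value theorem then produces $t_n \in [0,1)$ such that $\|\gamma_n(t_n)\|_{L^2(\Omega)} < \delta$. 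Setting $u_n := \gamma_n(t_n)$, the Lipschitz continuity of $h^{-}$ gives
\begin{displaymath}
\|u_n - z_n\|_{L^2(\Omega)} \leq (1-t_n)(1 + \mu M_1) \|y_n\|_{L^2(\Omega)},
\end{displaymath}
controlling the approximation error.

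Part (ii) follows by the same argument applied to $h_n^{-}$, $B_n$, $P_n^{\pm}$, and $X_n^{\pm}$. The uniformity in $n$ of the constants $\mu$, $M_1$, $M_2$ and of \eqref{eq:mu0} established in Section~\ref{subsec:existInvPert} ensures the estimates hold uniformly and the IVT produces $t_n$ in a uniform range.

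\textbf{Main obstacle.} The delicate step is to choose $t_n$ close enough to $1$ so that $(1 - t_n)\|y_n\|_{L^2(\Omega)} \to 0$ while $\|\gamma_n(t_n)\|_{L^2(\Omega)} < \delta$. A crude IVT choice only guarantees $t_n < 1/(1 + 2\mu M_1\|P^{-}\|)$, which is bounded away from $1$. The fix should exploit that $\|z_n\|_{L^2(\Omega)} \to \delta$: along a suitable subsequence one should be able to take $t_n = 1 - O(\zeta_n)$, combining the exact identity $\psi_n(1) = \|z_n\|_{L^2(\Omega)}$ with the continuity of $\psi_n(t) := \|\gamma_n(t)\|_{L^2(\Omega)}$ near $t = 1$ and a refinement of the upper and lower bounds above. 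This interplay between $\zeta_n$ and the smallness constant $\mu_0$ is where the assumption \eqref{eq:mu0} is used most sharply.
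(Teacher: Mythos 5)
Your strategy is the right one and is in fact the same as the paper's: contract the point $z_n = y_n \oplus h^{-}(y_n)$ radially in the $X^{-}$ coordinate by a factor $1 - O(\zeta_n)$ and follow the graph to get a nearby point strictly inside $B$. The paper does this with an explicit scalar
\begin{displaymath}
 a_{n_k} := 1 - \frac{b\,\zeta_{n_k}}{\|y_{n_k}\|_{L^2(\Omega)} + \|h^{-}(y_{n_k})\|_{L^2(\Omega)}},
\end{displaymath}
with $b$ large enough that $(\|P^{+}\|+\|P^{-}\|)^{-1} - 2\mu_0 > 1/b$; this is exactly the ``$t_n = 1 - O(\zeta_n)$'' you were aiming for, and the assumption \eqref{eq:mu0} enters precisely to make that choice of $b$ possible.

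The gap you flagged is real and is caused by a concretely identifiable weakness in your quantitative inequality. You bounded $\|h^{-}(ty_n) - t\,h^{-}(y_n)\|$ by $\|h^{-}(ty_n)\| + t\|h^{-}(y_n)\| \leq 2t\,\mu M_1\|y_n\|$, using $h^{-}(0)=0$ separately for each term, which does \emph{not} tend to zero as $t \to 1$ and therefore cannot force $t_n \to 1$ via IVT, as you noticed. The fix is to instead write
\begin{displaymath}
 h^{-}(ty_n) - t\,h^{-}(y_n) = \big(h^{-}(ty_n) - h^{-}(y_n)\big) + (1-t)\,h^{-}(y_n),
\end{displaymath}
apply the Lipschitz bound between $ty_n$ and $y_n$ to the first term, and $h^{-}(0)=0$ to the second, yielding
\begin{displaymath}
 \bigl\| \gamma_n(t) - t\,z_n \bigr\|_{L^2(\Omega)} \leq 2\mu M_1 (1-t)\|y_n\|_{L^2(\Omega)} \leq 2\mu M_1 (1-t)\|P^{-}\|\,\|z_n\|_{L^2(\Omega)},
\end{displaymath}
which \emph{does} vanish as $t \to 1$. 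With this, $\|\gamma_n(t)\| \leq \|z_n\|\bigl(1 - (1-t)(1 - 2\mu M_1\|P^{-}\|)\bigr)$, and since $\|z_n\| < \delta + \zeta_n$ one can solve for $t_n$ with $1 - t_n = O(\zeta_n)$ so that $\|\gamma_n(t_n)\| < \delta$; the error estimate $\|z_n - u_n\| \leq (1+\mu M_1)(1-t_n)\|y_n\| \to 0$ then follows since $\|y_n\|$ is bounded. This is precisely the computation the paper carries out in displays \eqref{eq:ApproxNormUnk}--\eqref{eq:ApproxNormUnk2}, where the splitting into $\|h^{-}(a_{n_k}y_{n_k}) - h^{-}(y_{n_k})\|$, $(1-a_{n_k})\|h^{-}(y_{n_k})\|$, and $\|a_{n_k}z_{n_k}\|$ plays exactly the role of the refined triangle inequality above. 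One more caution: the paper is careful to estimate with the renormed $\|\cdot\|_{X^{\pm}}$ norms before converting via \eqref{eq:equivNormXminusplus}, so the constant appearing in the final comparison is $\mu_0 = \mu M_1 M_2$ rather than $\mu M_1$ alone; you should track this to make sure your smallness constant in terms of $\mu_0$ aligns with \eqref{eq:mu0}.
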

\begin{proof}
For assertion (i), using \eqref{eq:mu0} we can fix $b >0$ such that
\begin{equation}
 \label{eq:constB} 
  b > \frac{1}{(\|P^{+}\| + \|P^{-}\|)^{-1} -2 \mu_0}.
\end{equation}
Since $\zeta_n \rightarrow 0$, we can find $N_0 \in \mathbb N$ such that 
$\zeta_n < \delta / b$  for all $n > N_0$. 
We extract a subsequence $\zeta_{n_k}$ so that $\zeta_{n_k} < \delta /b$
for all $k \in \mathbb N$.
Define 
\begin{equation}
\label{eq:constAnk}
 a_{n_k} := 1 - \frac{b \zeta_{n_k}}{\|y_{n_k}\|_{L^2(\Omega)} + \|h^{-}(y_{n_k})\|_{L^2(\Omega)}}, 
\end{equation}
for each $k \in \mathbb N$. By our assumptions, 
$\| z_{n_k}\|_{L^2(\Omega)} =\| y_{n_k} \oplus h^{-}(y_{n_k})\|_{L^2(\Omega)} < \delta + \zeta_{n_k}$
for all $k \in \mathbb N$. If $\| z_{n_k}\|_{L^2(\Omega)} \geq \delta$, then
\begin{displaymath}
 \| y_{n_k}\|_{L^2(\Omega)} + \| h^{-}(y_{n_k})\|_{L^2(\Omega)} 
 \geq \| y_{n_k} \oplus h^{-}(y_{n_k})\|_{L^2(\Omega)} 
 \geq \delta.
\end{displaymath}
Since $\zeta_{n_k} < \delta/b$, if $\| z_{n_k}\|_{L^2(\Omega)} \geq \delta$ we have that
\begin{displaymath}
\frac{b \zeta_{n_k}}{\|y_{n_k}\|_{L^2(\Omega)} + \|h^{-}(y_{n_k})\|_{L^2(\Omega)}} 
< \frac{b (\delta/b)}{\delta}
= 1.
\end{displaymath}
It follows from \eqref{eq:constAnk} that $0 < a_{n_k} \leq 1$ if $\| z_{n_k}\|_{L^2(\Omega)} \geq \delta$.
For each $k \in \mathbb N$, we define $u_{n_k} \in \text{graph}(h^{-})$ by
\begin{equation}
\label{eq:seqUnkInB}
 u_{n_k} := \left \{ 
            \begin{aligned}
             &z_{n_k}  &&\quad \text{ if } \| z_{n_k}\|_{L^2(\Omega)} < \delta \\
             &a_{n_k} y_{n_k} \oplus h^{-}(a_{n_k} y_{n_k})
                       &&\quad \text{ if } \| z_{n_k}\|_{L^2(\Omega)} \geq \delta.
            \end{aligned}
            \right .
\end{equation}
Clearly, $\|z_{n_k} - u_{n_k}\|_{L^2(\Omega)} = 0$ if 
$\| z_{n_k}\|_{L^2(\Omega)} < \delta$. Moreover, if $\| z_{n_k}\|_{L^2(\Omega)} \geq \delta$,
then
\begin{displaymath}
\begin{aligned}
 \|z_{n_k} - u_{n_k}\|_{L^2(\Omega)} 
 &= \| (y_{n_k} \oplus h^{-}(y_{n_k})) - 
   (a_{n_k} y_{n_k} \oplus h^{-}(a_{n_k} y_{n_k}))\|_{L^2(\Omega)} \\
 &\leq \| y_{n_k} - a_{n_k} y_{n_k}\|_{L^2(\Omega)}
    + \|h^{-}(y_{n_k}) - h^{-}(a_{n_k} y_{n_k})\|_{X^{+}} \\
 &\leq \| y_{n_k} - a_{n_k} y_{n_k}\|_{L^2(\Omega)}
    + \mu \|y_{n_k} - a_{n_k} y_{n_k}\|_{X^{-}} \\
 &\leq  (1+\mu M_1) |1-a_{n_k}| \; \| y_{n_k}\|_{L^2(\Omega)} \\
 &\leq  (1+\mu M_1) \frac{b \zeta_{n_k}}{\|y_{n_k}\|_{L^2(\Omega)} + \|h^{-}(y_{n_k})\|_{L^2(\Omega)}}    
         \| y_{n_k}\|_{L^2(\Omega)} \\
 &\leq  (1+\mu M_1) b \zeta_{n_k}.
\end{aligned}
\end{displaymath} 
Hence, $\|z_{n_k} - u_{n_k}\|_{L^2(\Omega)} \leq  (1+\mu M_1) b \zeta_{n_k}$ 
for all $k \in \mathbb N$. As $\zeta_{n_k} \rightarrow 0$, we conclude that
\begin{equation}
 \|z_{n_k} - u_{n_k}\|_{L^2(\Omega)} \rightarrow 0
\end{equation}
as $k \rightarrow \infty$. It remains to show that $u_{n_k} \in B_{L^2(\Omega)}(0,\delta)$
for all $k \in \mathbb N$.  If $\| z_{n_k}\|_{L^2(\Omega)} < \delta$, then
$u_{n_k} \in B_{L^2(\Omega)}(0,\delta)$. 
If $\| z_{n_k}\|_{L^2(\Omega)} \geq \delta$, we can write
\begin{equation}
\label{eq:ApproxNormUnk}
 \begin{aligned}
  \|u_{n_k}\|_{L^2(\Omega)} 
  &\leq \|u_{n_k} - a_{n_k} z_{n_k}\|_{L^2(\Omega)} + \|a_{n_k} z_{n_k}\|_{L^2(\Omega)}\\
  &= \| (a_{n_k} y_{n_k} \oplus h^{-}(a_{n_k} y_{n_k}))
      - a_{n_k} (y_{n_k} \oplus h^{-}(y_{n_k})) \|_{L^2(\Omega)}\\
     &\quad + \|a_{n_k} z_{n_k}\|_{L^2(\Omega)}\\
  &\leq \| h^{-}(a_{n_k} y_{n_k}) - a_{n_k} h^{-}(y_{n_k}) \|_{L^2(\Omega)}
       + \|a_{n_k} z_{n_k}\|_{L^2(\Omega)}\\
  &\leq \| h^{-}(a_{n_k} y_{n_k}) - h^{-}(y_{n_k})\|_{L^2(\Omega)}
       + \|h^{-}(y_{n_k}) - a_{n_k} h^{-}(y_{n_k}) \|_{L^2(\Omega)} \\
     &\quad + \|a_{n_k} z_{n_k}\|_{L^2(\Omega)}.
 \end{aligned}
\end{equation}
Now, if $\| z_{n_k}\|_{L^2(\Omega)} \geq \delta$, then by the Lipschitz continuity of $h^{-}$ and \eqref{eq:mu} 
\begin{equation}
\label{eq:ApproxUnk1}
 \begin{aligned}
  \| h^{-}(a_{n_k} y_{n_k}) - h^{-}(y_{n_k})\|_{L^2(\Omega)}
  &\leq  \| h^{-}(a_{n_k} y_{n_k}) - h^{-}(y_{n_k})\|_{X^{+}} \\
  &\leq  \mu \| a_{n_k} y_{n_k} - y_{n_k}\|_{X^{-}} \\
  &\leq  \mu M_1 |a_{n_k} -1| \; \|y_{n_k}\|_{L^2(\Omega)} \\
  &= \frac{\mu_0}{M_1 M_2} M_1 
     \frac{b \zeta_{n_k} \|y_{n_k}\|_{L^2(\Omega)} }{\|y_{n_k}\|_{L^2(\Omega)} + \|h^{-}(y_{n_k})\|_{L^2(\Omega)}}  \\
  &\leq \mu_0 b \zeta_{n_k}. 
 \end{aligned}
\end{equation}
Similarly, if $\| z_{n_k}\|_{L^2(\Omega)} \geq \delta$, then
\begin{equation}
\label{eq:ApproxUnk2}
 \begin{aligned}
 \|h^{-}(y_{n_k}) - a_{n_k} h^{-}(y_{n_k}) \|_{L^2(\Omega)}
 &\leq |1-a_{n_k}| \; \| h^{-}(y_{n_k}) \|_{X^{+}} \\
 &\leq \mu |1-a_{n_k}| \; \|y_{n_k} \|_{X^{-}} \\
 &\leq \mu M_1|1-a_{n_k}| \;  \| y_{n_k} \|_{L^2(\Omega)} \\
 &= \frac{\mu_0}{M_1 M_2} M_1 
     \frac{b \zeta_{n_k} \|y_{n_k}\|_{L^2(\Omega)} }{\|y_{n_k}\|_{L^2(\Omega)} + \|h^{-}(y_{n_k})\|_{L^2(\Omega)}}  \\
 &\leq \mu_0 b \zeta_{n_k}. 
 \end{aligned}
 \end{equation}
 Since $\|z_{n_k}\|_{L^2(\Omega)} \geq (\|y_{n_k}\|_{L^2(\Omega)} + \|h^{-}(y_{n_k})\|_{L^2(\Omega)})
 /(\|P^{+}\| + \|P^{-}\|)$, it follows that
 \begin{displaymath}
  \frac{b \zeta_{n_k} \|z_{n_k}\|_{L^2(\Omega)}}{\|y_{n_k}\|_{L^2(\Omega)} + \|h^{-}(y_{n_k})\|_{L^2(\Omega)}}
  \geq \frac{b \zeta_{n_k}}{\|P^{+}\| + \|P^{-}\|}.
 \end{displaymath}
 Hence, if $\| z_{n_k}\|_{L^2(\Omega)} \geq \delta$, then
\begin{equation}
\label{eq:ApproxUnk3}  
\begin{aligned} 
   \|a_{n_k} z_{n_k}\|_{L^2(\Omega)} 
   &= \left ( 1 - \frac{b \zeta_{n_k}}{\|y_{n_k}\|_{L^2(\Omega)} + \|h^{-}(y_{n_k})\|_{L^2(\Omega)}} \right )
      \|z_{n_k}\|_{L^2(\Omega)} \\
   &= \|z_{n_k}\|_{L^2(\Omega)} 
      - \frac{b \zeta_{n_k} \|z_{n_k}\|_{L^2(\Omega)}}{\|y_{n_k}\|_{L^2(\Omega)} + \|h^{-}(y_{n_k})\|_{L^2(\Omega)}} \\
   &\leq  \|z_{n_k}\|_{L^2(\Omega)}   - \frac{b \zeta_{n_k}}{\|P^{+}\|+\|P^{-}\|} \\
   &< \delta + \zeta_{n_k} - \frac{b \zeta_{n_k}}{\|P^{+}\|+\|P^{-}\|}.
\end{aligned}
\end{equation}
Therefore, by \eqref{eq:ApproxNormUnk} -- \eqref{eq:ApproxUnk3},  
if $\| z_{n_k}\|_{L^2(\Omega)} \geq \delta$, then
\begin{equation}
\label{eq:ApproxNormUnk2}
\begin{aligned}
  \|u_{n_k}\|_{L^2(\Omega)}  
   &< \mu_0 b \zeta_{n_k} + \mu_0 b \zeta_{n_k} 
    + \delta + \zeta_{n_k} - \frac{b \zeta_{n_k}}{\|P^{+}\|+\|P^{-}\|} \\
   &= \delta +  \Big ( 2\mu_0 b - \frac{b}{\|P^{+}\|+\|P^{-}\|}  +1 \Big ) \zeta_{n_k}.
\end{aligned} 
\end{equation}
By the choice of $b$ in \eqref{eq:constB}, we get
\begin{displaymath}
 \begin{aligned}
 2\mu_0 b - \frac{b}{\|P^{+}\|+\|P^{-}\|} +1
  &= - \Big ((\|P^{+}\|+\|P^{-}\|)^{-1} - 2 \mu_0 \Big )b +1\\
  &< -1 +1 \\
  &= 0.
 \end{aligned}
\end{displaymath}
It follows from \eqref{eq:ApproxNormUnk2} that $\|u_{n_k}\|_{L^2(\Omega)}  < \delta$ 
if $\| z_{n_k}\|_{L^2(\Omega)} \geq \delta$. Hence, we conclude that
$u_{n_k} \in \text{\upshape{graph}}(h^{-}) \cap B_{L^2(\Omega)}(0, \delta)$
for all $k \in \mathbb N$ and statement (i) follows.

Statement (ii) can be proved similarly. The only difference is that the sequence $z_n$
belongs to different spaces $L^2(\Omega_n)$ for each $n \in \mathbb
N$. We only need to adjust the proof in part (i) and keep track of the
dependence on $n$. In particular,  we replace \eqref{eq:constB} by
\begin{displaymath}
  b > \sup_{n \in \mathbb N} \left \{ \frac{1}{(\|P_n^{+}\| + \|P_n^{-}\|)^{-1} -2 \mu_0} \right \} >0
\end{displaymath}  
and \eqref{eq:constAnk} by
\begin{displaymath}
 a_{n_k} := 1 - \frac{b \zeta_{n_k}}{\|y_{n_k}\|_{L^2(\Omega_{n_k})} + \|h_{n_k}^{-}(y_{n_k})\|_{L^2(\Omega_{n_k})}}, 
\end{displaymath}
for each $k \in \mathbb N$.
\end{proof}
We now show the upper semicontinuity of local stable invariant manifolds.

\begin{proof}[Proof of Theorem \ref{th:mainResults2} (i)]
 By Lemma \ref{lem:upperSemiContCharac}, we need to show that 
 for any sequence $\{\xi_n\}_{n \in \mathbb N}$ with 
 $\xi_n \in \text{\upshape{graph}}(h_n^{-})\cap B_n$, if $\{\xi_{n_k} \}_{k \in \mathbb N}$ is
 a subsequence then there exist a further subsequence (denoted again by $\xi_{n_k}$)
 and  a sequence $\{u_{n_k}\}_{k \in \mathbb N}$ with $u_{n_k} \in \text{\upshape{graph}}(h^{-})\cap B$ 
 such that $\| \xi_{n_k} -u_{n_k} \|_{L^2(D)} \rightarrow 0$  as $k \rightarrow
 \infty$.  

 Let $\{\xi_n\}_{n \in \mathbb N}$ be a sequence with $\xi_n \in
 \text{\upshape{graph}}(h_n^{-})\cap B_n$ and $(\xi_{n_k})_{k \in \mathbb N}$ 
 be an arbitrary subsequence. We write $\xi_{n_k} := v_{n_k} \oplus
 h_{n_k}^{-}(v_{n_k})$ for some $v_{n_k} \in X_{n_k}^{-}$. Since
 $\|\xi_{n_k}\|_{L^2(\Omega_{n_k})} = \| v_{n_k} \oplus h_{n_k}^{-}(v_{n_k}) \|_{L^2(\Omega_{n_k})} < \delta$
 for all $k \in \mathbb N$, we can extract a subsequence of $v_{n_k}$
 (indexed again by $n_k$) such that 
\begin{equation} 
\label{eq:weakConvVnk}
     v_{n_k} \rightharpoonup v
\end{equation}
in $L^2(D)$ as $k \rightarrow \infty$. By the assumption that $|\Omega_n|
\rightarrow |\Omega|$, we conclude that $v =0$ almost everywhere in $D \backslash \Omega$, that is,
$v \in L^2(\Omega)$. 
Moreover, by the convergence of
$P_n^{-} \rightarrow P^{-}$ in $\mathscr L(L^2(D))$ (see Remark \ref{remark:NonConvPminus}) and
the weak convergence of $v_{n_k}$, 
it is easy to see that
$v_{n_k} \rightharpoonup P^{-} v$ in $L^2(D)$ as $k
\rightarrow \infty$. By the uniqueness of weak limit, $v = P^{-} v$
and hence $v \in X^{-}$.
Since $\|h_{n_k}^{-}(v_{n_k})\|_{L^2(D)}$ is uniformly bounded, we
can apply Corollary \ref{cor:convBoundSeqInXplus} to extract a further
subsequence (indexed again by $n_k$) such that 
\begin{equation}
\label{eq:strongConvHVnk}
     h_{n_k}^{-}(v_{n_k}) \rightarrow  w
\end{equation}
in $L^2(D)$ as $k \rightarrow \infty$ with the limit $w \in X^{+}$.
Thus, we get
\begin{equation}
 \label{eq:convStabVW}
    v_{n_k} \oplus h_{n_k}^{-}(v_{n_k}) \rightharpoonup  v \oplus w
\end{equation}
in $L^2(D)$ as $k \rightarrow \infty$. By a standard property of weak convergence,
\begin{equation}
 \label{eq:weakNormVW}
 \begin{aligned}
 \| v \oplus w \|_{L^2(D)} 
  \leq \liminf_{k \rightarrow \infty} \| v_{n_k} \oplus h_{n_k}^{-}(v_{n_k}) \|_{L^2(D)} 
  \leq \delta.
\end{aligned}
\end{equation}
Hence, $u:= v \oplus w$ belongs to $\overline{B}$. Applying \eqref{eq:convSolUsingSemiflow}, 
we get from \eqref{eq:convStabVW} and globally Lipschitz assumption for the modified function 
$\tilde f$ that
    $\Phi_{t,n_k} (v_{n_k} \oplus h_{n_k}^{-}(v_{n_k}))
\rightarrow   \Phi_{t} (v \oplus w) $
in $L^2(D)$ as $k \rightarrow \infty$ for all $t > 0$.
Lemma \ref{lem:convUsUcUu}  implies that
\begin{displaymath}
\begin{aligned}
     P_{n_k}^{-} \Phi_{t,n_k} (v_{n_k} \oplus h_{n_k}^{-}(v_{n_k})), 
   &\rightarrow   P^{-} \Phi_{t} (v \oplus w) \\
    P_{n_k}^{+} \Phi_{t,n_k} (v_{n_k} \oplus h_{n_k}^{-}(v_{n_k})) 
   &\rightarrow  P^{+} \Phi_{t} (v \oplus w) 
\end{aligned}
\end{displaymath}
in $L^2(D)$ as $k \rightarrow \infty$ for all $t > 0$.
By the construction of  $h_{n_k}^{-}(v_{n_k})$ 
(see Theorem \ref{th:globalStaMan}), we have that
\begin{displaymath}
  \| P_{n_k}^{+} \Phi_{t,n_k} (v_{n_k} \oplus h_{n_k}^{-}(v_{n_k})) \|_{X_{n_k}^{+}}
  \leq \mu \|P_{n_k}^{-} \Phi_{t,n_k} (v_{n_k} \oplus h_{n_k}^{-}(v_{n_k}))\|_{X_{n_k}^{-}},
\end{displaymath}
for all $t \geq 0$.  
The above implies
\begin{displaymath}
  \| P_{n_k}^{+} \Phi_{t,n_k} (v_{n_k} \oplus h_{n_k}^{-}(v_{n_k})) \|_{L^2(\Omega_{n_k})}
  \leq \mu M_1 \|P_{n_k}^{-} \Phi_{t,n_k} (v_{n_k} \oplus h_{n_k}^{-}(v_{n_k}))\|_{L^2(\Omega_{n_k})},
\end{displaymath}
for all $t \geq 0$. Passing to the limit as $k \rightarrow \infty$, we
obtain
\begin{displaymath}
  \| P^{+} \Phi_{t} (v \oplus w) \|_{L^2(\Omega)}
  \leq \mu M_1 \|P^{-} \Phi_{t} (v \oplus w)\|_{L^2(\Omega)}
\end{displaymath}
for all $t > 0$. By the assumptions on $\mu_0$ and $\mu$ in \eqref{eq:mu0} and \eqref{eq:mu},
and the equivalence of norms on $X^{-}$ and $X^{+}$, 
it follows that
\begin{equation} 
 \label{eq:coneCondPosT}
  \| P^{+} \Phi_{t} (v \oplus w) \|_{X^{+}}
  \leq \mu M_1 M_2 \|P^{-} \Phi_{t} (v \oplus w)\|_{X^{-}}
   = \mu_0 \|P^{-} \Phi_{t} (v \oplus w)\|_{X^{-}},
\end{equation}
for all $t > 0$. We claim that $\|w\|_{X^{+}} \leq \mu_0 \|v\|_{X^{-}}$.
If $\|w\|_{X^{+}} > \mu_0 \|v\|_{X^{-}}$, that is $v \oplus w$ is in the
interior of the cone $K_{\mu_0}$ defined by \eqref{eq:conKlambda}, 
we can find a product neighbourhood
$U(v,w)$ of $v \oplus w$ such that $U(v,w) \subset
\text{Int}(K_{\mu_0})$. Since the solution of parabolic equation with
the initial condition $v \oplus w$ is continuous, there exists $t_0 > 0$
such that $\Phi_t(v \oplus w) \in U(v,w)$ for $0 \leq t \leq t_0$. This
implies that  
$ \| P^{+} \Phi_{t} (v \oplus w) \|_{X^{+}} > \mu_0 \|P^{-} \Phi_{t} (v
\oplus w)\|_{X^{-}}$ 
for $0 \leq t \leq  t_0$, which is a contradiction to
\eqref{eq:coneCondPosT}.  Hence, by the definition of $\hat{h}^{-}$ (a
modification with the cone $K_{\mu_0}$), we conclude that $w =
\hat{h}^{-}(v)$. As both modification agree on $\bar{B}$, we have $w = h^{-}(v)$.
Therefore, \eqref{eq:strongConvHVnk} implies
\begin{equation}
\label{eq:strongConvToHv}
    h_{n_k}^{-}(v_{n_k}) \rightarrow  h^{-}(v)
\end{equation}
in $L^2(D)$ as $k \rightarrow \infty$.

The remainder of this proof deals with the existence of the required sequence
$u_{n_k} \in \text{graph}(h^{-}) \cap B$. At this stage, we keep the
index of our subsequence as in the previous part. We define
 $y_{n_k} := P^{-} v_{n_k}|_{\Omega} \in X^{-}$
for each $k \in \mathbb N$.
By the convergence 
$P_n^{-} \rightarrow P^{-}$  in $\mathscr L(L^2(D))$ (from Remark \ref{remark:NonConvPminus})
and the boundedness of $\|v_{n_k}\|_{L^2(D)}$, we get 
\begin{equation}
\label{eq:ynkBounded}
\begin{aligned}
\| y_{n_k} - v_{n_k} \|_{L^2(D)}
 \leq \| P^{-}  - P_{n_k}^{-} \| \| v_{n_k}\|_{L^2(D)} 
 \rightarrow 0
\end{aligned}
\end{equation} 
as $k \rightarrow \infty$. In particular, $\|y_{n_k}\|_{L^2(\Omega)}$ is
uniformly bounded.  Moreover, by \eqref{eq:ynkBounded} and \eqref{eq:weakConvVnk}, we get 
\begin{equation}
 \label{eq:weakConvYnk}
      y_{n_k} \rightharpoonup v
\end{equation}
 in $L^2(D)$ as $k \rightarrow \infty$. 
By the Lipschitz continuity of $h^{-}$, 
 $\| h^{-}(y_{n_k}) \|_{L^2(\Omega)}$ is uniformly bounded.
Since $X^{+}$ is a finite dimensional space,
we can extract a further subsequence (indexed again by $n_k$) such that 
\begin{equation}
 \label{eq:strongConvHynk}
   h^{-}(y_{n_k}) \rightarrow  \tilde w
\end{equation}
in $L^2(D)$ as $k \rightarrow \infty$ with the limit $\tilde w \in
X^{+}$. Therefore,
   $y_{n_k} \oplus h^{-}(y_{n_k}) \rightharpoonup  v \oplus \tilde w$
in $L^2(D)$ as $k \rightarrow \infty$.
By 
\eqref{eq:convSolUsingSemiflow} (with $\Omega_n = \Omega$
for all $n \in \mathbb N$), 
it follows that
   $\Phi_{t} (y_{n_k} \oplus h^{-}(y_{n_k})) \rightarrow  \Phi_{t} (v \oplus \tilde w)$ 
in $L^2(D)$ as $k \rightarrow \infty$ for all $t > 0$.
Hence,
\begin{displaymath}
\begin{aligned}
    P^{-} \Phi_{t} (y_{n_k} \oplus h^{-}(y_{n_k})) 
   &\rightarrow   P^{-} \Phi_{t} (v \oplus \tilde w), \\ 
    P^{+} \Phi_{t} (y_{n_k} \oplus h^{-}(y_{n_k})) 
   &\rightarrow   P^{+} \Phi_{t} (v \oplus \tilde w) 
\end{aligned}
\end{displaymath}
in $L^2(\Omega)$ as $k \rightarrow \infty$ for all $t > 0$.
Since these sequences are in the fixed spaces $X^{-}$ and $X^{+}$
respectively, \eqref{eq:equivNormXminusplus}
implies that they
converge under $\|\cdot\|_{X^{-}}$ and $\| \cdot \|_{X^{+}}$, respectively.
%
%
%
%
%
%
%
By the construction of  $h^{-}(y_{n_k})$ (see Theorem \ref{th:globalStaMan}), we have that
\begin{displaymath}
  \| P^{+} \Phi_{t} (y_{n_k} \oplus h^{-}(y_{n_k})) \|_{X^{+}}
  \leq \mu \|P^{-} \Phi_{t} (y_{n_k} \oplus h^{-}(y_{n_k}))\|_{X^{-}},
\end{displaymath}
for all $t \geq 0$. 
Passing to the limit as $k \rightarrow \infty$, we obtain
\begin{equation}
 \label{eq:coneCondPosTWtilde}
  \| P^{+} \Phi_{t} (v \oplus \tilde w) \|_{X^{+}}
   = \mu \|P^{-} \Phi_{t} (v \oplus \tilde w)\|_{X^{-}}
\end{equation}
for all $t > 0$. 
By a similar argument appeared after \eqref{eq:coneCondPosT}, 
we conclude that $\|\tilde w\|_{X^{+}} \leq \mu \|v\|_{X^{-}}$.
Hence, $\tilde w$ agrees with $w = h^{-}(v)$.
Therefore, \eqref{eq:strongConvHynk} implies
\begin{equation}
\label{eq:strongConvToHynk}
   h^{-}(y_{n_k}) \rightarrow  h^{-}(v)
\end{equation}
in $L^2(D)$ as $k \rightarrow \infty$. 
Recall that $\xi_{n_k} = v_{n_k} \oplus h_{n_k}^{-}(v_{n_k})$.
If we set $z_{n_k} := y_{n_k} \oplus h^{-}(y_{n_k}) \in \text{graph}(h^{-})$,
then by \eqref{eq:strongConvToHv}, \eqref{eq:ynkBounded} and \eqref{eq:strongConvToHynk}, we get
\begin{equation}
 \label{eq:convXiZ}
 \begin{aligned}
 &\| \xi_{n_k} - z_{n_k} \|_{L^2(D)} \\
 &\quad = \| (v_{n_k} \oplus h_{n_k}^{-}(v_{n_k})) 
          - (y_{n_k} \oplus h^{-}(y_{n_k})) \|_{L^2(D)} \\
 &\quad \leq \| v_{n_k} - y_{n_k} \|_{L^2(D)} 
          + \|  h_{n_k}^{-}(v_{n_k}) -  h^{-}(y_{n_k})\|_{L^2(D)} \\
 &\quad \leq \| v_{n_k} -  y_{n_k} \|_{L^2(D)} 
          + \| h_{n_k}^{-}(v_{n_k}) -  h^{-}(v)\|_{L^2(D)} \\
 &\quad \quad  + \| h^{-}(v) -  h^{-}(y_{n_k})\|_{L^2(D)} \\
 &\quad \rightarrow 0
  \end{aligned}
\end{equation}
as $k \rightarrow \infty$. Therefore, we can extract a further subsequence
(indexed again by $n_k$) and $\zeta_{n_k} > 0$ with $\zeta_{n_k} \rightarrow 0$ 
as $k \rightarrow \infty$ such that
  $\|  \xi_{n_k} - z_{n_k} \|_{L^2(D)} < \zeta_{n_k}$
for all $k \in \mathbb N$. It follows that
\begin{displaymath}
 \| z_{n_k} \|_{L^2(\Omega)} \leq \| \xi_{n_k} \|_{L^2(\Omega_{n_k})} + \zeta_{n_k}
                          < \delta + \zeta_{n_k},
\end{displaymath}
for all $k \in \mathbb N$, that is, $z_{n_k} \in \text{graph}(h^{-}) 
\cap B_{L^2(\Omega)}(0, \delta+ \zeta_{n_k})$ for all $k \in \mathbb N$.
We can apply Lemma \ref{lem:seqOfGphSmallerBall} (i) to obtain a subsequence 
(indexed again by $n_k$) $z_{n_k}$ and a sequence $u_{n_k} \in \text{graph}(h^{-}) \cap B$
such that $\| z_{n_k} - u_{n_k}\|_{L^2(\Omega)} \rightarrow 0$ as 
$k \rightarrow \infty$.
It follows from \eqref{eq:convXiZ} that
\begin{displaymath}
 \| \xi_{n_k} -  u_{n_k} \|_{L^2(D)}
 \leq \| \xi_{n_k} - z_{n_k} \|_{L^2(D)} + \| z_{n_k} - u_{n_k} \|_{L^2(D)} 
 \rightarrow 0
\end{displaymath}
as $k \rightarrow \infty$. 
Hence, we obtain the required sequence $u_{n_k}$.
Since we start with an arbitrary sequence $\xi_n \in \text{\upshape{graph}}(h_n^{-})\cap B_n$,
the assertion of Theorem \ref{th:mainResults2} (i) follows.  
\end{proof}
The lower semicontinuity of local stable invariant manifolds can be obtained
by a similar fashion.
\begin{proof}[Proof of Theorem \ref{th:mainResults2} (ii)]
The statement follows by a similar argument to the proof of Theorem \ref{th:mainResults2} (i).
We use Lemma \ref{lem:lowerSemiContCharac} and Lemma
\ref{lem:seqOfGphSmallerBall} (ii) instead of Lemma
\ref{lem:upperSemiContCharac} and \ref{lem:seqOfGphSmallerBall} (i). 
\end{proof}

\subsection*{Acknowledgment}
The author would like to thank  D. Daners and E. N. Dancer for helpful discussions
and suggestions.



\bibliography{database}{}
\bibliographystyle{abbrv}

\end{document}